\documentclass[12pt]{amsart}
\usepackage[shortlabels]{enumitem}
\usepackage[margin=2.6cm]{geometry}
\usepackage{amssymb,amsopn,amscd,amsbsy}
\usepackage[usenames,dvipsnames]{xcolor}
\usepackage{graphicx}
\usepackage{xspace,titletoc}
\usepackage{algpseudocode}
\usepackage{mathrsfs}
\usepackage{multirow}

\usepackage[utf8]{inputenc}
\allowdisplaybreaks

\usepackage[usenames,dvipsnames]{xcolor}
\definecolor{darkblue}{rgb}{0.0, 0.0, 0.55}
\definecolor{bordeaux}{rgb}{0.34, 0.01, 0.1}

\usepackage[pagebackref,colorlinks,linkcolor=bordeaux,citecolor=darkblue,urlcolor=black,hypertexnames=true]{hyperref}

\newtheorem{theorem}{Theorem}[section]
\newtheorem{corollary}[theorem]{Corollary}
\newtheorem{lemma}[theorem]{Lemma}{\rm}
\newtheorem{proposition}[theorem]{Proposition}
\newtheorem{thmA}{Theorem}

\theoremstyle{definition}
\newtheorem{definition}[theorem]{Definition}{\rm}
\newtheorem{remark}[theorem]{Remark}
\newtheorem{example}[theorem]{Example}

\numberwithin{equation}{section}

\DeclareMathOperator{\Sym}{Sym}
\DeclareMathOperator{\diag}{diag}

\DeclareMathOperator{\II}{II}

\DeclareMathOperator{\tr}{tr}

\DeclareMathOperator{\rank}{rank}

\DeclareMathOperator{\cs}{cs}
\DeclareMathOperator{\re}{re}
\DeclareMathOperator{\im}{im}
\newif\ifcomment
\commentfalse
\commenttrue

\makeatletter
\def\pmb@@#1#2#3{\leavevmode\setboxz@h{#3}%
\dimen@-\wdz@
\kern-.5\ex@\copy\z@
\kern\dimen@\kern.25\ex@\raise.4\ex@\copy\z@
\kern\dimen@\kern.2\ex@\raise.3\ex@\copy\z@
\kern\dimen@\kern.15\ex@\raise.2\ex@\copy\z@
\kern\dimen@\kern.25\ex@\box\z@
}
\makeatother

\renewcommand{\subset}{\subseteq}

\linespread{1.08}

\begin{document}
\def\cA{\mathcal A}
\def\cH{\mathcal H}
\def\cK{\mathcal K}
\def\cU{\mathcal U}
\def\red{\color{red}}
\def\bl{\color{blue}}
\def\ora{\color{orange}}
\def\green{\color{green}}
\def\br{\color{brown}}
\newcommand\state{state\xspace}
\newcommand\ncstate{nc state\xspace}
\newcommand{\ncsostools}{\mathtt{NCSOStools}}
\newcommand{\ncpoltosdpa}{\mathtt{Ncpol2sdpa}}
\newcommand{\gloptipoly}{\mathtt{Gloptipoly}}
\newcommand{\twoone}{\II_1}
\def\la{\langle}
\def\ra{\rangle}
\def\e{{\rm e}}
\def\x{\mathbf{x}}
\def\by{\mathbf{y}}
\def\bz{\mathbf{z}}
\def\cC{\mathcal{C}}
\def\R{\mathbb{R}}
\def\O{\operatorname{O}}
\def\Mbb{\mathbb{M}}
\def\Sbb{\mathbb{S}}
\newcommand*{\sbb}[1]{\operatorname{S}_{#1}(\R)}
\def\T{\mathbb{T}}
\def\N{\mathbb{N}}
\def\K{\mathbb{K}}
\def\bK{\overline{\mathbf{K}}}
\def\Q{\mathbf{Q}}
\def\M{\mathbf{H}}
\def\O{\mathbf{O}}
\def\C{\mathbb{C}}
\def\Hbb{\mathbb{H}}
\def\P{\mathbf{P}}
\def\Z{\mathbb{Z}}
\def\A{\mathbf{A}}
\def\W{\mathbf{W}}
\def\bfone{\mathbf{1}}
\def\V{\mathbf{V}}
\def\AA{\overline{\mathbf{A}}}
\def\c{\mathbf{C}}
\def\bL{\mathbf{L}}
\def\bS{\mathbf{S}}
\def\Y{\mathbf{Y}}
\def\X{\mathbf{X}}
\def\G{\mathbf{G}}
\def\Bbb{\mathbb{B}}
\def\Dbb{\mathbb{D}}
\def\f{\mathbf{f}}
\def\z{\mathbf{z}}
\def\y{\mathbf{y}}
\def\d{\hat{d}}
\def\bx{\mathbf{x}}
\def\y{\mathbf{y}}
\def\h{\mathbf{h}}
\def\u{\mathbf{u}}
\def\g{\mathbf{g}}
\def\w{\mathbf{w}}
\def\cX{\mathcal{X}}
\def\a{\mathbf{a}}
\def\q{\mathbf{q}}
\def\u{\mathbf{u}}
\def\vb{\mathbf{v}}
\def\s{\mathcal{S}}
\def\cD{\mathcal{D}}
\def\co{{\rm co}\,}
\def\cp{{\rm CP}}
\def\fatT{\pmb{\mathscr{T}}}
\def\skinnyT{\mathscr{T}}
\def\fatS{\pmb{\mathscr{S}}}
\def\skinnyS{\mathscr{S}}
\def\tg{\tilde{f}}
\def\tx{\tilde{\x}}
\def\supmu{{\rm supp}\,\mu}
\def\supnu{{\rm supp}\,\nu}
\def\m{\mathcal{M}}
\def\bR{\mathbf{R}}
\def\om{\mathbf{\Omega}}
\def\c{\mathbf{c}}
\def\s{\mathcal{S}}
\def\k{\mathcal{K}}
\def\la{\langle}
\def\ra{\rangle}
\def\sig{\varsigma}

\newcommand{\bra}[1]{\mathinner{\langle #1|}}
\newcommand{\ket}[1]{\mathinner{|#1\rangle}}
\newcommand{\braket}[2]{\mathinner{\langle #1|#2\rangle}}
\newcommand{\dyad}[1]{| #1\rangle \langle #1|}

\def\cP{\mathcal{P}}
\def\cM{\mathcal{M}}
\def\QM{\operatorname{QM}}
\def\cQ{\operatorname{QM}_\sig}
\def\cN{\mathcal{N}}
\def\cF{\mathcal{F}}
\def\cE{\mathcal{E}}
\def\cB{\mathcal{B}}
\def\cS{\mathcal{S}}

\def\smileL{\overset{\smallsmile}{L}}
\def\blambda{{\boldsymbol{\lambda}}}
\def\bsigma{{\Delta}}
\def\RX{\R \langle \underline{x} \rangle}
\def\CX{\C \langle \underline{x} \rangle}
\def\RXk{\R \langle \underline{x}(I_k) \rangle}
\def\RXonetwo{\R \langle \underline{x}(I_1 \cap I_2) \rangle}
\def\CX{\C \langle \underline{x} \rangle}
\def\KX{\K \langle \underline{x} \rangle}
\def\uX{\underline X}
\def\uA{\underline A}
\def\uB{\underline B}
\def\ux{\underline x}
\def\mx{\langle\underline x\rangle}
\def\uY{\underline Y}
\def\uy{\underline y}
\def\SymS{\Sym \fatS}
\def\ov{\overline{o}}
\def\und{\underline{o}}
\newcommand{\victor}[1]{\Vi{#1}}
\newcommand{\victorshort}[1]{\todo[inline,color=purple!30]{VM: #1}}
\newcommand{\igor}[1]{\Ig{#1}}
\newcommand{\revision}[1]{{{\color{blue}#1}}}

\makeatletter
\providecommand{\leftsquigarrow}{%
  \mathrel{\mathpalette\reflect@squig\relax}%
}
\newcommand{\reflect@squig}[2]{%
  \reflectbox{$\m@th#1\rightsquigarrow$}%
}
\makeatother

\makeatletter
\newcommand{\mycontentsbox}{%
\printindex
{\centerline{NOT FOR PUBLICATION}
\addtolength{\parskip}{-2.0pt}\normalsize
\tableofcontents}}
\def\enddoc@text{\ifx\@empty\@translators \else\@settranslators\fi
\ifx\@empty\addresses \else\@setaddresses\fi
\newpage\mycontentsbox
}
\makeatother

\colorlet{commentcolour}{green!50!black}
\newcommand{\comment}[3]{%
\ifcomment%
	{\color{#1}\bfseries\sffamily(#3)%
	}%
	\marginpar{\textcolor{#1}{\hspace{3em}\bfseries\sffamily #2}}%
	\else%
	\fi%
}
\newcommand{\Ig}[1]{
	\comment{magenta}{I}{#1}
}
\newcommand{\Vi}[1]{
	\comment{blue}{V}{#1}
}
\newcommand{\Jie}[1]{
	\comment{green}{J}{#1}
}
\newcommand{\idea}[1]{\textcolor{red}{#1(?)}}

\newcommand{\Expl}[1]{
	{\tag*{\text{\small{\color{commentcolour}#1}}}%
	}
}

\renewcommand{\algorithmicrequire}{\textbf{Input:}}
\renewcommand{\algorithmicensure}{\textbf{Output:}}

\linespread{1.2}

\title[State polynomials]{State polynomials: positivity, optimization and nonlinear Bell inequalities}

\author{Igor Klep \and Victor Magron \and Jurij Vol\v{c}i\v{c} \and Jie Wang}

\address{Igor Klep: Faculty of Mathematics and Physics, Department of Mathematics,  University of Ljubljana \& Institute of Mathematics, Physics and Mechanics, Ljubljana, Slovenia}
\email{igor.klep@fmf.uni-lj.si}
\thanks{IK was supported by the 
Slovenian Research Agency program P1-0222 and grants 
J1-2453, N1-0217, J1-3004.}
\address{Victor Magron: LAAS-CNRS \& Institute of Mathematics from Toulouse, France}
\email{vmagron@laas.fr}
\thanks{VM was supported by the EPOQCS grant funded by the LabEx CIMI (ANR-11-LABX-0040), the FastQI grant funded by the Institut Quantique Occitan, the PHC Proteus grant
46195TA, the European Union’s Horizon 2020 research and innovation programme under the Marie Sk{\l}odowska-Curie Actions, grant agreement 813211 (POEMA), by the AI Interdisciplinary Institute ANITI funding, through the French ``Investing for the Future PIA3'' program under the Grant agreement n${}^\circ$ ANR-19-PI3A-0004 as well as by the National Research Foundation, Prime Minister’s Office, Singapore under its Campus for Research Excellence and Technological Enterprise (CREATE) programme.}
\address{Jurij Vol\v{c}i\v{c}: Department of Mathematics, Drexel University, Pennsylvania}
\email{jurij.volcic@drexel.edu}
\thanks{JV was supported by the NSF grant DMS-1954709.}
\address{Jie Wang: Academy of Mathematics and Systems Science, Chinese Academy of Sciences, Beijing, China}
\email{wangjie212@amss.ac.cn}
\thanks{JW was supported by the NSFC grant 12201618.}

\date{}

\begin{abstract}
This paper introduces state polynomials, i.e., polynomials in noncommuting variables and formal states of their products. A state analog of Artin's solution to Hilbert's 17th problem is proved showing that state polynomials, positive over all matrices and matricial states,
are sums of squares with denominators. Somewhat surprisingly, it is also established that a Krivine-Stengle Positivstellensatz fails to hold in the state polynomial setting. Further, archimedean Positivstellens\"atze in the spirit of Putinar and Helton-McCullough are presented leading to a hierarchy of semidefinite relaxations converging monotonically to the optimum of a state polynomial subject to state constraints. This hierarchy can be seen as a state analog of the Lasserre hierarchy for optimization of polynomials, and the Navascu\'es-Pironio-Ac\'in scheme for optimization of noncommutative polynomials. 
The motivation behind this theory arises from the study of correlations in quantum networks.
Determining the maximal quantum violation of a polynomial Bell inequality for an arbitrary network is reformulated as a state polynomial optimization problem. Several examples of quadratic Bell inequalities in the bipartite and the bilocal tripartite scenario are analyzed.
To reduce the size of the constructed SDPs, 
sparsity, sign symmetry and conditional expectation of the observables' group structure are exploited.
To obtain the above-mentioned results, techniques from noncommutative algebra, real algebraic geometry, operator theory, and convex optimization are employed.
\end{abstract}

\keywords{Noncommutative polynomial, state polynomial, Hilbert’s 17th problem, Positivstellensatz, state optimization, semidefinite programming, network scenario, polynomial Bell inequality}

\subjclass[2020]{13J30, 46L30, 90C22, 46N50, 81-08}

\maketitle

\newpage

\section{Introduction}
\label{sec:intro}

This paper introduces the class of \emph{(noncommutative) state polynomials}, i.e., polynomials in noncommutative (nc)  variables, such as matrices or operators, and formal states of their products. 
Such polynomials are naturally evaluated 
over finite or infinite-dimensional Hilbert spaces $\cH$
by replacing
each variable by a bounded operator on $\cH$, and picking a state, i.e., a positive unital linear functional on the set of bounded operators $\cB(\cH)$.
The aim of the paper is to study positivity and optimization of state polynomials, and develop corresponding algebraic positivity certificates and associated algorithms.
The main motivation for studying state polynomials arises from quantum information theory, in particular nonlinear Bell inequalities \cite{Chaves,PHBB} for correlations in quantum networks \cite{Fri,pozas2019bounding,tavakoli22}. 
Namely, it turns out that computing the maximum quantum violation of a polynomial Bell inequality in the standard Bell scenario corresponds to optimizing a state polynomial under nc (in)equality constraints; that is, constraints only involve nc variables and not the state. 
For more general quantum networks, polynomial Bell inequalities correspond to state polynomial optimization problems subject to both nc and state (in)equalities. 

In the free nc context, i.e., in the absence of states, several representation results for positive polynomials (or Positivstellens\"atze) have been derived, allowing one to perform optimization.
One of the central results from Helton and McCullough independently \cite{Helton02,McCullSOS} asserts that all positive semidefinite polynomials are \textit{sums of hermitian squares} (SOHS). 
This in turn allows one to minimize the eigenvalue of an nc polynomial.
One can also minimize the eigenvalue of an nc polynomial subject to a finite number of nc polynomial inequality constraints, i.e., over a basic nc semialgebraic set. 
More precisely, a non-decreasing sequence of lower bounds of the minimal eigenvalue can be obtained, each bound corresponding to the solution of a semidefinite program (SDP)\footnote{That is, the optimum of a linear function subject to linear matrix inequality (LMI) constraints.}.
Thanks to the Helton-McCullough representation theorem \cite{Helton04}, the corresponding hierarchy of lower bounds converges to the minimal eigenvalue if the quadratic module generated by the polynomials describing the basic nc semialgebraic set is archimedean.
This framework is the nc variant of the nowadays famous Lasserre's hierarchy \cite{Las01sos} for \emph{commutative} polynomial optimization, based on the representation by Putinar \cite{Putinar1993positive} of positive polynomials over basic closed semialgebraic sets. 
Hierarchies of semidefinite programs have been applied and generalized to different nc optimization problems  \cite{Helton04,navascues2008convergent,pironio2010convergent,nctrace}. 
In the seminal paper \cite{navascues2008convergent},  Navascu\'es, Pironio and Ac\'in (NPA) provide such a hierarchy  to bound the maximal violation levels of linear Bell inequalities after casting the initial quantum information problem as an eigenvalue maximization problem;
cf.~\cite{doherty2008quantum}.
Extensions to trace minimization of nc polynomials have been derived in \cite{nctrace}.
More recently, several hierarchies have been derived in   \cite{Gribling19,gribling2022bounding} to provide lower bounds for various matrix factorization ranks. 
These hierarchies have been concretely implemented in the Matlab library {\tt NCSOStools} \cite{burgdorf16} and the Julia library {\tt TSSOS} \cite[Appendix~B]{sparsebook}.\looseness=-1

Recent efforts significantly extend these frameworks to the case of optimization problems involving \emph{trace} polynomials, i.e.,  polynomials in nc variables and traces of their products.
In \cite{klep2018positive}, the first and thirds authors focused on trace polynomials being positive on semialgebraic sets of \emph{fixed size} matrices, and derived several Positivstellens\"atze, 
including
a Putinar-type Positivstellensatz stating that any positive trace polynomial admits a weighted SOHS decomposition without denominators.
In \cite{KMV}, the first, second and third authors generalized the above framework to the free setting,  by providing a Putinar-type Positivstellensatz for trace polynomials which are positive on tracial semialgebraic sets, where the evaluations are performed on von Neumann algebras.
This latter framework was applied in \cite{huber2022dimension} to detect entanglement of Werner state witnesses in a dimension-free way. 
In the univariate case, a tracial analog of Artin's solution to Hilbert's 17th problem was provided in \cite{KPV}, where it is proved that a positive semidefinite univariate trace polynomial is a quotient of sums of products of squares and traces of squares of trace polynomials. 
In the multivariate unconstrained setting, it is shown in \cite{KSV} that trace-positive nc polynomials can be 
``weakly'' approximated by SOHS and commutators of regular nc rational functions.

From the point of view of quantum information, the trace polynomial optimization framework from \cite{KMV} allows us to obtain bounds on violation levels of nonlinear Bell inequalities corresponding to \emph{maximally} entangled states. 
In this paper we rely on state polynomial optimization that is less restrictive, as it can provide violation bounds reached by (not necessary maximally) entangled states. 
From the point of view of operator theory, there is a correspondence between states on a Hilbert space $\cH$ and trace-class operators on $\cH$, but the reformulation of a state polynomial optimization problem into one with trace polynomials involves the \emph{non-normalized trace}, in which case there is no dimension-independent theory of positivity, 
necessitating the introduction of this new class of objects, i.e., (nc) state polynomials.

\subsection*{Contributions and main results}
A \emph{state polynomial} in nc variables $x_1,\dots,x_n$ is a real polynomial in formal state symbols $\sig(w)$, where $w$ is a word in $x_1,\dots,x_n$. 
More generally, an \emph{\ncstate polynomial} is a polynomial in $x_1,\dots,x_n$ and formal states of their words. 
For example, $f=\sig(x_1x_2x_1)-\sig(x_1)\sig(x_1x_2)$ is a state polynomial, 
and $h=\sig(x_1^2)x_2x_1+\sig(x_1)\sig(x_2x_1x_2)$ is an \ncstate polynomial. At a pair of bounded operators $\uX=(X_1,X_2)$
on Hilbert space $\cH$
 and a state $\lambda$ on $\cB(\cH)$, they are evaluated as $f(\lambda;\uX)=\lambda(X_1X_2X_1)-\lambda(X_1)\lambda(X_1X_2)$ and
$h(\lambda;\uX)=\lambda(X_1^2)X_2X_1+\lambda(X_1)\lambda(X_2X_1X_2)I$.

State polynomials form a commutative algebra denoted $\skinnyS$, and nc state polynomials form a  noncommutative algebra denoted $\fatS$. There is a canonical involution $\star$ on $\fatS$ that fixes $\skinnyS\cup\{x_1,\ldots,x_n\}$ element-wise, and an $\skinnyS$-linear map $\sig:\fatS\to\skinnyS$.

After establishing the algebraic framework for state polynomials in Section \ref{sec:prelim} and their function theoretic perspective in Section \ref{sec:freean}, we prove our first main result, the affirmative answer to a state polynomial analog of Hilbert's 17th problem from real algebraic geometry \cite{marshallbook,sche}.

\begin{thmA}[{Theorem \ref{t:h17}}]\label{thm:a}
Let $f$ be a state polynomial. 
Then $f(\lambda;\uX)\ge0$ for all matricial states $\lambda$ and tuples of symmetric matrices $\uX$ if and only if $f$ is a quotient of sums of products of elements of the form $\sig(hh^\star)$ for an nc state polynomial $h$.
\end{thmA}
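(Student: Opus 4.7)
The forward implication is immediate from the positivity of $\lambda$: for any $h\in\fatS$ and any matricial state $\lambda$ on $\cB(\cH)$ and symmetric tuple $\uX$, setting $H:=h(\lambda;\uX)\in\cB(\cH)$ gives $\sig(hh^\star)(\lambda;\uX)=\lambda(HH^\star)\ge0$. Sums and products of nonnegative quantities remain nonnegative, and any quotient $f=p/q$ with $p,q$ in the preordering generated by such elements is nonnegative wherever defined (equivalently, the identity $qf=p$ in $\skinnyS$ forces $f(\lambda;\uX)\ge0$ whenever $q(\lambda;\uX)\ne0$).

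For the converse I would mimic the commutative-algebra proof of Artin's theorem, applied to the commutative $\R$-algebra $\skinnyS$. Let $T\subseteq\skinnyS$ be the preordering generated as sums of products of $\{\sig(hh^\star):h\in\fatS\}$, and assume for contradiction that $f$ is nonnegative on all matricial evaluations but that no $q\in T$ satisfies $qf\in T$. By the standard Krull-type extension argument for preorderings (Zorn's lemma applied to preorderings of $\skinnyS$ containing $T\cup\{-f\}$), one obtains a prime ideal $\mathfrak{p}\triangleleft\skinnyS$ and an ordering $\le$ of the residue field $K:=\operatorname{Frac}(\skinnyS/\mathfrak{p})$ such that the image of $T$ lies in $K_{\ge0}$ while the image of $f$ is strictly negative. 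Real closing $K$ produces a real closed field $R\supseteq K$ and an $\R$-algebra homomorphism $\phi:\skinnyS\to R$ with $\phi(T)\subseteq R_{\ge0}$ and $\phi(f)<0$.

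The crucial and most technical step is to realize $\phi$ as evaluation at a genuine matricial state and symmetric matrix tuple over $\R$. For this I would perform a GNS-type construction on $\fatS$: the $R$-valued sesquilinear form $\la a,b\ra_\phi:=\phi(\sig(ab^\star))$ is positive semidefinite because $\la a,a\ra_\phi=\phi(\sig(aa^\star))\in\phi(T)\subseteq R_{\ge0}$, so after quotienting out its radical one obtains an $R$-inner-product space on which $x_1,\ldots,x_n$ act as symmetric operators $X_i$ via left multiplication, with cyclic vector $\xi:=\overline{1}$ recovering $\phi|_{\skinnyS}$ as the vector state $A\mapsto\la A\xi,\xi\ra_\phi$. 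Since $f$ involves only finitely many words in $\ux$ and state symbols, a finite-dimensional $X_i$-invariant subspace containing $\xi$ together with the words of $\ux$ appearing in $f$ yields a finite-dimensional $R$-matricial realization $(\lambda_R,\uX_R)$ with $f(\lambda_R;\uX_R)=\phi(f)<0$. The existence of such a finite-dimensional matricial realization for a finite list of ordered-field constraints is first-order in the language of ordered fields; by Tarski's transfer principle it descends from $R$ to $\R$, producing a matricial witness contradicting the hypothesis.

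The main obstacles I anticipate are (i) verifying that the finite-dimensional truncation of the GNS space still witnesses the inequality $\phi(f)<0$ together with the defining identities of $\skinnyS$ (unitality of the state, symmetry of $X_i$, compatibility of left multiplication with the involution $\star$), for which one must choose the invariant subspace carefully to contain a sufficient set of products of the $X_i$; and (ii) encoding ``matricial state and symmetric tuple over $R$'' as a first-order sentence amenable to Tarski transfer to $\R$. Both points should be handled by the function-theoretic framework for state polynomials developed in Section \ref{sec:freean}, which identifies state polynomial evaluations with free functions on pairs of matrix tuples and states and provides the density of matricial vector states needed to realize $\phi$ via a finite-dimensional cyclic representation.
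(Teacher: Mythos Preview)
Your forward direction is essentially fine, modulo the standard density/perturbation argument needed at points where the denominator $q$ vanishes (the paper handles this explicitly in (iii)$\Rightarrow$(ii) of Theorem~\ref{t:h17}).

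The converse has a genuine gap at the finite-dimensional truncation step. After GNS over the real closed field $R$, you claim the existence of a ``finite-dimensional $X_i$-invariant subspace containing $\xi$'' large enough to compute $f$. No such subspace need exist: the GNS space of a generic positive functional on the free $\star$-algebra is infinite-dimensional with no nontrivial finite-dimensional invariant subspaces, since left multiplication by $x_i$ strictly raises degree. The correct move is a \emph{truncated} GNS: restrict the form $\langle a,b\rangle_\phi=\phi(\sig(ab^\star))$ to $\R\mx_{d-1}$ (a finite-dimensional space), perturb or quotient to make it nondegenerate, and define each $X_i$ as the \emph{compression} of left multiplication by $x_i$ to this subspace, not the restriction. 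One then verifies $\langle w(\uX)\xi,\xi\rangle=\phi(\sig(w))$ for all $|w|\le 2d-1$; this is exactly the content of \cite[Proposition~2.5]{hkmConvex} invoked in the paper. With this fix your Tarski-transfer strategy goes through, because the matrix size is now bounded a priori by $|\mx_{d-1}|$ and the statement ``there exist symmetric matrices and a vector state of that size with $f<0$'' is first-order.

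The paper takes a shorter route that avoids real closed fields entirely. Since $f$ lies in the finitely generated subring $\R[\sig(w):w\in\mx_{2d}\setminus\{1\}]$, and the principal minors of the Hankel matrix $H_d=(\sig(uv^\star))_{u,v\in\mx_d}$ are themselves quotients of elements of $\Omega$ (Proposition~\ref{p:psdhank}, the key technical lemma, proved via \cite{klep2018positive}), the hypothesis ``$f$ is not a quotient in $\Omega$'' descends to ``$f$ is not a quotient in the finitely generated preordering generated by those minors.'' The classical Krivine--Stengle Positivstellensatz over $\R$ then produces a real homomorphism $\varphi$ with $\varphi(f)<0$ and $\varphi(H_d)\succeq0$, and the same truncated GNS (over $\R$, after an $\varepsilon$-perturbation from Lemma~\ref{l:pd}) yields the matricial witness directly. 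Your route trades Proposition~\ref{p:psdhank} for Tarski transfer; both are legitimate once the invariant-subspace step is replaced by compression, but as written that step fails.
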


For example, 
$$\sig(x_1^2)\sig(x_2^2)-\sig(x_1x_2)^2=
\frac{\sig\Big(
\big(\sig(x_1^2)x_2-\sig(x_1x_2)x_1\big)^2
\Big)}{\sig(x_1^2)}$$
is an algebraic certificate for the Cauchy-Schwarz inequality, 
a sum of hermitian squares (SOHS) certificate
of the form guaranteed for all global state polynomial inequalities by Theorem \ref{thm:a}.
As a consequence of Theorem \ref{thm:a}, positivity of a state polynomial on all matrix tuples and matricial states implies positivity on all bounded operators and states.

After global positivity, we turn to constrained positivity of state polynomials in Section \ref{sec:noncyclic}. We restrict ourselves to constraint sets $C\subset\fatS$ that are \emph{balanced}: namely, $C$ is closed under the involution $\star$, and the non-symmetric elements of $C$ come in pairs with their negatives (to allow us to handle equality constraints).
Let $\cH$ be a separable real Hilbert space. Given a balanced set $C\subset\fatS$, let $\cD_C^\infty$ be the set of all pairs $(\lambda;\uX)$ of a state $\lambda$ on $\cB(\cH)$ and $\uX\in\cB(\cH)^n$ such that $c(\lambda;\uX)\succeq0$ for all $c\in C$. We call $\cD_C^\infty$ the \emph{state semialgebraic set} constrained by $C$.
While Theorem \ref{thm:a} gives an SOHS certificate for global positivity of a state polynomial (even when only matrix evaluations are considered), there is no comparable analog for positivity on arbitrary state semialgebraic sets. 
In Section \ref{ssec:nogo} we show that the state versions of some of the classic (Krivine-Stengle and Schm\"udgen) Positivstellens\"atze fail in general.
Nevertheless, 
there is an analog of Putinar's archimedean Positivstellensatz \cite{Putinar1993positive}. 
We say that $C\subset\fatS$ is \emph{algebraically bounded} if $N-x_1^2-\cdots-x_n^2 = \sum_ip_ic_ip_i^\star$ for some $c_i\in C$ and nc polynomials $p_i$. Note that $\cD_C^\infty$ for an algebraically bounded $C$ is bounded in operator norm; conversely, if $\cD_C^\infty$ is bounded, then one can make $C$ algebraically bounded 
without changing the state semialgebraic set $\cD_C^\infty$
by adding a single constraint.
For algebraically-bounded constraint sets, we obtain the following Positivstellensatz.

\begin{thmA}[{Theorem \ref{thm:nocyc}}]\label{t:b}
Let $f$ be a state polynomial, and $C$ a balanced algebraically bounded set of nc state polynomials. 
Then $f\ge0$ on $\cD_C^\infty$ if and only if for every $\varepsilon>0$,
$$f+\varepsilon = \sum_i \sig(h_ic_ih_i^\star)$$
for some nc state polynomials $h_i$ and $c_i\in\{1\}\cup C$.
\end{thmA}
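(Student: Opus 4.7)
The strategy mirrors the standard archimedean Positivstellensatz: Hahn--Banach produces a positive functional on the commutative algebra $\skinnyS$, and a GNS construction realizes it by operators. Write $M_C:=\{\sum_i\sig(h_ic_ih_i^\star):h_i\in\fatS,\,c_i\in\{1\}\cup C\}\subset\skinnyS$. Necessity is immediate: for $(\lambda;\uX)\in\cD_C^\infty$, each generator evaluates to $\sig(hch^\star)(\lambda;\uX)=\lambda\bigl(h(\lambda;\uX)\,c(\lambda;\uX)\,h(\lambda;\uX)^*\bigr)\ge 0$ because $c(\lambda;\uX)\succeq 0$ and $\lambda$ is a state, so $f+\varepsilon\ge 0$ pointwise for every $\varepsilon>0$, whence $f\ge 0$.

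For sufficiency, argue by contrapositive: assume $f+\varepsilon_0\notin M_C$ for some $\varepsilon_0>0$. First, $M_C$ is a quadratic module in the commutative algebra $\skinnyS$: it is a convex cone containing $1$ and stable under multiplication by squares $g^2$ from $\skinnyS$, since $g$ is central and $\star$-fixed, so $g^2\sig(hch^\star)=\sig((gh)c(gh)^\star)$. Crucially, $M_C$ is also \emph{archimedean} in $\skinnyS$. Algebraic boundedness yields $N-\sum\sig(x_i^2)\in M_C$; an inductive bound on $ww^\star$ against $N^{|w|}$ inside the nc quadratic module generated by $C$, combined with the Cauchy--Schwarz-type identity $\sig((w-\sig(w))(w-\sig(w))^\star)\in M_C$ and the algebraic trick $2K(K\pm p)=(K\pm p)^2+(K^2-p^2)$, gives $K_w\pm\sig(w)\in M_C$ for suitable $K_w>0$. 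The classical commutative archimedean Positivstellensatz (applied on the finitely generated subalgebra containing the state symbols occurring in $f$) then supplies an $\R$-algebra character $\chi:\skinnyS\to\R$ with $\chi\ge 0$ on $M_C$ and $\chi(f+\varepsilon_0)\le 0$, so in particular $\chi(f)<0$.

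Finally, convert $\chi$ into the desired operator realization by GNS on $\fatS$. Set $\tau:=\chi\circ\sig:\fatS\to\R$ and $\langle h_1,h_2\rangle:=\tau(h_2^\star h_1)$; positive semidefiniteness follows from $\sig(h^\star h)\in M_C$ and $\chi\ge 0$ on $M_C$. Completing the quotient by its kernel produces a Hilbert space $\cH$ on which left multiplication by $x_i$ descends to a symmetric operator $X_i$, bounded because $\sum_j\|X_j h\|^2=\chi\bigl(\sig(h^\star(\sum_j x_j^2)h)\bigr)\le N\|h\|^2$ by algebraic boundedness. The decisive observation
\[\bigl\|(\sig(w)-\chi(\sig(w)))h\bigr\|^2=\chi\bigl((\sig(w)-\chi(\sig(w)))^2\bigr)\cdot\chi(\sig(h^\star h))=0,\]
combining $\skinnyS$-linearity of $\sig$ with multiplicativity of $\chi$, guarantees that each central $\sig(w)$ acts on $\cH$ as the scalar $\chi(\sig(w))$. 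The vector state $\lambda(A):=\langle A\bar 1,\bar 1\rangle$ then satisfies $\lambda(w(\uX))=\chi(\sig(w))$ for every word, so $f(\lambda;\uX)=\chi(f)<0$. The inclusion $(\lambda;\uX)\in\cD_C^\infty$ follows from $\langle c\bar h,\bar h\rangle=\chi(\sig(h^\star ch))\ge 0$ for $c\in C$, with non-symmetric constraints (paired as $\{c,-c\}$ in the balanced set $C$) turning into operator identities in the standard way.

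The main obstacle is the archimedean step: lifting the operator-norm bound on $X_i$ encoded by algebraic boundedness in $\fatS$ to a commutative archimedean control on \emph{every} state symbol $\sig(w)$ in $\skinnyS$, which is exactly what the Cauchy--Schwarz iteration achieves. A secondary technicality is the choice of inner-product convention $\tau(h_2^\star h_1)$, which makes $X_i$ symmetric and ensures that the $c\in C$ constraints translate into positive operator inequalities on $\cH$.
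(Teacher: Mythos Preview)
Your proposal is correct and follows essentially the same route as the paper: show the relevant quadratic module in $\skinnyS$ is archimedean (the paper's Lemma~\ref{l:arch}), apply the Kadison--Dubois representation to obtain a separating character $\chi$, and then run a GNS construction to realize $\chi$ by a state and bounded operators. The only cosmetic differences are that the paper works with the slightly smaller module $\QM(C^\sig)$ (multipliers in $\RX$) and deduces the $M_C$-version via the inclusion~\eqref{eq:niceMC}, and that the paper performs GNS on $\RX$ via the functional $L=\chi\circ\sig|_{\RX}$ and then compresses to the cyclic subspace, whereas your GNS on $\fatS$ with $\tau=\chi\circ\sig$ is already cyclic and your observation that $\sig(w)$ acts as the scalar $\chi(\sig(w))$ makes the two constructions coincide.
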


Theorem \ref{t:b} is the cornerstone of the state optimization framework we develop in Section \ref{sec:hierarchy}.
For a state polynomial $f$ and a balanced algebraically bounded set $C \subset \fatS$, Theorem \ref{t:b} gives rise to an SDP hierarchy that produces a convergent increasing sequence with limit $\inf_{\cD_C^\infty} f$ (Corollary \ref{cor:pure_cvg}).
Under a mild condition on $C$, these SDPs satisfy strong duality (Proposition \ref{p:nogap}). Furthermore, under flatness and extremal assumptions, the dual SDPs and a variant of the Gelfand-Naimark-Segal construction allow us to extract a finite-dimensional minimizer for 
$\inf_{\cD_C^\infty} f$, and thus obtain finite convergence of our hierarchy (Proposition \ref{prop:st_flat}).
The complexity of the involved SDPs grows rather quickly;
nevertheless, we can
exploit
correlative sparsity (Theorem \ref{t:sparse}) and sign symmetry (Theorem \ref{t:sign}) patterns in $f$ and $C$ to reduce the SDP sizes considerably.

Finally, we apply our newly developed theory to quantum correlations in networks.
Section \ref{sec:bell} considers nonlinear Bell inequalities \cite{Uffink,Chaves} in the standard Bell scenario, where two parties share an entanglement source. 
While a linear Bell inequality for (reduced) quantum correlations in such a scenario corresponds to eigenvalue optimization of an nc polynomial,
a polynomial Bell inequality for quantum correlations corresponds to a state polynomial optimization problem. 
The form of the constraints arising from the quantum mechanical formalism allows for a further reduction of the size of the 
obtained SDPs in the hierarchy
using a conditional expectation induced by the underlying group structure of binary observables (Proposition \ref{p:condexp}).
Section \ref{sec:networks} generalizes the aforementioned correspondence to correlation inequalities in general network scenarios \cite{Fri}. That is, several entanglement sources and sharing patterns are permitted. 
Following \cite{pozas2019bounding,ligthart21,renou22,ligthart23}, reduced quantum models for a network can be characterized using state polynomial constraints. This allows us to apply our optimization results to analyze polynomial Bell inequalities for correlations in arbitrary networks.

\begin{thmA}[{Corollaries \ref{cor:pure_cvg} and \ref{c:network}}]
The largest quantum violation of a polynomial Bell inequality for classical correlations in a network scenario is the limit 
of a convergent decreasing sequence produced by the
Positivstellensatz-generated SDP hierarchy.
\end{thmA}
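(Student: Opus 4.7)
The plan is to combine the network-to-state-polynomial-optimization translation of Section \ref{sec:networks} with the convergent SDP hierarchy of Section \ref{sec:hierarchy}. First I would recall the setup: a reduced quantum model for a network scenario consists of a separable Hilbert space $\cH$, a tuple $\uX\in\cB(\cH)^n$ of symmetric bounded operators representing the parties' observables (with spectral constraints such as $X_i^2=I$ for binary observables, and commutation $[X_i,X_j]=0$ whenever $X_i,X_j$ act on disjoint parties), and a state $\lambda$ on $\cB(\cH)$ subject to source-independence constraints of the form $\sig(w_1w_2)=\sig(w_1)\sig(w_2)$ whenever $w_1,w_2$ involve operators from disjoint entanglement sources. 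Following \cite{pozas2019bounding,ligthart21,renou22,ligthart23}, all these requirements package into a set $C\subset\fatS$ of nc state polynomial (in)equalities. Since each defining relation and its $\star$-adjoint, together with the negatives of non-symmetric equality constraints, are included, $C$ is balanced in the sense of Section \ref{sec:noncyclic}. Because the observables are uniformly bounded (unitary or projection-valued), the relation $N-\sum_i x_i^2$ lies in the quadratic module generated by $C$, so $C$ is algebraically bounded and the associated state semialgebraic set $\cD_C^\infty$ is exactly the set of reduced quantum models for the network.

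A polynomial Bell inequality is a state polynomial $f\in\skinnyS$ whose expected value on classical correlations is bounded by some constant $\beta$; the largest quantum violation is then $\sup_{\cD_C^\infty} f - \beta$, equivalently the negative of $\inf_{\cD_C^\infty}(-f) + \beta$. Applying Corollary \ref{cor:pure_cvg} to $-f$ with the balanced algebraically bounded constraint set $C$ yields a non-decreasing sequence of SDP-computable lower bounds for $\inf_{\cD_C^\infty}(-f)$ that converges to the infimum; negating, this is a non-increasing sequence of SDP-computable upper bounds converging to the maximum quantum violation. The underlying reason for convergence is Theorem \ref{t:b}: for every $\varepsilon>0$ the state polynomial $\sup_{\cD_C^\infty}f+\varepsilon-f$ admits a decomposition $\sum_i \sig(h_ic_ih_i^\star)$ with $c_i\in\{1\}\cup C$, and the truncation of this identity to bounded degree becomes feasible for the corresponding SDP at some level of the hierarchy, producing the matching numerical upper bound up to $\varepsilon$.

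Finally, I would invoke Corollary \ref{c:network} to handle the network-specific content, namely that the balanced algebraically bounded set $C$ can be written down explicitly from the network's sharing pattern and party/source labels, and that evaluations on $\cD_C^\infty$ recover precisely the set of quantum correlations achievable in the network (this is where the reduced-model characterizations from \cite{pozas2019bounding,ligthart21,renou22,ligthart23} enter). The main obstacle, as I foresee it, is not the hierarchy itself but the verification that the state-factorization equalities really do sit inside a balanced archimedean module: one must check that adding products like $\sig(w_1)\sig(w_2)-\sig(w_1w_2)$ and their negatives to $C$ preserves algebraic boundedness, and that the resulting $\cD_C^\infty$ is exactly the intended set of network-compatible states. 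Once this bookkeeping is settled, the theorem follows as a direct specialization of Corollary \ref{cor:pure_cvg} to the network constraint set furnished by Corollary \ref{c:network}.
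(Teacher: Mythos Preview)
Your proposal is correct and follows essentially the same route as the paper: Theorem~C is not given a standalone proof there but is stated as the conjunction of Corollary~\ref{c:network} (which rewrites the supremum of a Bell expression over reduced quantum models as a state polynomial optimization problem with an explicit balanced, algebraically bounded constraint set~$C$) and Corollary~\ref{cor:pure_cvg} (which supplies the convergent SDP hierarchy for any such~$C$). Your only minor deviation is phrasing the observable constraints as $X_i^2=I$ rather than the projection form $x_{m,i,j}^2=x_{m,i,j}$ used in Corollary~\ref{c:network}, but these are equivalent parameterizations and the paper itself uses both conventions in Sections~\ref{sec:bell} and~\ref{sec:networks}.
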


Using the derived optimization tools, we establish novel largest quantum violations or their nontrivial upper bounds for various polynomial Bell inequalities in the bipartite scenario (Section \ref{sec:exa}) and in the bilocal tripartite scenario (Section \ref{sec:biloc}) from the literature.

\subsection*{Acknowledgements}
The authors thank Timotej Hrga for sharing his expertise and solving the large semidefinite program pertaining to one of the Bell inequality examples.

\section{Preliminaries}
\label{sec:prelim}

We begin by recalling basic notions about noncommutative polynomials, introducing \state polynomials and corresponding semialgebraic sets that will be used throughout the paper.

\subsection{Noncommutative polynomials and \state polynomials}
\label{sec:prelim_nc}
Let $\sbb{k}$ denote the space of all real symmetric matrices of order $k$. For a set $A$, we use $|A|$ to denote its cardinality.
For a fixed $n \in \N$, we consider a finite alphabet $x_1,\dots,x_n$ and generate all possible words of finite length in these letters. 
The empty word is denoted by 1.
The resulting set of words is the {\em free monoid} $\mx$, with $\underline{x} = (x_1,\dots, x_n)$. 
Let $|w|$ denote the length of $w\in\mx$.
We denote by $\RX$ the set of real polynomials in noncommutative variables, abbreviated as {\em nc polynomials}.
The free algebra $\RX$ is equipped with the involution $\star$ that fixes $\R \cup \{x_1,\dots,x_n\}$ point-wise and reverses words, so that $\RX$ is the $\star$-algebra freely generated by $n$ symmetric variables $x_1,\dots,x_n$. 

For $w\in \mx\setminus\{1\}$ let $\sig(w)$ be a symbol subject to the relation $\sig(w)=\sig(w^\star)$, and let
$$\skinnyS := \R\big[\sig(w)\colon  w\in \mx\setminus\{1\}\big],$$
a commutative polynomial ring in infinitely many variables. An element in $\skinnyS$ of the form  $\prod_{j=1}^m \sig(u_j)$ for $u_j\in\mx\setminus\{1\}$ is called an \emph{$\skinnyS$-word}. The set of all $\skinnyS$-words is a vector space basis of $\skinnyS$.
The \emph{degree} of an $\skinnyS$-word $\prod_j\sig(u_j)$ equals $\sum_j|u_j|$. The vector of $\skinnyS$-words whose degrees are no greater than $d$ is denoted by $W_d^{\skinnyS}$.

We also let $\fatS := \skinnyS \otimes \RX$ be the
free $\skinnyS$-algebra on $\ux$.
Elements of $\skinnyS$ are called \emph{\state polynomials}, and elements
of $\fatS$ are \emph{\ncstate polynomials}. For example, $\sig(x_1x_2)-\sig(x_1)\sig(x_2) \in \skinnyS$ and $x_1x_2x_1-\sig(x_2) x_1+\sig(x_1x_2)-\sig(x_1)\sig(x_2) \in \fatS = \skinnyS\langle x_1,x_2 \rangle$.
An \ncstate polynomial of the form  $\prod_{j=1}^m \sig(u_j) v$ for $u_j\in\mx\setminus\{1\}$ and $v\in\mx$ is called an \emph{$\fatS$-word}. The set of all $\fatS$-words is a vector space basis of $\fatS$.
The \emph{degree} of an $\fatS$-word $\prod_j\sig(u_j) v$ equals $|v|+\sum_j|u_j|$.
The \emph{degree} of $f \in \fatS$ is the maximal degree of $\fatS$-words in the expansion of $f$.\looseness=-1

The involution on $\fatS$, denoted also by $\star$, fixes $\{ x_1,\dots, x_n\}\cup \skinnyS$ point-wise, and reverses words from $\mx$.
The set of all {\em symmetric elements} of $\fatS$ is defined as $\SymS := \{f \in \fatS : f = f^\star  \}$.
We also consider the 
unital $\skinnyS$-linear $\star$-map $\sig:\fatS \to \skinnyS$ uniquely determined by $w\mapsto \sig(w)$ for $w\in\mx\setminus\{1\}$.

\subsection{State semialgebraic sets}
\label{sec:semialg}

First we recall some classical notions from functional analysis and operator algebras \cite{reed80,Tak02}.
Let $\cH$ be a separable real Hilbert space, i.e., $\cH$ admits a countable orthonormal basis, and let $\cB(\cH)$ be the Banach algebra of bounded linear operators on $\cH$.
A \emph{state} $\lambda$ on $\cB(\cH)$ \cite[Definition I.9.4]{Tak02} is a positive unital $\star$-linear functional, namely 
\begin{enumerate}[(a)]
\item $\lambda:\cB(\cH)\to\R$ is a linear map, \item $\lambda(I)=1$,
\item $\lambda(Y^*)=\lambda(Y)$ for all $Y\in\cB(\cH)$,
\item $\lambda(YY^*)\ge0$ for all $Y\in\cB(\cH)$.
\end{enumerate}
Notice that (c) is a consequence of (a) and (d), and $\lambda$ satisfying (a)-(d) is continuous in the norm topology \cite[Lemma I.9.9]{Tak02}.
Every unit vector $v\in\cH$ determines a \emph{vector state} $Y\mapsto \langle Yv,v\rangle$. 
More general states are obtained using \emph{trace-class} operators \cite[Section VI.6]{reed80}. If $(e_j)_j$ is an orthonormal basis of $\cH$, then $T\in\cB(\cH)$ is trace-class if the series $\sum_j\langle \sqrt{T^*T} e_j,e_j\rangle$ converges. Every positive semidefinite trace-class operator $\rho\in\cB(\cH)$ with trace 1 gives rise to a state via $Y\mapsto \tr(\rho Y)=\sum_j\langle \rho Y e_j,e_j\rangle$. Such $\rho$ is also called a \emph{density operator}, and the state it determines is called \emph{normal} \cite[Definition III.2.13]{Tak02}. If $\cH$ is finite-dimensional, then every state on $\cB(\cH)$ is normal.
Let $\cS(\cH)$ denote the set of all states on $\cB(\cH)$.

Given an \ncstate polynomial $a\in\fatS$, a state $\lambda\in\cS(\cH)$, and a tuple $\uX=(X_1,\dots,X_n)$ of self-adjoint operators $X_j=X_j^*\in\cB(\cH)$, there is a natural evaluation
$$a(\lambda;\uX)\in\cB(\cH)$$
obtained by replacing $w$ with $w(X)\in\cB(\cH)$ and $\sig(w)$ with $\lambda(w(X))\in\R$.
Equivalently, each pair $(\lambda;\uX)$ gives rise to a $\star$-representation $\fatS\to\cB(\cH)$ that intertwines $\sig:\fatS\to\skinnyS$ and $\lambda:\cB(\cH)\to \R$.

Throughout the paper let $\cH$ be a separable infinite-dimensional real Hilbert space; note that up to isomorphism, there is only one such Hilbert space, so the reader may have in mind the space of real square-summable-sequences $\cH=\ell^2$.

\def\vD{\vec{\mathcal{D}}}
\begin{definition}
A set $C\subseteq\fatS$ is \emph{balanced} if $C^\star=C$ and $-(C\setminus\SymS)\subseteq C$.
Given a balanced $C$ let
\begin{align}\label{eq:DSinf}
\mathcal{D}_C^\infty& := \{ (\lambda,\underline{X}) \in \cS(\cH)\times\cB(\cH)^n : 
X_j=X_j^*,\ c(\lambda;\underline{X}) \succeq 0 \ \text{for all}\ c\in C  \} 
\end{align}
and
\begin{align}
\label{eq:DS}
\mathcal{D}_C := \bigcup_{k \in \N} \{ (\lambda,\underline{X}) \in \cS(\R^k)\times\sbb{k}^n : c(\lambda;\underline{X}) \succeq 0 \ \text{for all}\ c\in C  \} \,.
\end{align}
Let $\vD_C^\infty$ and $\vD_C$ be the analogs of \eqref{eq:DSinf} and \eqref{eq:DS}, respectively, where one restricts only to vector states.
\end{definition}

Note that any subset of $\SymS$ is an example of a balanced set. The motivation behind more general balanced sets is to allow for non-symmetric {\it equality} constraints, for example commutation relations. Indeed, every non-symmetric element $c$ in a balanced set contributes inequalities $c(\lambda;\underline{X}) \succeq 0$ and $-c(\lambda;\underline{X}) \succeq 0$, and thus $c(\lambda;\underline{X}) =0$.

While \eqref{eq:DS} is a desired candidate for testing positivity of \state polynomials, one needs to consider bounded operators on an infinite-dimensional Hilbert space in order to obtain sums-of-squares certificates valid for sufficiently general $C$ (cf. \cite{Helton04} for examples in the freely noncommutative setting, or the refutation of Connes' embedding conjecture in the tracial setting \cite{CECfalse,CECsohs}).
Thus we mostly consider positivity on sets \eqref{eq:DSinf}.

One might wonder why we restrict ourselves to real Hilbert spaces. In the complex framework, the only difference is that the symbol $\sig(w)$ needs to be split into two symbols, the real and imaginary part, to properly define $\sig(w^\star)=\overline{\sig(w)}$. Thus one is pressed to work with real variables also in the complex framework. The real framework is also more convenient to work with in optimization, especially from the perspective of implementation using the standard semidefinite programming solvers. The complex adaptation is detailed in Section \ref{ss:cx}.

\subsection{State polynomials versus trace polynomials}\label{ss:SvT}

Before diving into the main contributions of this paper, let us discuss the relation between \ncstate polynomials and nc trace polynomials \cite{P76,KS17}, whose optimization perspective has been considered earlier by the first three authors \cite{KMV}.

Trace polynomials originated in invariant theory \cite{P76} to describe polynomial functions $\sbb{k}^n\to\R$ and polynomial maps $\sbb{k}^n\to\sbb{k}$ that are invariant and equivariant, respectively, under the orthogonal group acting on $\sbb{k}^n$ via simultaneous conjugation. To each $w\in\mx\setminus\{1\}$ ones assigns a formal trace symbol $\tr(w)$. These symbols are required to commute and to satisfy $\tr(w)=\tr(w^\star)$ and $\tr(uv)=\tr(vu)$ for $u,v,w\in\mx$. As in the case of (nc) state polynomials, one defines \emph{(pure) trace polynomials} as $\skinnyT=\R[\tr(w)\colon w\in\mx\setminus\{1\}]$ and \emph{nc trace polynomials} as $\fatT=\skinnyT\otimes\RX$. 
For example, trace polynomials $\tr(x_1)\tr(x_2x_1x_2)$ and $\tr(x_1)\tr(x_2^2x_1)$ are the same, but analogous state polynomials $\sig(x_1)\sig(x_2x_1x_2)$ and $\sig(x_1)\sig(x_2^2x_1)$ are not.
We endow $\fatT$ with the involution $\star$ that fixes the elements of $\{x_1,\dots,x_n\}\cup\skinnyT$, and with the degree function that is defined analogously as in the nc state polynomial case (namely, the degree of $\prod_j\sig(u_j)v$ equals $|v|+\sum_j|u_j|$).
There is also a natural unital $\skinnyT$-linear map $\tr:\fatT\to\skinnyT$, which satisfies $\tr(fg)=\tr(gf)$ for $f,g\in\fatT$.

Algebraically speaking, $\star$-algebras $\skinnyT$ and $\fatT$ are natural quotients of algebras $\skinnyS$ and $\fatS$, respectively, since the symbol $\tr$ satisfies the relations of the symbol $\sig$. Note however that the symbol $\tr$ also satisfies $\tr(uv)=\tr(vu)$, which is a property not shared among all states. In \cite{KMV}, the authors therefore considered evaluations of nc trace polynomials on self-adjoint operators from real \emph{von Neumann algebras}, and \emph{tracial states}. A real von Neumann algebra $\cF$ is $*$-subalgebra of bounded operators on a real Hilbert space, closed in the weak operator topology. A tracial state on $\cF$ is a positive unital $\star$-linear functional $\lambda:\cF\to\R$ that satisfies $\lambda(ab)=\lambda(ba)$ for $a,b\in\cF$. For operators on a $k$-dimensional Hilbert space, there is only one tracial state: the normalized trace on $k\times k$ real matrices. On the other hand, there is no tracial state on $\cB(\cH)$ for an infinite-dimensional Hilbert space $\cH$, which is the reason why von Neumann algebras need to be considered.
The discussed restriction to tracial states and von Neumann algebras distinguishes positivity of trace polynomials and positivity of state polynomials. For example, the trace polynomial $t=\tr(x_1^6)\tr(x_2^6)^2-\tr(x_1^2x_2^4)^3$ is nonnegative on all tracial states and self-adjoint operators from any von Neumann algebra, by H\"older's inequality for noncommutative $L^p$ spaces \cite[Theorem IX.2.13]{Tak03}. On the other hand, the state polynomial $s=\sig(x_1^6)\sig(x_2^6)^2-\sig(x_1^2x_2^4)^3$ has value $-18$ at a pair of $2\times 2$ matrices $X_1,X_2$ and a state $\lambda$ on $2\times 2$ matrices,
$$X_1=\begin{pmatrix}1&1\\1&0
\end{pmatrix},\quad 
X_2=\begin{pmatrix}0&1\\1&1
\end{pmatrix},\quad
\lambda(Y)=\begin{pmatrix}1&0\end{pmatrix}
Y\begin{pmatrix}1\\0\end{pmatrix}.$$

The paper \cite{KMV} provided positivity certificates and optimization procedures for trace polynomials subject to tracial constraints. These results are of merit to noncommutative probability (since tracial states are the noncommutative analogs of expectations, and pure trace polynomials thus correspond to higher noncommutative moments) and certain studies in quantum information theory. For example, trace polynomial optimization can be used for finding violations of bipartite polynomial Bell inequalities attained by maximally entangled states (which correspond to tracial states), and entanglement detection in highly invariant multipartite Werner states \cite{huber2022dimension}. 

With the above review of trace polynomials in mind, let us list the main differences between state and trace polynomials, from the perspective of positivity and optimization. 
Firstly, trace polynomials and their function-theoretic properties date back to the work of Procesi \cite{P76}; on the other hand, state polynomials are completely new objects, and their function-theoretic properties are derived in Section \ref{sec:freean}.
Secondly, while the natural analog of Hilbert's 17h problem holds for state polynomials (Theorem \ref{t:h17}, its direct analog for trace polynomials fails (see the discussion at the beginning of Section \ref{sec:H17}). Thirdly, trace polynomial framework does not apply to the optimization problems in several operator variables where one wishes to quantify the optimal value over \emph{all} states. Concrete examples of such optimization problems are bipartite polynomial Bell inequalities over arbitrary (non-maximally) entangled states, and network Bell inequalities (considered in Sections \ref{sec:bell} and \ref{sec:biloc}, respectively).
Fourthly, the distinction between trace and state polynomials does not affect only the formal scheme of positivity certificates and optimization algorithms, but also the details in their derivation. For example, certificates for trace positivity are simpler than certificates for state positivity because tracial states satisfy the ``bounded marginal moments imply bounded mixed moments'' property, which fails for general states (cf. Remark \ref{r:nontr}). Similarly, that derivation of strong duality in state polynomial optimization differs from that of strong duality in trace polynomial optimization (cf. Proposition \ref{p:nogap}). Furthermore, while the setup of \cite{KMV} allows for a characterization of positive \emph{nc} trace polynomials on bounded domains \cite[Corollary 4.8]{KMV}, one obtains only characterization of positive state polynomials (but not nc state polynomials) on bounded domains (Theorem \ref{thm:nocyc}).

Finally, the correspondence between density trace-class operators (or density matrices, in the finite-dimensional case) and states mentioned earlier begs the question why one cannot derive positivity and optimization of state polynomials more directly from the corresponding already-established theory for trace polynomials. The reason is that optimization of trace polynomials pertains to evaluations of the {\it normalized} trace on matrices, or tracial states on von Neumann algebras. On the other hand, modelling state polynomials with trace polynomials (in an additional variable corresponding to the trace-class operator) would require considering the (usual) non-normalized trace. However, for evaluations of trace polynomials with respect to a non-normalized trace, there is no comparable dimension-independent theory of positivity and optimization.

\section{A functional perspective on \ncstate polynomials}\label{sec:freean}

One can draw a comparison between \ncstate polynomials and noncommutative functions as developed e.g.~in \cite{KVV}. 
While \ncstate polynomials are not noncommutative functions (since their evaluations on matrix tuples are not compatible with direct sums of matrix tuples), they nevertheless admit a closely related intrinsic characterization (Proposition \ref{p:freean}), and like nc polynomials, they are determined on matrices of bounded size (Proposition \ref{prop:id}). In the proofs of these statements we utilize evaluations of trace polynomials (Section \ref{ss:SvT}) on matrices using the usual (non-normalized) matrix trace.

\begin{proposition}\label{prop:id}
If $f \in \fatS$ is of degree $d$, then there exist
$\lambda\in\cS(\R^{2d+1})$ and $\uX\in\sbb{2d+1}^n$ such that $f(\lambda;\uX)\neq0$.
\end{proposition}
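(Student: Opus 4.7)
The plan is to absorb the state into an auxiliary density-operator variable and reduce to a standard nonvanishing result for nc trace polynomials, which is where the size $2d+1$ ultimately comes from via Cayley-Hamilton.

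\textbf{Step 1 (encoding the state).} Let $\mathscr T_{n+1}$ denote the nc trace polynomial $\star$-algebra in the symmetric variables $x_0, x_1, \dots, x_n$, and define a $\star$-algebra homomorphism
$$\Phi\colon \fatS \longrightarrow \mathscr T_{n+1},\qquad x_j \mapsto x_j\ (j\ge 1),\qquad \sig(w)\mapsto \tr(x_0 w).$$
This is well-defined because the relation $\sig(w)=\sig(w^\star)$ is matched by $\tr(x_0 w) = \tr((x_0 w)^\star) = \tr(w^\star x_0) = \tr(x_0 w^\star)$, using cyclicity of $\tr$ together with the symmetry of $x_0$. Injectivity holds because $x_0$ appears exactly once in each factor $\tr(x_0 u_j)$, so the only coincidence induced by cyclicity and the $\star$-involution is $\tr(x_0 u) = \tr(x_0 u^\star)$, exactly matching the single relation $\sig(u)=\sig(u^\star)$ imposed on $\fatS$.

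\textbf{Step 2 (degree bound).} An $\fatS$-word $\prod_{j=1}^m \sig(u_j)\cdot \mu$ of degree $|\mu|+\sum|u_j|\le d$ satisfies $m\le \sum|u_j|\le d$ (since each $|u_j|\ge 1$), and maps under $\Phi$ to a trace monomial of degree $|\mu|+\sum(|u_j|+1) = d+m\le 2d$. Hence $\Phi(f)$ is a nonzero element of $\mathscr T_{n+1}$ of trace-degree at most $2d$.

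\textbf{Step 3 (Cayley-Hamilton bound).} The matrix algebra $M_k(\R)$ satisfies a trace polynomial identity of degree $k$ coming from Cayley-Hamilton, so any nonzero trace polynomial of degree $\le k-1$ is nonvanishing on $\sbb{k}^{n+1}$. Setting $k = 2d+1$ produces $Y_0, Y_1,\dots,Y_n\in \sbb{2d+1}$ with $\Phi(f)(Y_0,\uY)\ne 0$.

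\textbf{Step 4 (convert $Y_0$ to a density matrix).} Decompose $f=\sum_{m=0}^{d} f_m$ according to the state-factor count per monomial; then each $\Phi(f_m)$ is $Y_0$-homogeneous of degree $m$. A nonzero $Y_0$-homogeneous polynomial cannot be divisible by the inhomogeneous linear form $\tr(Y_0)-1$, so it cannot vanish identically on the trace-one affine slice of $\sbb{2d+1}$; since density matrices are Zariski-dense in that slice, each nonzero $\Phi(f_m)$ takes a nonzero value at some density matrix together with some $\uY$. A generic choice of $\uY$ avoids the specific cancellation that would make $\Phi(f)(\cdot,\uY)$ divisible by $\tr(Y_0)-1$, and then $\Phi(f)(\rho,\uY)\ne 0$ for some density matrix $\rho$. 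Setting $\lambda(Y):=\tr(\rho Y)\in \cS(\R^{2d+1})$ gives $f(\lambda;\uY)=\Phi(f)(\rho,\uY)\ne 0$, as desired.

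The main obstacle is Step 4: the density matrices form a positive-codimension semialgebraic subset of $\sbb{2d+1}$, and one must verify that the nonvanishing of Step 3 survives the restriction of $Y_0$ to that subset. The $x_0$-homogeneous grading on $\Phi(f)$, combined with Zariski density of density matrices in the trace-one affine slice, resolves this delicate point. Step 3 is a standard consequence of Cayley-Hamilton, and the constant $2d+1$ exactly matches the degree-doubling $f\mapsto \Phi(f)$ noted in Step 2.
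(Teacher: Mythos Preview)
Your approach is essentially the paper's: encode the state into an extra variable $x_0$ via $\sig(w)\mapsto\tr(x_0w)$, then invoke Procesi's result that a nonzero trace polynomial of degree $\le 2d$ is not a trace identity of $\sbb{2d+1}$. Steps 1--3 are correct.

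The gap is in Step 4. You need $\Phi(f)$ to be nonzero at some $(\rho,\uY)$ with $\rho$ a density matrix, but your justification is the bare assertion that ``a generic choice of $\uY$ avoids the specific cancellation that would make $\Phi(f)(\cdot,\uY)$ divisible by $\tr(Y_0)-1$.'' This is not proved. The set of such $\uY$ is indeed Zariski-closed, but you give no reason why it is proper; a priori $\Phi(f)(Y_0,\uY)$ could equal $(\tr(Y_0)-1)\cdot h(Y_0,\uY)$ identically, and nothing you have written rules this out. The observation that each homogeneous piece $\Phi(f_m)$ is individually nonvanishing on the trace-one slice does not control their sum.

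The paper closes this gap by a homogenization trick that you are missing: instead of $\Phi(f)$, it forms
\[
g=\sum_i\alpha_i\,(\tr x_0)^{d-m_i}\prod_j\tr(x_0u_{i,j})\,v_i,
\]
which is $x_0$-homogeneous of degree $d$ and still has trace-degree $\le 2d$. One then finds (by Zariski density of positive-definite matrices) a point $(P,\uX)$ with $P\succ0$ and $g(P,\uX)\neq0$, and homogeneity gives $g(\tfrac{1}{\tr P}P,\uX)=(\tr P)^{-d}g(P,\uX)\neq0$. Since $g$ and $\Phi(f)$ agree on the trace-one slice, this yields $f(\lambda;\uX)\neq0$ for the state $\lambda=\tr(\tfrac{1}{\tr P}P\,\cdot\,)$. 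Inserting the factors $(\tr x_0)^{d-m_i}$ is precisely what makes the rescaling step legitimate; without it, your Step 4 does not go through as written.
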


\begin{proof}
Let
$$f=\sum_{i=1}^k\alpha_i\prod_{j=1}^{m_i}\sig(u_{i,j}) v_i.$$
Define a trace polynomial in variables $x_0,\dots,x_n$
$$g=\sum_{i=1}^k \alpha_i (\tr x_0)^{d-m_i} \prod_{j=1}^{m_i} \tr(x_0 u_{i,j}) v_i.$$
Note that $g\neq0$ since $f\neq0$, and the degree of $g$ (as a trace polynomial) is $2d$. By \cite[Proposition 8.3]{P76}, $g$ is not constantly zero on $\sbb{2d+1}^{n+1}$. Since positive definite matrices are Zariski dense in symmetric matrices, there exist $(P,\uX)\in\sbb{2d+1}\times\sbb{2d+1}^n$ with $P\succ0$ such that $g(P,\uX)\neq0$.
By the construction of $g$ we have $g(\alpha P,\uX)=\alpha^d g(P,\uX)$ for all $\alpha\in\R$. Therefore
$$f(\lambda;\uX)=g\left(\tfrac{1}{\tr P}P,\uX\right)\neq0$$
where $\lambda\in\cS(\R^{2d+1})$ is given by $\lambda(Y)=\tr(\frac{1}{\tr P} P Y)$.
\end{proof}

As mentioned in Section \ref{sec:semialg}, states in $\cS(\R^k)$ are in one-to-one correspondence with $k\times k$ density matrices (positive semidefinite matrices in $\sbb{k}$ with trace 1); namely, a density matrix $\rho$ gives rise to a state 
$Y\mapsto \tr(\rho Y)$.
For the purpose of the next proposition we resort to this identification. Hence we view $\cS(\R^k)$ as a Zariski dense subset of the affine space of symmetric matrices with trace 1. By a polynomial function on $\cS(\R^k)\times \sbb{k}^n$ we therefore refer to a polynomial function on the corresponding real affine space of dimension $\frac{k(k+1)}{2}-1+n\frac{k(k+1)}{2}$. Furthermore, $\cS(\R^k)\times \sbb{k}^n$ inherits the diagonal conjugate action of the orthogonal group $\O_k$ 
on tuples of symmetric $k\times k$ matrices:
$$O(X_0,\dots,X_n)O^* =
(OX_0O^*,\dots,OX_nO^*).
$$
The following is an \ncstate analog of \cite[Theorem 6.1]{KVV} and \cite[Proposition 3.1]{KS17} for nc polynomials.
\begin{proposition}\label{p:freean}
A sequence $(f_k)_{k\in\N}$ of polynomial maps $f_k:\cS(\R^k)\times \sbb{k}^n\to\sbb{k}$ satisfies
\begin{enumerate}[\rm (a)]
    \item $f_k(O (\lambda;\uX)O^*)
    =Of_k(\lambda;\uX)O^*$ 
    for all $k\in\N$, $O\in \O_k$ and $(\lambda;\uX)\in \cS(\R^k)\times \sbb{k}^n$,
    \item $f_{\ell k}(\rho\otimes\lambda;\uX^{\oplus \ell})=f_k(\lambda;\uX)^{\oplus \ell}$ 
    for all $k,\ell\in\N$, $\rho\in \cS(\R^\ell)$ and $(\lambda;\uX)\in \cS(\R^k)\times \sbb{k}^n$,
    \item $\sup_k\deg f_k<\infty$,
\end{enumerate}
if and only if it is given by an \ncstate polynomial.
\end{proposition}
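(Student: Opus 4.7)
The ``if'' direction is a direct verification. Given $f=\sum_i\alpha_i\prod_j\sig(u_{i,j})v_i\in\fatS$, set $f_k(\lambda;\uX):=f(\lambda;\uX)$. Property (a) is immediate since each $\lambda(w(\uX))$ is an $\O_k$-invariant scalar and each $v_i(\uX)$ transforms equivariantly, while (c) is transparent from the formula. For (b), identifying $\rho,\lambda$ with density matrices $P'\in\sbb{\ell},Q\in\sbb{k}$, the computation
\[
(\rho\otimes\lambda)\bigl(w(\uX^{\oplus\ell})\bigr)=\tr\bigl((P'\otimes Q)(I_\ell\otimes w(\uX))\bigr)=\tr(P')\cdot\tr(Qw(\uX))=\lambda\bigl(w(\uX)\bigr),
\]
combined with $v(\uX^{\oplus\ell})=I_\ell\otimes v(\uX)$, yields $f(\rho\otimes\lambda;\uX^{\oplus\ell})=f(\lambda;\uX)^{\oplus\ell}$.

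For the ``only if'' direction, the plan is to invoke invariant theory at one large $k$ and then propagate the conclusion to all $k$ via condition (b). Set $d:=\sup_k\deg f_k$ and fix $k\ge 2d+1$. Viewing $\cS(\R^k)$ as the Zariski-dense subset $\{\tr\rho=1\}$ of $\sbb{k}$, homogenize $f_k$ in $\rho$ to obtain an $\O_k$-equivariant polynomial map $\hat f_k:\sbb{k}^{n+1}\to\sbb{k}$, homogeneous in the $\rho$-variable. Procesi's classification of orthogonal concomitants \cite{P76} (applicable for $k$ large relative to $d$) then provides a representation
\[
\hat f_k(\rho,\uX)=\sum_i\alpha_i^{(k)}\prod_j\tr\bigl(w_{i,j}(\rho,\uX)\bigr)\cdot v_i(\rho,\uX),
\]
where $w_{i,j}$ and $v_i$ are words in the letters $\rho,x_1,\dots,x_n$. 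The key step is to apply hypothesis (b) as a structural restriction on this expression: substituting $\rho\mapsto P'\otimes Q$ and $\uX\mapsto I_\ell\otimes\uX$, each word $w$ becomes $(P')^{a_w}\otimes\tilde w(Q,\uX)$, where $a_w$ counts the occurrences of $\rho$ in $w$. Matching $f_{\ell k}(\rho\otimes\lambda;\uX^{\oplus\ell})$ against $f_k(\lambda;\uX)^{\oplus\ell}=I_\ell\otimes\hat f_k(Q,\uX)$ and letting $P'$ vary over $\cS(\R^\ell)$ for all $\ell$, the algebraic independence of the functions $P'\mapsto\tr((P')^a)$ on $\cS(\R^\ell)$ (for $\ell$ sufficiently large) forces $a_{w_{i,j}}=1$ in every trace factor and $a_{v_i}=0$. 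Consequently
\[
f_k(\lambda;\uX)=\sum_i\alpha_i\prod_j\lambda\bigl(u_{i,j}(\uX)\bigr)\cdot v_i(\uX)=f(\lambda;\uX)
\]
for the \ncstate polynomial $f:=\sum_i\alpha_i\prod_j\sig(u_{i,j})v_i\in\fatS$ of degree at most $d$, where $u_{i,j}$ and $v_i$ are now words in $\uX$ alone.

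For the small values $\tilde k<k$, the identification propagates via (b): for $(\lambda;\uX)\in\cS(\R^{\tilde k})\times\sbb{\tilde k}^n$ and any $\rho\in\cS(\R^\ell)$ with $\ell\tilde k\ge 2d+1$,
\[
f_{\tilde k}(\lambda;\uX)^{\oplus\ell}=f_{\ell\tilde k}(\rho\otimes\lambda;\uX^{\oplus\ell})=f(\rho\otimes\lambda;\uX^{\oplus\ell})=f(\lambda;\uX)^{\oplus\ell},
\]
using the already-verified ``if'' direction in the penultimate step, and uniqueness of $f$ is guaranteed by Proposition~\ref{prop:id}. The main obstacle will be the middle step: cleanly extracting the ``exactly one $\rho$ inside each trace, and no $\rho$ outside traces'' shape from Procesi's nonunique trace polynomial representation of $\hat f_k$. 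I expect this to require a careful bookkeeping of the $P'$-dependence of each monomial after the tensor substitution, combined with the algebraic independence of the power-sum functions $\tr((P')^a)$ on the affine variety $\cS(\R^\ell)$ for $\ell$ sufficiently large so that all relevant exponents are distinguishable.
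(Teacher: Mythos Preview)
Your overall architecture mirrors the paper's: use Procesi's description of $\O_k$-concomitants to write $f_k$ as a trace polynomial in $\rho,x_1,\dots,x_n$, then exploit condition~(b) to force each trace factor to contain $\rho$ exactly once and each ``free'' word to contain no $\rho$. The gap is in how you invoke~(b). You extract a trace polynomial $T$ from $\hat f_k$ at the single size $k$ and then substitute $\rho\mapsto P'\otimes Q$, $\uX\mapsto I_\ell\otimes\uX$ with $P'\in\cS(\R^\ell)$, $Q\in\cS(\R^k)$. This produces an $\ell k\times\ell k$ matrix, namely the formal trace expression $T$ evaluated on $\ell k\times\ell k$ inputs. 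But nothing ties this evaluation to $f_{\ell k}$: Procesi only guarantees $T$ agrees with $f_k$ on $k\times k$ inputs, and a trace polynomial that represents $f_k$ at size $k$ need not represent $f_{\ell k}$ at size $\ell k$. So the ``matching $f_{\ell k}(\rho\otimes\lambda;\uX^{\oplus\ell})$ against $I_\ell\otimes\hat f_k(Q,\uX)$'' step has no link to your expansion of $T$, and the algebraic-independence argument never engages with the actual object you control.

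The paper closes this gap by applying Procesi at \emph{every} $k\ge d+1$ to obtain a \emph{unique} $T_k$ (you worry about nonuniqueness, but for $k\ge d+1$ there are no trace identities in degree $\le d$, so uniqueness is automatic and is in fact the engine of the argument). Condition~(b) then becomes an identity between two concrete trace polynomials, $T_{2k}(\rho\otimes\lambda,I_2\otimes\uX)$ and $I_2\otimes T_k(\lambda,\uX)$, which the paper probes with a handful of explicit $\rho\in\cS(\R^2)$ (e.g.\ $\tfrac12 I_2$ versus the rank-one $\tfrac12\bigl(\begin{smallmatrix}1&1\\1&1\end{smallmatrix}\bigr)$, and $\tfrac14\diag(1,3)$) to isolate the $x_0$-content of each $u_{i,j}$ and $v_i$; uniqueness then makes these comparisons conclusive. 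Your algebraic-independence idea could in principle replace these explicit comparisons, but only after you have two uniquely determined trace polynomials at compatible sizes to compare---either by running Procesi at all large $k$ as the paper does, or by fixing a highly composite $k$ and applying~(b) \emph{downward} (so that $T_k$ is evaluated at size-$k$ inputs of the form $\rho'\otimes\lambda'$ with $\rho'\in\cS(\R^\ell)$, $\lambda'\in\cS(\R^{k/\ell})$, $\ell\mid k$), which your current write-up does not do.
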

\begin{proof}
$(\Leftarrow)$: Let $w\in\mx$. Then
\begin{align*}
w(O\uX O^*)&=Ow(\uX)O^*,\\
\sig(w)(O(\lambda;\uX) O^*)&= \tr(O\lambda O^*\cdot O w(\uX)O^*)=\tr(\lambda\cdot w(\uX))=\sig(w)(\lambda;\uX),\\
w(\uX^{\oplus\ell})&=w(\uX)^{\oplus \ell},\\
\sig(w)(\rho\otimes\lambda;\uX^{\oplus\ell})&= \tr(\rho\otimes\lambda\cdot w(\uX)^{\oplus \ell})=\tr(\rho)\tr(\lambda\cdot w(\uX))=\sig(w)(\lambda;\uX)
\end{align*}
for all orthogonal matrices $O\in\O_k$, tuples $\uX\in\sbb{k}^n$, and positive semidefinite matrices $\lambda\in \sbb{k}$ and $\rho\in\sbb{\ell}$ of trace 1. Next, 
$w(\uX)$ is, as a matrix of polynomial expressions in the entries of $\uX$, of degree $d$. Similarly, $\sig(w)(\lambda;\uX)$ is, as a polynomial expression in the entries of $\lambda$ and $\uX$, of degree $d+1$.
Since the algebra $\fatS$ is generated by elements of the form $w$ and $\sig(w)$, it follows that every \ncstate polynomial satisfies (a)-(c).

$(\Rightarrow)$:
By (a) and \cite[Theorem 7.3]{P76}, for every $k\in\N$ there exists an nc trace polynomial $T_k$ in variables $x_0,\dots,x_n$ 
such that $T_k$ agrees with $f_k$ on $\cS(\R^k)\times\sbb{k}^n$, 
the expression $\tr(x_0)$ does not appear in $T_k$ (cf. \cite[Section 5]{P76}), and $\deg T_k=\deg f_k$.
Denote $d=\max_k\deg f_k$. 

For $k\ge d+1$, the nc trace polynomial $T_k$ defined as above is unique \cite[Proposition 8.3]{P76}. Let
\begin{equation}\label{e:expand}
T_k=\sum_i\alpha_{k,i}\prod_{j}\tr(u_{i,j}) v_i
\end{equation}
where $\prod_{j}\tr(u_{i,j}) v_i$ are distinct trace words with $|v_i|+\sum_j|u_{i,j}|\le d$.
By uniqueness and comparison of (b) for $\rho=\frac12 I_2$ and $\rho=\frac12(\begin{smallmatrix}1&1\\1&1\end{smallmatrix})$, the variable $x_0$ cannot appear in any $v_i$.
Furthermore, if there are $m$ occurrences of $x_0$ in $u_{i,j}$, then
$$\tr\Big(
u_{i,j}\big(\rho\otimes \lambda; I\otimes X_1,\dots,I\otimes X_n\big)
\Big) = 
\tr (\rho^m)\cdot \tr\big(
u_{i,j}(\lambda;X_1,\dots,X_n)
\big).
$$
Therefore, uniqueness of $T_k$ and comparison of (b) for $\rho=\frac12I_2$ and $\rho=\frac14(\begin{smallmatrix}1&0\\0&3\end{smallmatrix})$ imply that the variable $x_0$ can appear in each $u_{i,j}$ at most once.
Finally, uniqueness and comparison of (b) for $\rho=\frac12I_2$ and $\rho=(\begin{smallmatrix}1&0\\0&0\end{smallmatrix})$ show that $x_0$ appears in each $u_{i,j}$.
Thus \eqref{e:expand} becomes
\begin{equation}\label{e:expand1}
T_k=\sum_i\alpha_{k,i}\prod_{j}\tr(x_0u'_{i,j}) v'_i
\end{equation}
where $u'_{i,j}, v'_i$ are words in $x_1,\dots,x_n$.

By the special structure \eqref{e:expand1},
$$
T_{\ell k}(\lambda;\uX)^{\oplus \ell}
=T_{\ell k}\left( \tfrac1\ell I_\ell\otimes \lambda ,I_\ell\otimes \uX\right)
=f_{\ell k}\left( \tfrac1\ell I_\ell\otimes \lambda ,I_\ell\otimes \uX\right)
= f_k(\lambda,\uX)^{\oplus \ell} 
=T_k(\lambda,\uX)^{\oplus \ell}
$$
for all $\ell\in\N$, $k\ge d+1$ and $(\lambda,\uX)\in \cS(\R^k)\times\sbb{k}^n$.
By uniqueness of the $T_k$ we thus have $T_{\ell k}=T_k$ for all $\ell\in\N$, $k\ge d+1$, and consequently $T_k=T_{d+1}$ for all $k\ge d+1$.
The property $(b)$ ensures that 
the evaluations of $T_{d+1}$ and $T_k$ for $k\le d$ agree on $\sbb{k}^{n+1}$.
Thus
$$f=\sum_i\alpha_{d+1,i}\prod_{j}\sig(u'_{i,j}) v'_i \in\fatS$$
is the desired \ncstate polynomial.
\end{proof}

\begin{corollary}
\label{c:freean}
A sequence $(f_k)_{k\in\N}$ of polynomial maps $f_k:\cS(\R^k)\times \sbb{k}^n\to\R$ satisfies
\begin{enumerate}[\rm (a)]
    \item $f_k(O (\lambda;\uX)O^*)
    =f_k(\lambda;\uX)$ 
    for all $k\in\N$, $O\in \O_k$ and $(\lambda;\uX)\in \cS(\R^k)\times \sbb{k}^n$,
    \item $f_{\ell k}(\rho\otimes\lambda;\uX^{\oplus \ell})=f_k(\lambda;\uX)$ 
    for all $k,\ell\in\N$, $\rho\in \cS(\R^\ell)$ and $(\lambda;\uX)\in \cS(\R^k)\times \sbb{k}^n$,
    \item $\sup_k\deg f_k<\infty$,
\end{enumerate}
if and only if it is given by a \state polynomial.
\end{corollary}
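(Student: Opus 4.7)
The plan is to reduce the corollary to Proposition \ref{p:freean} by promoting the scalar-valued sequence $(f_k)$ to a matrix-valued one via $\tilde f_k(\lambda;\uX) := f_k(\lambda;\uX)\, I_k$, extracting an nc \state polynomial $F\in\fatS$ from the proposition, and then projecting $F$ to $\skinnyS$ through the canonical $\skinnyS$-linear map $\sig\colon\fatS\to\skinnyS$.

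For the $(\Leftarrow)$ direction I would first reduce to checking (a)--(c) on the algebra generators $\sig(w)$ with $w\in\mx\setminus\{1\}$, since $\skinnyS$ is a polynomial ring in these symbols. Orthogonal invariance (a) is cyclicity of the trace, as in the matrix-valued case: $\sig(w)(O(\lambda;\uX)O^*) = \tr\bigl(O\lambda O^*\cdot O w(\uX)O^*\bigr) = \tr\bigl(\lambda\cdot w(\uX)\bigr)$. The tensor fusion (b) uses $w(\uX^{\oplus\ell}) = I_\ell\otimes w(\uX)$ and the identity $\tr\bigl((\rho\otimes\lambda)(I_\ell\otimes w(\uX))\bigr) = \tr(\rho)\,\tr(\lambda\cdot w(\uX))$; the normalization $\tr(\rho)=1$ is what upgrades the matrix-valued (b) of Proposition \ref{p:freean} to the scalar-valued (b) here. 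The degree bound (c) is immediate from the fixed expression of $f$.

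For the $(\Rightarrow)$ direction I would verify that the matrix-valued sequence $\tilde f_k := f_k\cdot I_k$ satisfies the hypotheses of Proposition \ref{p:freean}. Hypothesis (a) there reads $\tilde f_k(O(\lambda;\uX)O^*) = O\tilde f_k(\lambda;\uX)O^*$, and both sides equal $f_k(\lambda;\uX)\, I_k$ by (a) of the corollary together with $O I_k O^* = I_k$. Hypothesis (b) of the proposition reads $\tilde f_{\ell k}(\rho\otimes\lambda;\uX^{\oplus\ell}) = \tilde f_k(\lambda;\uX)^{\oplus\ell}$, and both sides equal $f_k(\lambda;\uX)\, I_{\ell k}$ by (b) of the corollary. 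Hypothesis (c) is inherited. Proposition \ref{p:freean} then supplies $F\in\fatS$ with $F(\lambda;\uX) = f_k(\lambda;\uX)\, I_k$ for every $k\in\N$ and $(\lambda;\uX)\in\cS(\R^k)\times\sbb{k}^n$.

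Finally, define $f := \sig(F)\in\skinnyS$. Writing $F = \sum_{v\in\mx} F_v\, v$ with finitely many nonzero $F_v\in\skinnyS$, the definition of $\sig$ gives $f = \sum_v F_v\cdot\sig(v)$ (with $\sig(1)=1$), hence
\[
f(\lambda;\uX) \;=\; \sum_v F_v(\lambda;\uX)\,\lambda\bigl(v(\uX)\bigr) \;=\; \lambda\bigl(F(\lambda;\uX)\bigr) \;=\; \lambda\bigl(f_k(\lambda;\uX)\, I_k\bigr) \;=\; f_k(\lambda;\uX),
\]
since $\lambda(I_k)=1$. The only conceptual hurdle---and the step I would flag as the key idea rather than a routine reduction---is recognizing that $\sig(F)$ is the correct projection back into $\skinnyS$: it works precisely because each evaluation $F(\lambda;\uX)$ is already a scalar multiple of $I_k$, so applying the state $\lambda$ recovers the scalar $f_k(\lambda;\uX)$ without losing information. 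After this observation, both implications are short computations.
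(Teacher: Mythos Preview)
Your proof is correct, and the $(\Rightarrow)$ direction takes a genuinely different route from the paper's. Both arguments begin by promoting $(f_k)$ to $\tilde f_k = f_k\cdot I_k$ and invoking Proposition \ref{p:freean} to obtain $F\in\fatS$ with $F(\lambda;\uX)=f_k(\lambda;\uX)\,I_k$. From here the paper argues that $F$ itself already lies in $\skinnyS$: it adjoins a fresh variable $x_{n+1}$, observes that the commutator $[F,x_{n+1}]$ vanishes on all evaluations (since $F$ takes scalar values), and then invokes Proposition \ref{prop:id} to conclude $[F,x_{n+1}]=0$ as an \ncstate polynomial, forcing $F$ to have no free nc part. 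Your approach sidesteps this entirely: rather than proving $F\in\skinnyS$, you simply project via $f:=\sig(F)\in\skinnyS$ and verify directly that $f(\lambda;\uX)=\lambda(F(\lambda;\uX))=\lambda(f_k(\lambda;\uX)I_k)=f_k(\lambda;\uX)$. This is shorter and avoids both the extra-variable trick and the appeal to Proposition \ref{prop:id}; the paper's route, on the other hand, yields the mildly stronger conclusion that the $F$ produced by Proposition \ref{p:freean} is already a \state polynomial, not merely that some \state polynomial represents $(f_k)$.
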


\begin{proof}
Replacing $f_k$ with $\hat f_k:=f_k I_k$, Proposition \ref{p:freean}
implies $\hat f_k$ arises from an  \ncstate polynomial, say $\hat f$. 
Add a new variable $x_{n+1}$ and form the commutator
$g:=[\hat f,x_{n+1}]$. By assumption the image of $\hat f$
consists only of scalar matrices, so $g$ always evaluates to $0$ on all tuples $(\lambda;\uX)\in \cS(\R^k)\times \sbb{k}^{n+1}$.
Thus by Proposition \ref{prop:id}, $g=0$. Hence $\hat f$ cannot 
have any ``free'' nc variables not bound by $\sig$, i.e., 
$\hat f\in\skinnyS$.
\end{proof}

\section{Hilbert's 17th problem for state polynomials}
\label{sec:H17}

In this section we present a \state analog of Artin's solution to the celebrated Hilbert's 17th problem.
Let $\Omega\subset\skinnyS$ be the preordering \cite[Section 2.1]{marshallbook} generated by $\{\sig(hh^\star)\colon h\in \fatS\}$. 
That is, $\Omega$ is the smallest subset of $\skinnyS$ closed
under sums and products that contains all squares and $\{\sig(hh^\star)\colon h\in \fatS\}$.
Note that if $a\in\skinnyS$ then $a^2=aa^\star\sig(1)=\sig(aa^\star)$ since $a=a^\star$ and $\sig$ is a unital $\skinnyS$-linear map.
Hence $\Omega$ is the set of all sums of products of elements of the form $\sig(hh^\star)$ for $h\in\fatS$. 
Clearly, \state polynomials in $\Omega$ are nonnegative on $\cD_\emptyset$ and $\cD_\emptyset^\infty$. 
Theorem \ref{t:h17} below shows that every \state polynomial, nonnegative on $\cD_\emptyset$, is a quotient of elements in $\Omega$.
This is in stark contrast with trace polynomials: while the trace analog of Theorem \ref{t:h17} only holds for one matrix variable \cite[Corollary 3.8]{KPV}, it fails in general \cite[Proposition 6.4]{KSV}. We can pinpoint the crucial difference more precisely: the proof of Theorem \ref{t:h17} utilizes the Positivstellensatz in a free $\star$-algebra \cite{hkmConvex}; on the other hand, the tracial analog of this step would require a positive resolution of the renowned Connes' embedding conjecture, which has been recently refuted \cite{CECfalse}.

For $d\in\N$ let $\mx_d$ denote the set of words with length at most $d$, and $D=|\mx_d|=\frac{n^{d+1}-1}{n-1}$.
Let $H_d=(\sig(u v^\star))_{u,v\in\mx_d}\in\skinnyS^{D\times D}$.
The following is a well-known statement adapted to the notation of this paper.

\begin{lemma}\label{l:pd}
For each $d$ there exists $(\lambda;\uX)\in\vD_\emptyset$
such that $H_d(\lambda;\uX)$ is positive definite.
\end{lemma}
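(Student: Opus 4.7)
The plan is to construct an explicit point $(\lambda;\uX) \in \vD_\emptyset$ at which the vectors $\{w(\uX)\xi : w \in \mx_d\}$ are linearly independent in the underlying finite-dimensional Hilbert space, where $\xi$ is the unit vector defining the vector state $\lambda$. Once this is done, since the $X_i$ are self-adjoint we have $u(\uX)^* = u^\star(\uX)$, and hence the $(u,v)$-entry of $H_d(\lambda;\uX)$ equals $\lambda(u(\uX)v(\uX)^*) = \langle v^\star(\uX)\xi,\, u^\star(\uX)\xi\rangle$. Therefore $H_d(\lambda;\uX)$ is, up to the bijection $w\mapsto w^\star$ of $\mx_d$, the Gram matrix of the linearly independent family above, and so positive definite.

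For the construction I would set $D := |\mx_d|$, take $\cH = \R^D$ with orthonormal basis $\{e_w\}_{w \in \mx_d}$, and introduce the truncated left creation operators $L_i \in \cB(\cH)$ defined by $L_i e_w = e_{x_i w}$ for $|w| < d$ and $L_i e_w = 0$ for $|w| = d$. A direct check gives $L_i^* e_{x_i w} = e_w$ for $|w| < d$ and $L_i^* e_u = 0$ whenever $u$ does not start with the letter $x_i$. The self-adjoint operators $X_i := L_i + L_i^* \in \sbb{D}$, together with the unit vector $\xi := e_1$ (inducing the vector state $\lambda(Y) = \langle Y\xi,\xi\rangle$), are then the candidate evaluation point in $\vD_\emptyset$.

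The core technical step is to establish the triangular expansion
\[
w(\uX)\xi \;=\; e_w \;+\; \sum_{|w'| < k} c_{w,w'}\, e_{w'}
\]
for every $w = x_{i_1}\cdots x_{i_k} \in \mx_d$, with real coefficients $c_{w,w'}$. I would prove this by expanding $X_{i_1}\cdots X_{i_k}$ into $2^k$ monomials in the $L_{i_j}$ and $L_{i_j}^*$ and tracking each monomial applied to $\xi = e_1$ as a walk in $\mx_d$: an $L$ prepends a letter, and an $L^*$ either strips the leading letter or kills the term. The all-$L$ monomial produces $e_w$ (the truncation at length $d$ is harmless for it, since the walk goes from length $0$ up to length $k \le d$), whereas any monomial containing at least one $L^*$ produces either $0$ or a basis vector indexed by a word of length at most $k-2$. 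Ordering $\mx_d$ by increasing word length, the transition matrix from $\{e_w\}_{w \in \mx_d}$ to $\{w(\uX)\xi\}_{w \in \mx_d}$ is then unitriangular, so the latter family is linearly independent. I expect the one delicate piece of the proof to be precisely this bookkeeping — checking that the truncation at length $d$ never spoils the main term and that every nonzero non-main monomial drops the net length by at least two; the rest is routine.
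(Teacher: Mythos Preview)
Your proof is correct. The truncated free-Fock construction does exactly what you claim: the all-$L$ monomial applied to $e_1$ walks up from length $0$ to length $k\le d$, never hitting the truncation, and yields $e_w$; any monomial containing an $L^*$ either dies (when it hits length $0$ or strips the wrong letter) or lands on a word of length $k-2p\le k-2$. The transition matrix, in a length-first ordering of $\mx_d$, is therefore unitriangular, and the Gram-matrix identification with $H_d(\lambda;\uX)$ is correct once one uses that $H_d$ is symmetric in a real Hilbert space.

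Your route is genuinely different from the paper's. The paper's proof is a two-line citation: it takes a strictly positive functional on $\R\mx_{2d+2}$ from \cite[Lemma~3.2]{hkmConvex} and then invokes the finite-dimensional GNS construction \cite[Proposition~2.5]{hkmConvex} to realize it as $(\lambda;\uX)\in\vD_\emptyset$. What the paper buys is brevity and no bookkeeping; what you buy is a self-contained, explicit construction (the truncated full Fock space with $X_i=L_i+L_i^*$ and cyclic vector $e_1$) that exhibits the point in $\vD_\emptyset$ concretely, with $\dim\cH=|\mx_d|=D$. Your argument is in fact the standard way one would \emph{prove} the cited lemma, so the two approaches converge behind the scenes; yours simply unpacks the black box.
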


\begin{proof}
By \cite[Lemma 3.2]{hkmConvex}
there exists a unital $\star$-functional $L:\R\mx_{2d+2}\to\R$ such that $L(hh^\star)>0$ for all nonzero $h\in\R\mx_{d+1}$.
By \cite[Proposition 2.5]{hkmConvex}, there is $(\lambda;\uX)\in\vD_\emptyset$ such that $L(f)=\lambda(f(\uX))$ for all $f\in\R\mx_{2d}$. 
Then $H_d(\lambda;\uX)$ is positive definite by construction.
\end{proof}

\begin{proposition}\label{p:psdhank}
Every principal minor of $H_d$ is a quotient of two elements in $\Omega$.
\end{proposition}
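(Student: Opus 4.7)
The plan is to perform a Gram--Schmidt-type orthogonalization of the words in a principal subset $S$ over the commutative ring $\skinnyS$, cleverly avoiding divisions by working with bordered determinants. Any principal minor of $H_d$ has the form $\det H_S$ for some $S=\{w_1,\ldots,w_k\}\subseteq\mx_d$ (with a fixed ordering). Set $d_i:=\det H_{\{w_1,\ldots,w_i\}}$, so that $d_0=1$ and $d_k=\det H_S$.

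For each $i$ I will define $\tilde\xi_i\in\fatS$ by expanding along the last column of the ``bordered'' determinant
$$\tilde\xi_i := \det\begin{pmatrix} \sig(w_1 w_1^\star) & \cdots & \sig(w_1 w_{i-1}^\star) & w_1 \\ \vdots & & \vdots & \vdots \\ \sig(w_i w_1^\star) & \cdots & \sig(w_i w_{i-1}^\star) & w_i \end{pmatrix}.$$
Entries off the last column lie in the commutative subalgebra $\skinnyS$, which is central in $\fatS$, so this cofactor expansion yields a well-defined $\skinnyS$-linear combination of $w_1,\ldots,w_i$ whose coefficient in front of $w_i$ is exactly $d_{i-1}$. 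Two identities then need to be checked: (i) $\sig(\tilde\xi_i w_l^\star)=0$ for $l<i$ and $\sig(\tilde\xi_i w_i^\star)=d_i$; and (ii) $\sig(\tilde\xi_i\tilde\xi_i^\star)=d_{i-1}d_i$. The first follows because $\sig$-ing the product $\tilde\xi_i w_l^\star$ amounts to substituting the column $(\sig(w_j w_l^\star))_j$ into the last column of the bordered matrix, producing a determinant with two equal columns when $l<i$, and equal to $d_i$ when $l=i$. The second follows from (i) once one notes that the cofactor coefficients live in the $\star$-fixed subalgebra $\skinnyS$, so $\tilde\xi_i^\star$ is obtained simply by replacing each $w_l$ with $w_l^\star$, and only the $l=i$ term survives in the resulting expansion.

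Identity (ii) yields $d_{i-1}d_i=\sig(\tilde\xi_i\tilde\xi_i^\star)\in\Omega$ for every $i$. Taking the product over $i$ telescopes to
$$d_k\cdot\prod_{i=1}^{k-1}d_i^{\,2}\;=\;\prod_{i=1}^k (d_{i-1}d_i)\;=\;\prod_{i=1}^k \sig(\tilde\xi_i\tilde\xi_i^\star)\;\in\;\Omega.$$
Since each $d_i\in\skinnyS$ is $\star$-fixed, $d_i^{\,2}=\sig(d_i d_i^\star)\in\Omega$, so the scalar factor $\prod_{i=1}^{k-1}d_i^{\,2}$ also lies in $\Omega$. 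Rearranging exhibits $d_k$ as a quotient of two elements of $\Omega$ in the fraction field of the polynomial ring $\skinnyS$.

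The only delicate point is that this quotient must make sense, i.e., the denominator must be nonzero in $\skinnyS$. Here I invoke Lemma~\ref{l:pd}: at a point $(\lambda;\uX)\in\vD_\emptyset$ where $H_d(\lambda;\uX)$ is positive definite, every principal submatrix $H_{\{w_1,\ldots,w_i\}}$ evaluates to a positive definite matrix, so $d_i(\lambda;\uX)>0$ and in particular $d_i\neq 0$ in $\skinnyS$. The main conceptual obstacle---executing Gram--Schmidt over $\skinnyS$ without having to invert the partial moment-matrix determinants---is precisely what the bordered-determinant construction of $\tilde\xi_i$ sidesteps; once that construction is in place, the remaining manipulations reduce to elementary multilinear algebra.
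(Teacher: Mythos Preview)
Your argument is correct. The bordered--determinant version of Gram--Schmidt is exactly the right device: the identity $\sig(\tilde\xi_i\tilde\xi_i^\star)=d_{i-1}d_i$ follows cleanly from the cofactor calculation you describe, the telescoping product is fine, and Lemma~\ref{l:pd} guarantees the denominator $\prod_{i<k}d_i^{\,2}$ is not the zero polynomial.

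The paper takes a genuinely different route. Rather than working directly in $\skinnyS$, it passes to a \emph{generic} symmetric matrix $\Xi$ over a polynomial ring, invokes a Positivstellensatz for trace polynomials of a single generic matrix (from \cite{klep2018positive}) to obtain an identity of the shape $qm=m^{2k}+p$ with $p,q$ in a preordering generated by $\tr(h(\Xi)^2)$ and $\tr(h(\Xi)\,\Xi\,h(\Xi))$, and then specializes $\Xi\mapsto H_d$, using the observation that $\tr(h(H_d)\,H_d\,h(H_d))=\sum_j\sig\big((h(H_d)\w)_j(h(H_d)\w)_j^\star\big)\in\Omega$. Your approach is more elementary and fully self--contained: it yields an explicit quotient $d_k=\big(\prod_i\sig(\tilde\xi_i\tilde\xi_i^\star)\big)\big/\big(\prod_{i<k}d_i^{\,2}\big)$ with no appeal to external trace--polynomial results, and the certificates it produces are visibly built from $\sig(hh^\star)$ for $h\in\fatS$. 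The paper's detour through generic--matrix invariant theory buys modularity (it plugs into the machinery of \cite{klep2018positive}), but for this specific proposition your direct construction is shorter and arguably more transparent. Interestingly, the paper's Example~\ref{e:complicated} illustrates a certificate for a $3\times3$ principal minor via the characteristic polynomial, which is closer in spirit to your method than to the proof the paper actually gives.
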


\begin{proof}
Let $\Xi$ be the generic $D\times D$ symmetric matrix; that is, the entries of $\Xi$ are commuting indeterminates, related only by $\Xi$ being symmetric. 
Let $A$ be the real polynomial algebra generated by the entries of $\Xi$, and $T\subset A$ its real subalgebra generated by $\{\tr(\Xi^j)\colon 1\le j\le D \}$. Within $A^{D\times D}$ let $T[\Xi]$ denote the real subalgebra generated by the matrix $\Xi$ and multiples of identity $T\cdot I$.
Let $P$ be the preordering in $T$ generated by
$$\{
\tr(h(\Xi)^2),\,\tr(h(\Xi)\cdot\Xi\cdot h(\Xi))\colon h\in T[\Xi] \}.$$
Let $m\in T$ be an arbitrary principal minor of $\Xi$.
By \cite[Lemma 4.1 and Theorem 4.13]{klep2018positive} there exist $p,q\in P$ and $k\in\N$ such that 
\begin{equation}\label{e:pos}
qm=m^{2k}+p.
\end{equation}
Let $\w$ be the vector of words in $\mx_d$ (ordered degree-lexicographically); then $H_d$ is obtained by applying $\sig$ to $\w\w^\star \in\RX^{D\times D}$ entry-wise. If $h\in T[\Xi]\subset A^{D\times D}$, then $h(H_d)\in \skinnyS^{D\times D}$ and $h(H_d)\w\in \fatS^D$; moreover,
\begin{equation}\label{e:hanksquare}
\begin{split}
&\tr(h(H_d)^2) \in \Omega,\\
&\tr\left(h(H_d)\,H_d\,h(H_d)\right) =
\sum_{j=1}^D \sig\big(\left(h(H_d)\w\right)_j\left(h(H_d)\w\right)_j^\star\big) \in\Omega.
\end{split}
\end{equation}
If $m',p',q'$ are obtained from $m,p,q$ by replacing $\Xi$ with $H_d$, then $p',q'\in \Omega$ by \eqref{e:hanksquare}. Furthermore, $q'\neq0$ since the right-hand side of \eqref{e:pos} is strictly positive when evaluated at a positive definite \state evaluation of $H_d$, which exists by Lemma \ref{l:pd}. Therefore $m'$, a principal minor of $H_d$, is a quotient of elements in $\Omega$.
\end{proof}

The following is a solution to Hilbert’s 17th problem for \state polynomials.

\begin{theorem}\label{t:h17}
The following are equivalent for $a\in\skinnyS$:
\begin{enumerate}[\rm (i)]
\item $a(\lambda;\uX)\ge0$ for all $\uX\in\sbb{K}^n$ and vector states $\lambda\in \cS(\R^K)$, where $K=\frac{1}{n-1}(n^{\lceil\frac{1+\deg a}{2}\rceil}-1)$;
\item $a$ is nonnegative on $\cD_\emptyset^\infty$;
\item $a$ is a quotient of two elements in $\Omega$.
\end{enumerate}
\end{theorem}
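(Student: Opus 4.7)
The strategy is to prove the cyclic chain (iii) $\Rightarrow$ (ii) $\Rightarrow$ (i) $\Rightarrow$ (iii), with essentially all the work in the last implication. For (iii) $\Rightarrow$ (ii), if $qa=p$ with $p,q\in\Omega$ and $q\ne 0$, then both $p$ and $q$ are nonnegative on $\cD_\emptyset^\infty$ by the very definition of $\Omega$, so $a\ge 0$ wherever $q>0$; Proposition \ref{prop:id} shows that $q$ cannot vanish identically on matrix evaluations, and a density/continuity argument in the strong operator topology extends $a\ge 0$ to all of $\cD_\emptyset^\infty$. The implication (ii) $\Rightarrow$ (i) is immediate.

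For (i) $\Rightarrow$ (iii), set $d=\lceil(1+\deg a)/2\rceil$ and $D=|\mx_d|=K$; note $\deg a\le 2d-1$. I would treat the state symbols $\sig(w)$, $w\in\mx_{2d}$, as commuting unknowns $z_w$ in a finitely generated polynomial ring $R=\R[z_w]/(z_w-z_{w^\star},\,z_1-1)$, view $a$ as an element of $R$, and consider the generic Hankel matrix $\tilde H_d=(z_{uv^\star})_{u,v\in\mx_d}\in R^{D\times D}$. The central geometric claim is a truncated noncommutative moment realization theorem: every real point $\xi$ of $R$ with $\tilde H_d(\xi)\succeq 0$ arises as $\xi_w=\sig(w)(\lambda;\uX)$ for some $(\lambda;\uX)\in\cS(\R^K)\times\sbb{K}^n$ with $\lambda$ a vector state. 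I would prove this via a Gelfand--Naimark--Segal construction on the inner-product space given by $\tilde H_d(\xi)$, of dimension at most $D=K$, supplemented by a flat-extension step that realizes left multiplication by $x_i$ as symmetric operators $X_i$. Combined with hypothesis (i), this yields $a\ge 0$ on the semialgebraic set $S=\{\tilde H_d\succeq 0\}$ inside the real affine space associated to $R$.

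Having reduced the problem to a commutative Positivstellensatz, I would apply the classical Krivine--Stengle theorem to $a$ nonnegative on $S$ (cut out by the principal minors of $\tilde H_d$) to obtain
\[qa=a^{2k}+p\]
with $k\in\N$ and $p,q$ in the preordering generated by those principal minors. Pushing the identity through the quotient map $R\to\skinnyS$, $z_w\mapsto\sig(w)$, and invoking Proposition \ref{p:psdhank} (which writes every principal minor of $H_d$ as a quotient of elements of $\Omega$), I would clear a common $\Omega$-denominator to obtain $\bar q a=a^{2k}+\bar p$ in $\skinnyS$ with $\bar p,\bar q\in\Omega$. Since $a^{2k}=\sig((a^k)(a^k)^\star)\in\Omega$ and Proposition \ref{prop:id} guarantees $\bar q\ne 0$, the resulting identity $a=(a^{2k}+\bar p)/\bar q$ exhibits $a$ as a quotient of two elements of $\Omega$.

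The main obstacle I anticipate is the truncated noncommutative moment realization step. The naive GNS space attached to $\tilde H_d(\xi)$ has dimension at most $D=K$, but left multiplication by $x_i$ takes degree-$d$ basis vectors to degree $d+1$ and hence does not immediately descend to this quotient. A Curto--Fialkow style flat extension---extending $\xi$ to a positive semidefinite Hankel one level higher, of the same rank as $\tilde H_d(\xi)$---is needed to define bounded symmetric operators of size at most $K$ realizing the given moments. It is precisely this extension step that is responsible for the exact dimension $K=|\mx_d|$ appearing in statement (i).
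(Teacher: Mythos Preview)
Your overall architecture matches the paper's: the same cyclic chain, the same reduction to a commutative Positivstellensatz on the Hankel positivity locus via Proposition~\ref{p:psdhank}, and the same idea that the key step is realizing a positive semidefinite Hankel as actual state moments. But two points need correction.

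First, a dimension miscount: with $d=\lceil(1+\deg a)/2\rceil$ one has $K=\tfrac{n^d-1}{n-1}=|\mx_{d-1}|$, \emph{not} $|\mx_d|$. So your GNS space built from $\tilde H_d(\xi)$, which has dimension at most $|\mx_d|$, is already one level too large to recover the exact constant in (i).

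Second, and more substantively, the flat-extension step you flag as ``the main obstacle'' is in fact a genuine gap: noncommutative Curto--Fialkow flat extensions need not exist for an arbitrary positive semidefinite truncated Hankel, so you cannot in general define the multiplication operators that way. The paper sidesteps this entirely. Instead of flatly extending a possibly degenerate Hankel, it \emph{perturbs} toward strict positivity: Lemma~\ref{l:pd} supplies a functional $L'$ with $L'(gg^\star)>0$ for all nonzero $g\in\R\mx_d$, and one forms $L_\varepsilon=(1-\varepsilon)L+\varepsilon L'$. For strictly positive $L_\varepsilon$, \cite[Proposition~2.5]{hkmConvex} gives directly a vector state and symmetric matrices of size $K=|\mx_{d-1}|$ matching all moments up to degree $2d-1$ (which suffices since $\deg a\le 2d-1$). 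Continuity in $\varepsilon$ then pushes $a<0$ from the limiting functional to some $L_\varepsilon$. This perturbation-plus-limit device is what makes the argument go through with the sharp $K$, and it is the idea missing from your proposal.

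For (iii)$\Rightarrow$(ii), your sketch is morally right but the phrase ``density/continuity in the strong operator topology'' is not how the paper does it: the paper takes an explicit one-parameter algebraic perturbation $\lambda_\varepsilon=(1-\varepsilon)\lambda+\varepsilon\lambda'$, $\uX_\varepsilon=(1-\varepsilon)\uX+\varepsilon\uX'$ toward a finite-dimensional point where $q\ne0$, and uses that $q(\lambda_\varepsilon;\uX_\varepsilon)$ is a nonzero polynomial in $\varepsilon$.
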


\begin{proof}
(ii)$\Rightarrow$(i) Clear.

(i)$\Rightarrow$(iii) Suppose $a$ is not a quotient of elements in $\Omega$.
Let $d=\lceil\frac{1+\deg a}{2}\rceil$ and $R=\R[\sig(w)\colon w\in\mx_{2d}\setminus\{1\}]$.
Then $a\in R$, and $R$ is a finitely generated polynomial ring.
Let $M\subset R$ be the set of all principal minors of $H_d$. 
By Proposition \ref{p:psdhank}, $a$ is not a quotient of elements in the preordering in $R$ generated by $M$.
By the Krivine–Stengle Positivstellensatz 
\cite[Theorem 2.2.1]{marshallbook}
there exists a homomorphism $\varphi:R\to\R$ such that $\varphi(a)<0$ and $\varphi(M)\subset \R_{\ge0}$. Applying $\varphi$ entry-wise to $H_d$ therefore results in a positive semidefinite matrix.
Define $L:\R\mx_{2d}\to\R$ as $L(f)=\varphi(\sig(f))$. Then $L$ is a unital $\star$-functional, and $L(gg^\star)\ge0$ for $g\in\R\mx_d$. 
By Lemma \ref{l:pd} there exists a unital $\star$-functional $L':\R\mx_{2d}\to\R$ such that $L'(gg^\star)>0$ for nonzero $g\in\R\mx_d$. For every $\varepsilon\in(0,1)$ denote the unital $\star$-functional $L_\varepsilon = (1-\varepsilon)L+\varepsilon L'$; then $L_\varepsilon(gg^\star)>0$ for all nonzero $g\in\R\mx_d$.
By \cite[Proposition 2.5]{hkmConvex} there exists $(\lambda_\varepsilon;\uX_\varepsilon)\in\cS(\R^K)\times \sbb{K}^n$, with $K=\dim\R\mx_{d-1}=\frac{n^d-1}{n-1}$ and $\lambda_\varepsilon$ a vector state, such that $L_\varepsilon(f)=\lambda_\varepsilon (f(\uX_\varepsilon))$ for all $f\in\R\mx_{2d-1}$. 
Then 
$$\lim_{\varepsilon\to 0}a(\lambda_\varepsilon;\uX_\varepsilon)=
a(\lambda_0;\uX_0)=\varphi(a)<0$$
and so $a(\lambda_\varepsilon;\uX_\varepsilon)<0$ for some $\varepsilon\in (0,1)$.

(iii)$\Rightarrow$(ii) Let $a=p/q$ for some $p,q\in \Omega$ with $q\neq0$.
Clearly $p$ and $q$ are nonnegative on $\cD_\emptyset^\infty$.
Let $\uX$ be a tuple of self-adjoint bounded operators on an infinite-dimensional separable Hilbert space $\cH$, and let $\lambda$ be a state on $\cB(\cH)$.
Suppose $a(\lambda;\uX)<0$. Since $q\neq0$, by Proposition \ref{prop:id} there exist $\uY\in\sbb{k}^n$ and a $\mu\in\cS(\R^k)$ such that $q(\mu;\uY)\neq0$. 
Let $\iota:\R^k\to\cH$ be an isometry, and define $X_j'=\iota\circ Y_j \circ \iota^*$ and $\lambda'=\mu \circ \iota^*$.
Then $q(\lambda';\uX')\neq0$.
For $\varepsilon\in (0,1)$ set $\lambda_\varepsilon=(1-\varepsilon)\lambda+\varepsilon \lambda'$ and
$\uX_\varepsilon=(1-\varepsilon)\uX+\varepsilon \uX'$.
Since $q(\lambda_1;\uX_1)\neq0$ and $q$ is a polynomial in state symbols, we have $q(\lambda_\varepsilon;\uX_\varepsilon)\neq0$ for all but finitely many $\varepsilon\in (0,1)$. On the other hand,
$\lim_{\varepsilon\to0}a(\lambda_\varepsilon;\uX_\varepsilon)
=a(\lambda_0;\uX_0)<0$. Therefore there exists $\varepsilon\in (0,1)$ such that $q(\lambda_\varepsilon;\uX_\varepsilon)\neq0$ and $a(\lambda_\varepsilon;\uX_\varepsilon)<0$, so
$$0>a(\lambda_\varepsilon;\uX_\varepsilon)
=\frac{p(\lambda_\varepsilon;\uX_\varepsilon)}{q(\lambda_\varepsilon;\uX_\varepsilon)} \ge0,$$
a contradiction.
\end{proof}

Theorem \ref{t:h17} shows that {\it every} state polynomial inequality valid on all matrices is an algebraic consequence of states on hermitian squares, i.e., it is built from expressions $\sig(hh^\star)$ using addition, multiplication and inversion. 
In particular, a state polynomial inequality valid on all matrices is automatically valid on all operators.
A well-known example is the Cauchy-Schwarz inequality, which admits an algebraic certificate
\begin{equation}\label{e:cs_certificate}
\sig(x_1^2)\sig(x_2^2)-\sig(x_1x_2)^2=
\frac{\sig\Big(
\big(\sig(x_1^2)x_2-\sig(x_1x_2)x_1\big)^2
\Big)}{\sig(x_1^2)}.
\end{equation}
Alternatively, one can recognize $\sig(x_1^2)\sig(x_2^2)-\sig(x_1x_2)^2$ as the $2\times 2$ principal minor of $H_1$ indexed by $x_1,x_2$.

\begin{example}\label{e:complicated}
A less evident example of a globally nonnegative \state polynomial is
\begin{align*}a=\ &\big(
\sig(x_1^2)\sig(x_2^2)-\sig(x_1x_2)^2
\big)\sig(x_2x_1^2x_2)
+2\sig(x_1x_2)\sig(x_2x_1x_2)\sig(x_1^2x_2) \\
&-\sig(x_1^2)\sig(x_2x_1x_2)^2
-\sig(x_2^2)\sig(x_1^2x_2)^2.
\end{align*}
Let us give two arguments for nonnegativity of $a$.
Firstly, consider a principal submatrix of $H_2$,
$$s= \begin{pmatrix}
\sig(x_1^2) & \sig(x_1x_2) & \sig(x_1^2x_2) \\
\sig(x_1x_2) & \sig(x_2^2) & \sig(x_2x_1x_2) \\
\sig(x_1^2x_2) & \sig(x_2x_1x_2) & \sig(x_2x_1^2x_2)
\end{pmatrix}\in\skinnyS^{3\times 3}.$$
A calculation shows that $a=\det(s)$. Since $H_2(\lambda,\uX)$ is positive semidefinite for every $(\lambda;\uX)\in\vD_\emptyset$, it follows that $a$ is nonnegative. Secondly, we sketch how to directly obtain a certificate for nonnegativity of $a$ in terms of Theorem \ref{t:h17}. Let
$$\sigma_2 = 
\Big(\sig(x_1^2)\sig(x_2x_1^2x_2)-\sig(x_1^2x_2)^2\Big)
+\Big(\sig(x_1^2)\sig(x_2^2)-\sig(x_1x_2)^2\Big)
+\Big(\sig(x_2^2)\sig(x_2x_1^2x_2)-\sig(x_2x_1x_2)^2\Big)$$
which is one of the coefficients of the characteristic polynomial of $s$. Note that $\sigma_2$ is a sum of three terms obtained from \eqref{e:cs_certificate} by substitution, and therefore in particular a sum of three quotients of elements in $\Omega$.
If $h_1,h_2,h_3\in\fatS$ are given by
$$
\begin{pmatrix}
h_1 \\ h_2 \\ h_3
\end{pmatrix}=
(s^2-\tr(s)s+\sigma_2 I_3)\cdot\begin{pmatrix}
x_1 \\ x_2 \\ x_2x_1
\end{pmatrix} \in\fatS^3,
$$
then a direct (but tedious) calculation shows that
$$a=\frac{\sig(h_1h_1^\star)+\sig(h_2h_2^\star)+\sig(h_3h_3^\star)}{\sigma_2},
$$
so $a$ is a quotient of elements in $\Omega$, and therefore nonnegative.
The choice of $h_j$ is inspired by the proof of Proposition \ref{p:psdhank} and \cite[Example 6.1]{klep2018positive}.
\end{example}

\section{Archimedean Positivstellensatz for state polynomials}
\label{sec:noncyclic}

In this section we give a version of Putinar's Positivstellensatz \cite{Putinar1993positive} for \state polynomials subject to archimedean constraints, Theorem \ref{thm:nocyc}, 
which is later applied to \state polynomial optimization in Section \ref{sec:hierarchy}.
First we address which functionals $\RX\to\R$ are given by states and evaluations on tuples of bounded operators.
The following is a variant of the well-known Gelfand-Naimark-Segal (GNS) construction \cite[Section I.9]{Tak02}.

\begin{proposition}
\label{prop:moment}
Let $L: \RX\to \R$ be a unital $\star$-functional. If
\begin{enumerate}[\rm (a)]
	\item $L(pp^\star)\ge0$ for all $p\in \RX$, and
	\item there is $N>0$ such that $L(p(N-x_1^2-\cdots-x_n^2)p^\star)\ge0$ for all $p\in \RX$,
\end{enumerate}
then there exist a vector state $\lambda\in\cS(\cH)$ and a tuple of self-adjoint operators $\uX\in\cB(\cH)^n$ such that $L(p)=\lambda(p(\uX))$ for all $p\in \RX$.
\end{proposition}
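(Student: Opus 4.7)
My plan is to carry out a version of the Gelfand–Naimark–Segal construction on $\RX$ itself, using $L$ to manufacture the Hilbert space, the operators, and the vector state simultaneously.

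First, I would define the bilinear form $\langle p,q\rangle_L := L(q^\star p)$ on $\RX$. Hypothesis (a), applied to $p$ and to $p^\star$, gives $\langle p,p\rangle_L = L(p^\star p)\ge0$, so this is a positive semidefinite inner product. Let $\cN:=\{p\in\RX : \langle p,p\rangle_L=0\}$, which is a subspace by Cauchy–Schwarz; I would then pass to the quotient $\RX/\cN$, which carries a genuine inner product, and let $\cH_0$ be its Hilbert space completion. Because $\RX$ is countably generated, $\cH_0$ is separable.

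Next I would define candidate operators $X_j$ on $\RX/\cN$ by $X_j[p]:=[x_j p]$. The key point is that condition (b) forces boundedness: replacing $p$ by $p^\star$ in (b) (using that $L$ is a $\star$-functional and that $N-x_1^2-\cdots-x_n^2$ is symmetric) gives $L\bigl(p^\star(N-\sum_i x_i^2)p\bigr)\ge 0$, hence
\[
\sum_{i=1}^n \|X_i[p]\|_L^2 \;=\; \sum_{i=1}^n L(p^\star x_i^2 p)\;\le\; N\,L(p^\star p)\;=\;N\|[p]\|_L^2.
\]
Since each summand is nonnegative by (a), $\|X_i[p]\|_L\le\sqrt{N}\,\|[p]\|_L$, so $X_j$ is well-defined on $\RX/\cN$ (it descends through $\cN$) and extends by continuity to a bounded operator on $\cH_0$. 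Symmetry is immediate: $\langle X_j[p],[q]\rangle_L=L(q^\star x_j p)=L((x_jq)^\star p)=\langle [p],X_j[q]\rangle_L$ since $x_j^\star=x_j$. A bounded symmetric operator on a real Hilbert space is self-adjoint, so each $X_j\in\cB(\cH_0)$ is self-adjoint, as required.

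Finally, I would set $v:=[1]\in\cH_0$; since $L$ is unital, $\|v\|_L^2=L(1)=1$, so $v$ is a unit vector. The resulting vector state $\lambda_0(Y):=\langle Yv,v\rangle_L$ on $\cB(\cH_0)$ satisfies, for every word $w\in\mx$,
\[
\lambda_0(w(\underline{X}))\;=\;\langle w(\underline{X})[1],[1]\rangle_L\;=\;\langle[w],[1]\rangle_L\;=\;L(w),
\]
and by $\R$-linearity $\lambda_0(p(\underline{X}))=L(p)$ for all $p\in\RX$. To land inside the fixed separable infinite-dimensional $\cH$ of the proposition, I would choose an isometric embedding $\iota:\cH_0\hookrightarrow\cH$, let $P=\iota\iota^*$ be the orthogonal projection, and set $X_j':=\iota X_j\iota^*\in\cB(\cH)$ (self-adjoint since $X_j$ is) and $\lambda(Y):=\langle Y\iota v,\iota v\rangle$; this is the desired vector state and tuple of self-adjoint bounded operators.

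The argument is essentially routine once one realises that (b) controls the norm of left-multiplication by each $x_j$. The only mildly delicate point I foresee is bookkeeping the $\star$-involution carefully in the passage from (b) (a condition on $L(p\,q\,p^\star)$) to the boundedness estimate (a bound on $L(p^\star\,q\,p)$); this is dispatched by applying (b) to $p^\star$ together with $L\circ\star=L$, as above. Embedding $\cH_0$ into the prescribed $\cH$ is cosmetic and requires no new ideas.
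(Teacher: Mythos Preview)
Your argument is correct and is exactly the GNS construction that the paper invokes; the paper's own proof is a one-line citation of \cite[Theorem 1.27]{burgdorf16}, which carries out precisely this construction for an archimedean quadratic module in $\RX$. Your write-up simply unpacks that reference, including the observation that condition (b) bounds the left-multiplication operators and the cosmetic embedding of $\cH_0$ into the fixed $\cH$.
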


\begin{proof}
Apply \cite[Theorem 1.27]{burgdorf16} to the quadratic module in $\RX$ generated by $\{N-x_1^2-\cdots-x_n^2\}$.
\end{proof}

\begin{remark}\label{r:nontr}
It is easy to see that (b) in Proposition \ref{prop:moment} 
can be replaced by 
\begin{enumerate}
\item[(b')] there is $N>0$ such that $L(ww^\star)\le N^{|w|}$ for all $w\in\mx$.
\end{enumerate}
On the other hand, it cannot be replaced by
\begin{enumerate}
\item[(b'')] there is $N > 0$ such that $L(x_j^{2k})\le N^{k}$ for all $j=1,\dots,n$ and $k\in\N$,
\end{enumerate}
as in the tracial setup \cite[Proposition 3.2]{KMV}. Namely, for non-tracial states not all mixed moments can be bounded with univariate moments. For example,
let
$$v=\left(\begin{smallmatrix}
1 \\ 0 \\ 0\\ 0
\end{smallmatrix}\right),
\quad
X_1=\left(\begin{smallmatrix}
0&1&0&0 \\
1&0&0&0 \\
0&0&\gamma&0 \\
0&0&0&\gamma \\
\end{smallmatrix}\right),
\quad
X_2=\left(\begin{smallmatrix}
	1&0&0&0 \\
	0&0&0&\gamma \\
	0&0&1&0 \\
	0&\gamma&0&0 \\
\end{smallmatrix}\right),
\quad
X_3=\left(\begin{smallmatrix}
	0&0&0&1 \\
	0&\gamma&0&0 \\
	0&0&\gamma&0 \\
	1&0&0&0 \\
\end{smallmatrix}\right).
$$
Then $\langle X_j^{2k}v,v\rangle=1$ for all $j,k$ but $\langle X_1X_2X_3v,v\rangle =\gamma$ can be arbitrarily large.
\end{remark}

Due to Proposition \ref{prop:moment}, we focus on \state polynomial positivity subject to balanced constraint sets with the following property. 
We say that
$C\subseteq \fatS$ is {\it algebraically bounded} if there is
$N>0$ such that
$$N-x_1^2-\cdots-x_n^2 = \sum_ip_ic_ip_i^\star$$
for some $c_i\in \{1\}\cup C\cap\RX$ and $p_i\in\RX$
(in other words, $C\cap\RX$ generates an archimedean quadratic module in $\RX$).

Next we turn to a notion from real algebra \cite{marshallbook,sche}.
A subset $\cM \subseteq \skinnyS$ is called a quadratic module if $1 \in \cM$, $\cM + \cM \subseteq \cM$ and $a^2 \cM$ for all $a \in \skinnyS$.
For $M\subset \skinnyS$ let $\QM(M)$ denote the quadratic module generated by $M$.
Given a quadratic module $\cM \subseteq \skinnyS$ that is archimedean (i.e., for each $f\in\skinnyS$ there is $m>0$ such that $m\pm f\in\cM$), we consider the real points of the real spectrum $\operatorname{Sper}_{\cM}\skinnyS$, namely the set $\chi_{\cM}$ defined by\looseness=-1
\begin{align}
\label{eq:chiM}
\chi_{\cM} := \{\varphi : \skinnyS \to \R \colon \varphi \text{ homomorphism,}  \ \varphi(\cM) \subseteq \R_{\geq 0}, \ \varphi(1) = 1 \}.
\end{align}

The next proposition is the well-known Kadison-Dubois representation theorem, see, e.g., \cite[Theorem 5.4.4]{marshallbook}.

\begin{proposition}
\label{prop:KD}
Let $\cM \subseteq \skinnyS$ be an archimedean quadratic module.
Then, for all $a \in \skinnyS$, one has
$$\forall \varphi \in \chi_{\cM}  \quad \varphi(a) \geq 0 \qquad \Leftrightarrow \qquad \forall \varepsilon > 0 \quad a + \varepsilon \in \cM.$$
\end{proposition}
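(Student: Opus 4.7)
The $(\Leftarrow)$ direction is immediate: for $\varphi\in\chi_\cM$ and $\varepsilon>0$, the hypothesis gives $\varphi(a)+\varepsilon=\varphi(a+\varepsilon)\ge 0$, hence $\varphi(a)\ge -\varepsilon$ for every $\varepsilon>0$, so $\varphi(a)\ge 0$.

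The nontrivial direction $(\Rightarrow)$ is the classical Kadison--Dubois representation theorem (see \cite[Theorem~5.4.4]{marshallbook}); the plan is to argue by contrapositive. Suppose $a+\varepsilon_0\notin\cM$ for some $\varepsilon_0>0$; the goal is to produce $\varphi\in\chi_\cM$ with $\varphi(a)\le-\varepsilon_0<0$. Apply Zorn's lemma to the family of quadratic modules containing $\cM$ but not containing $a+\varepsilon_0$, ordered by inclusion, to obtain a maximal such module $\cN$. Since $\cM\subseteq\cN$, the module $\cN$ remains archimedean.

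The technical heart of the argument is to show that $\cN$ is in fact an \emph{ordering} of $\skinnyS$, meaning $\cN\cup(-\cN)=\skinnyS$ and $I:=\cN\cap(-\cN)$ is a prime ideal. If some $f\in\skinnyS$ satisfied $f\notin\cN$ and $-f\notin\cN$, then by maximality the strictly larger quadratic modules $\cN+f\sum\skinnyS^2$ and $\cN-f\sum\skinnyS^2$ would each contain $a+\varepsilon_0$, yielding identities
$$a+\varepsilon_0=\sigma+f\tau=\sigma'-f\tau',\qquad \sigma,\sigma'\in\cN,\ \tau,\tau'\in\textstyle\sum\skinnyS^2.$$
Multiplying the first by $\tau'$, the second by $\tau$, and adding (using $\sum\skinnyS^2\cdot\cN\subseteq\cN$) gives $(\tau+\tau')(a+\varepsilon_0)\in\cN$. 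The archimedean hypothesis then supplies $m>0$ with $m-(\tau+\tau')\in\cM\subseteq\cN$, and the standard cancellation step of the Kadison--Dubois argument converts $(\tau+\tau')(a+\varepsilon_0)\in\cN$ into $a+\varepsilon_0\in\cN$, contradicting the choice of $\cN$. A parallel maximality argument shows that $I$ is prime.

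Once $\cN$ is known to be an ordering, the quotient $R:=\skinnyS/I$ is an archimedean ordered integral domain; its fraction field admits a unique order-preserving embedding into $\R$. The composition $\varphi:\skinnyS\to R\hookrightarrow\R$ is a ring homomorphism sending $\cN\supseteq\cM$ into $\R_{\ge 0}$, so $\varphi\in\chi_\cM$. Since $a+\varepsilon_0\notin\cN$ and $\cN$ is an ordering, $-(a+\varepsilon_0)\in\cN$, whence $\varphi(a)\le-\varepsilon_0<0$, completing the contrapositive. The main obstacle is the cancellation step above -- passing from $(\tau+\tau')(a+\varepsilon_0)\in\cN$ to $a+\varepsilon_0\in\cN$ -- which is precisely where the archimedean hypothesis is used in an essential way via rescaling against the bound $m$; the remaining ingredients (Zorn, extraction of a prime ideal from a total quadratic module, and the embedding of archimedean ordered fields into $\R$) are routine.
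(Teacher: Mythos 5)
The paper does not actually prove Proposition \ref{prop:KD}: it is quoted as the classical Kadison--Dubois/Jacobi representation theorem with a pointer to \cite[Theorem~5.4.4]{marshallbook}, so there is no in-paper argument to compare against. Your $(\Leftarrow)$ direction is fine, and your $(\Rightarrow)$ outline (Zorn, a maximal quadratic module $\cN\supseteq\cM$ with $a+\varepsilon_0\notin\cN$, pass to a quotient, embed into $\R$) is the right general shape. But the step you label ``the standard cancellation step'' is precisely the mathematical content of the theorem, and as you state it, it is not a valid implication. From $(\tau+\tau')(a+\varepsilon_0)\in\cN$ with $\tau+\tau'\in\sum\skinnyS^2$, together with archimedean bounds $m-(\tau+\tau')\in\cN$ and $k\pm(a+\varepsilon_0)\in\cN$, one cannot in general conclude $a+\varepsilon_0\in\cN$: a sum of squares $\sigma$ may vanish on part of the positivity locus, and then $\sigma b\in\cN$ carries no information about $b$ there; no rescaling against the bound $m$ repairs this. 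The exchange argument you are adapting is the one that works for \emph{preorderings}, where multiplicative closure lets one combine the two identities $a+\varepsilon_0=\sigma+f\tau=\sigma'-f\tau'$ into a contradiction. For quadratic modules that mechanism is unavailable, which is exactly why Jacobi's theorem is harder and why the archimedean hypothesis enters in a more delicate way: the standard proof works with a maximal \emph{proper} archimedean quadratic module $Q$, defines $\varphi(f)=\inf\{r\in\mathbb{Q}\colon r-f\in Q\}$ directly, and spends its effort proving that $\varphi$ is multiplicative; the statement $Q\cup(-Q)=\skinnyS$ is a \emph{consequence} of having $\varphi$ in hand, not a stepping stone towards it.

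A second, related gap is the passage ``once $\cN$ is known to be an ordering.'' Even granting $\cN\cup(-\cN)=\skinnyS$ and that $I=\cN\cap(-\cN)$ is a prime ideal, a quadratic module with these properties is a priori only a \emph{semiordering}: it need not be closed under multiplication, so $\skinnyS/I$ is not automatically an ordered domain and H\"older's embedding of archimedean ordered fields into $\R$ does not directly apply. Closing this gap requires either Prestel's theorem that archimedean semiorderings of a field are orderings, or again the direct verification that $\varphi$ above is a ring homomorphism. Since the paper's intent is simply to invoke the known theorem, the appropriate ``proof'' here is the citation; if you want to write the argument out, you need to supply the cancellation lemma and the multiplicativity of $\varphi$ in full rather than asserting them.
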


Since the algebra $\skinnyS$ is not finitely generated, it is in general not straightforward to determine if a quadratic module in $\skinnyS$ is archimedean.
Nevertheless, the next lemma shows that quadratic modules arising from algebraically bounded sets are archimedean.
To $C\subseteq\fatS$ we assign
$$C^\sig:=\{\sig (pcp^\star)\colon p\in\RX, c\in \{1\}\cup C \}
\subseteq\skinnyS.$$

\begin{lemma}\label{l:arch}
If $C$ is balanced and algebraically bounded then the quadratic module $\QM(C^\sig)\subseteq \skinnyS$ is archimedean.
\end{lemma}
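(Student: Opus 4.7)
The plan is to transfer the archimedean property from the free $\star$-algebra $\RX$, where it is classical, to the commutative algebra $\skinnyS$ via the $\skinnyS$-linear state map $\sig\colon\fatS\to\skinnyS$. Since $C$ is algebraically bounded, $N-\sum_j x_j^2\in\QM_{\RX}(C\cap\RX)$, and the standard noncommutative Positivstellensatz bootstrap (cf.\ \cite{burgdorf16}) upgrades this to archimedeanity of $\QM_{\RX}(C\cap\RX)$ in $\RX$: for every $p\in\RX$ there is $M_p>0$ with $M_p-pp^\star\in\QM_{\RX}(C\cap\RX)$. Applying $\sig$---which by its $\skinnyS$-linearity sends each $\sum_i r_ic_ir_i^\star$ (with $r_i\in\RX$, $c_i\in\{1\}\cup(C\cap\RX)$) to $\sum_i\sig(r_ic_ir_i^\star)\in C^\sig$---yields $M_p-\sig(pp^\star)\in\QM(C^\sig)$ for every $p\in\RX$. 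Combining this with the identities $\sig((1\pm w)(1\pm w)^\star)=1+\sig(ww^\star)\pm 2\sig(w)\in C^\sig$ produces bounds $M_w\pm\sig(w)\in\QM(C^\sig)$ for every word $w\in\mx$.

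To promote these bounds from the generators $\sig(w)$ to arbitrary elements of $\skinnyS$, I work with the set
\[
\tilde B:=\bigl\{a\in\skinnyS:\exists\,N>0,\ N-a^2\in\QM(C^\sig)\bigr\}.
\]
The commutative identities $(a+b)^2+(a-b)^2=2(a^2+b^2)$ and $NM-a^2b^2=N(M-b^2)+b^2(N-a^2)$ show that $\tilde B$ is closed under addition and products: the former rewrites as $2(N+M)-(a+b)^2=2(N-a^2)+2(M-b^2)+(a-b)^2\in\QM(C^\sig)$, and the latter is a sum of squares in $\skinnyS$ multiplied by elements of $\QM(C^\sig)$. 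Hence $\tilde B$ is a subring of $\skinnyS$ containing $\R$. Moreover, the identity $2\sqrt{N}(\sqrt{N}\pm a)=(\sqrt{N}\pm a)^2+(N-a^2)$, together with $\tfrac{1}{2\sqrt{N}}$ being a square in $\R\subseteq\skinnyS$, shows that $N-a^2\in\QM(C^\sig)$ forces $\sqrt{N}\pm a\in\QM(C^\sig)$. Consequently, once we verify $\sig(w)\in\tilde B$ for every $w\in\mx$, the subring property forces $\tilde B=\skinnyS$ and gives the archimedean property of $\QM(C^\sig)$.

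The remaining task, and the main obstacle, is to produce for each $w$ an $N$ with $N-\sig(w)^2\in\QM(C^\sig)$. Since $N-\sig(ww^\star)\in\QM(C^\sig)$ from the first paragraph, this reduces to showing that the ``variance''
\[
\sig(ww^\star)-\sig(w)^2=\sig\bigl((w-\sig(w))(w-\sig(w))^\star\bigr)
\]
belongs to $\QM(C^\sig)$. This quantity is the $\sig$-image of a hermitian square in $\fatS$ (not in $\RX$, since $w-\sig(w)\notin\RX$), so it is pointwise nonnegative by Cauchy--Schwarz, yet it has no immediate decomposition of the form $\sum_i\sigma_ic_i$ with $\sigma_i\in\sum\skinnyS^2$ and $c_i\in\{1\}\cup C^\sig$ needed for membership in $\QM(C^\sig)$. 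A natural attack is to exploit the one-parameter family $T(\alpha):=\sig((w-\alpha)(w-\alpha)^\star)=\alpha^2-2\alpha\sig(w)+\sig(ww^\star)\in C^\sig$ for $\alpha\in\R$, whose completed-square form $T(\alpha)=(\alpha-\sig(w))^2+\bigl(\sig(ww^\star)-\sig(w)^2\bigr)$ expresses the variance in terms of elements of $C^\sig$ and squares in $\skinnyS$; an appropriate $\R$-linear combination of $T(\alpha)$'s and squares, leveraging the already-established bounds from paragraph one, should yield the required certificate and close the proof.
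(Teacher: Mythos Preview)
Your argument is incomplete: the final paragraph does not establish that $N-\sig(w)^2\in\QM(C^\sig)$. The sentence ending with ``should yield the required certificate and close the proof'' is a hope, not a proof. Concretely, your identity $T(\alpha)=(\alpha-\sig(w))^2+\big(\sig(ww^\star)-\sig(w)^2\big)$ only expresses the variance as $T(\alpha)$ \emph{minus} a square, and subtracting squares does not preserve membership in a quadratic module; no $\R$-linear combination of the $T(\alpha)$ can ever produce the term $-\sig(w)^2$, so the route you sketch does not close. Moreover, the variance $\sig(ww^\star)-\sig(w)^2=\sig\big((w-\sig(w))(w-\sig(w))^\star\big)$ is the $\sig$-image of a hermitian square from $\fatS$ rather than $\RX$, and (as you note) there is no evident reason it lies in $\QM(C^\sig)$, whose generators $C^\sig$ only involve $\sig(pcp^\star)$ with $p\in\RX$.

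The detour through the variance is unnecessary. You already have $M\pm\sig(w)\in\QM(C^\sig)$ from your first paragraph, and this is enough: the commutative identity
\[
\Big(\tfrac{M+\sig(w)}{2M}\Big)^2\big(M-\sig(w)\big)\;+\;\Big(\tfrac{M-\sig(w)}{2M}\Big)^2\big(M+\sig(w)\big)\;=\;\frac{M^2-\sig(w)^2}{2M}
\]
exhibits $M^2-\sig(w)^2$ as a sum of squares in $\skinnyS$ times elements of $\QM(C^\sig)$, hence $\sig(w)\in\tilde B$ and your subring argument goes through. (Equivalently, this identity shows $H_\cM:=\{a:\exists N,\ N\pm a\in\QM(C^\sig)\}$ coincides with your $\tilde B$, so once you have $\sig(w)\in H_\cM$ you are done.) The paper's proof is even shorter: it obtains $m\pm\sig(w)\in\QM(C^\sig)$ by a direct induction on $|w|$ using $N\sig(ww^\star)-\sig(wx_j^2w^\star)=\sig\big(w(N-x_j^2)w^\star\big)\in\QM(C^\sig)$, and then simply cites the classical fact (Vidav--Cimpri\v{c}) that the bounded elements of a quadratic module in a commutative ring form a subring, rather than reproving it via $\tilde B$.
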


\begin{proof}
It suffices to show that the generators of $\skinnyS$ are bounded with respect to $\QM(C^\sig)$ \cite{cimp}, i.e., that for every $w\in\mx$ there exists $m>0$ such that $m\pm\sig(w)\in \QM(C^\sig)$.
Since
$$N\sig(ww^\star)-\sig(wx_j^2w^\star)
=\sig\left(w(N-x_j^2)w^\star \right)\in \QM(C^\sig),
$$
induction on $|w|$ implies that for every $w$ there is $m'>0$ such that $m'-\sig(ww^\star)\in \QM(C^\sig)$.
Then
\begin{equation*}
\tfrac14+m'\pm\sig(w) = 
\sig\left((\tfrac12\pm w)(\tfrac12\pm w)^\star\right)
+m'-\sig(ww^\star) \in \QM(C^\sig).\qedhere
\end{equation*}
\end{proof}

We are now ready to prove an analog of the noncommutative Helton-McCullough Positivstellensatz \cite{Helton04} for \state polynomials subject to \ncstate constraints.

\begin{theorem}\label{thm:nocyc}
Let $C\subseteq \fatS$ be balanced and algebraically bounded.
Then for $a\in\skinnyS$ the following are equivalent:
\begin{enumerate}[\rm (i)]
\item\label{it:i3} $a(\lambda;\uX)\geq 0$ for all 
$(\lambda;\uX) \in \vD_C^\infty$;\vspace{0.25ex}
\item\label{it:i2} $a(\lambda;\uX)\geq 0$ for all 
$(\lambda;\uX) \in \cD_C^\infty$;\vspace{0.25ex}
\item\label{it:i1} $a+\varepsilon \in \QM(C^\sig)$ for all $\varepsilon>0$.
\end{enumerate}
\end{theorem}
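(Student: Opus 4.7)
The plan is to establish (iii)$\Rightarrow$(ii)$\Rightarrow$(i)$\Rightarrow$(iii). For (iii)$\Rightarrow$(ii), I first observe that $\QM(C^\sig)$ is non-negative on $\cD_C^\infty$: each generator $\sig(pcp^\star)$ evaluates to $\lambda(p(\uX)c(\lambda;\uX)p(\uX)^\star)$, which, factoring $c(\lambda;\uX)=S^\star S$ (possible as $c(\lambda;\uX)\succeq 0$), equals $\lambda(Y^\star Y)\geq 0$ with $Y=Sp(\uX)^\star$. Thus $a+\varepsilon\geq 0$ on $\cD_C^\infty$ for every $\varepsilon>0$, so $a\geq 0$ there. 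The implication (ii)$\Rightarrow$(i) is immediate from $\vD_C^\infty\subseteq\cD_C^\infty$.

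For the main implication (i)$\Rightarrow$(iii), I invoke Kadison--Dubois (Proposition \ref{prop:KD}): Lemma \ref{l:arch} ensures that $\QM(C^\sig)\subseteq\skinnyS$ is archimedean, so it suffices to show $\varphi(a)\geq 0$ for every character $\varphi\in\chi_{\QM(C^\sig)}$. Given such a $\varphi$, the idea is to convert it into a concrete pair $(\lambda;\uX)\in\vD_C^\infty$ realizing $\varphi(a)=a(\lambda;\uX)$, after which (i) finishes the argument. Define $L\colon\RX\to\R$ by $L(1):=1$ and $L(p):=\varphi(\sig(p))$ for nonconstant $p$. Then $L(pp^\star)=\varphi(\sig(pp^\star))\geq 0$ since $\sig(pp^\star)\in C^\sig$. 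Writing the algebraic boundedness witness as $N-x_1^2-\cdots-x_n^2=\sum_ip_ic_ip_i^\star$ with $c_i\in\{1\}\cup C\cap\RX$, I compute $L(p(N-\sum_jx_j^2)p^\star)=\varphi(\sum_i\sig((pp_i)c_i(pp_i)^\star))\geq 0$. Proposition \ref{prop:moment} then yields a separable Hilbert space $\cH$, a vector state $\lambda$ on $\cB(\cH)$, and self-adjoint $\uX\in\cB(\cH)^n$ with $L(p)=\lambda(p(\uX))$ for all $p\in\RX$.

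The crux is verifying $(\lambda;\uX)\in\vD_C^\infty$. Unwinding the GNS construction behind Proposition \ref{prop:moment}, $\cH$ is the completion of $\RX$ modulo the null space of the semidefinite form $(p,q)\mapsto L(q^\star p)$, with $\uX$ acting by left multiplication and the unit vector $[1]$ implementing $\lambda$. For any $c\in C$ and $p\in\RX$, a short computation exploiting $\skinnyS$-linearity of $\sig$ yields
\[
\langle c(\lambda;\uX)[p],[p]\rangle=\varphi(\sig(p^\star cp))=\varphi(\sig(qcq^\star))\geq 0,\qquad q:=p^\star,
\]
since $\sig(qcq^\star)\in C^\sig$. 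By density of the $[p]$'s, $c(\lambda;\uX)$ has non-negative quadratic form, i.e., $c(\lambda;\uX)\succeq 0$; for non-symmetric $c\in C$, the balanced hypothesis supplies $-c\in C$, whence both quadratic forms vanish and the symmetric part of $c(\lambda;\uX)$ is $0$, in accordance with the interpretation of $\succeq 0$ recorded in Section \ref{sec:semialg}. Hence $(\lambda;\uX)\in\vD_C^\infty$, and (i) gives $a(\lambda;\uX)\geq 0$. Finally, since $a\in\skinnyS$ is a polynomial in the symbols $\sig(w)$ and each evaluates as $\sig(w)(\lambda;\uX)=L(w)=\varphi(\sig(w))$, the homomorphism property of $\varphi$ delivers $\varphi(a)=a(\lambda;\uX)\geq 0$, closing the Kadison--Dubois step.

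The main obstacle is precisely this last verification: a constraint $c\in C$ is an \ncstate polynomial, so $c(\lambda;\uX)\succeq 0$ is not an identity inside $\fatS$ but a condition entangling $\lambda$ with itself through the formal state symbols appearing in $c$. The decisive simplification is the identity $\langle c(\lambda;\uX)[p],[p]\rangle=\varphi(\sig(p^\star cp))$, which linearises the constraint into an element of $C^\sig$, where positivity of $\varphi$ is built-in.
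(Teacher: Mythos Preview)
Your proof is correct and follows essentially the same route as the paper's: both reduce (i)$\Rightarrow$(iii) to Kadison--Dubois (Proposition~\ref{prop:KD}) via the archimedeanity of $\QM(C^\sig)$ (Lemma~\ref{l:arch}), convert a character $\varphi$ into a GNS pair $(\lambda;\uX)$ through Proposition~\ref{prop:moment}, and certify $(\lambda;\uX)\in\vD_C^\infty$ using the key identity $\langle c(\lambda;\uX)p(\uX)v,p(\uX)v\rangle=\varphi(\sig(p^\star cp))\in\varphi(C^\sig)\subseteq\R_{\ge0}$. The only cosmetic differences are that you argue the direct implication (show $\varphi(a)\ge0$ for every $\varphi$) whereas the paper argues by contraposition, and you obtain density of the vectors $p(\uX)v$ by unwinding the GNS construction behind Proposition~\ref{prop:moment}, while the paper instead compresses the operators $X_j$ to the cyclic subspace $\overline{\{p(\uX)v\colon p\in\RX\}}$ a posteriori; both achieve the same effect.
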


\begin{proof}
\ref{it:i1}$\Rightarrow$\ref{it:i2} If $(\lambda;\uX) \in \mathcal{D}_C^\infty$, then
$$c(\lambda;\uX)\succeq0,\qquad \lambda \big(p(\uX)p(\uX)^*\big)\succeq0$$
for all $c\in C$ and $p\in\RX$, hence $s(\lambda;\uX)\ge0$ for all $s\in\QM(C^\sig)$,
and so $a(\lambda;\uX)\ge0$.

\ref{it:i2}$\Rightarrow$\ref{it:i3} Clear.

\ref{it:i3}$\Rightarrow$\ref{it:i1} Suppose $a+\varepsilon \notin \QM(C^\sig)$ for some $\varepsilon>0$. By Proposition \ref{prop:KD} there exists a unital homomorphism $\varphi: \skinnyS\to\R$ with $\varphi(\QM(C^\sig))\subseteq \R_{\ge0}$ and $\varphi(a)<0$. Hence
$$\varphi(\sig (pp^\star))\ge0,\qquad \varphi(\sig (p(N-x_1^2-\cdots-x_n^2)p^\star))\ge0$$
for all $p\in\RX$. 
Consider the unital $\star$-functional $L:\RX\to\R$ given by $L(p)=\varphi(\sig(p))$.
By Proposition \ref{prop:moment}, there exist a vector state $\lambda\in\cS(\cH)$ and $\uX=\uX^*\in\cB(\cH)^n$ such that 
$L(p)=\lambda(p(\uX))$ for all $p\in\RX$.
Therefore $\varphi(b)=b(\lambda;\uX)$ for all $b\in \skinnyS$.
Let $v\in\cH$ be a unit vector such that $\lambda(Y)=\langle Yv,v\rangle$ for all $Y\in\cB(\cH)$, and 
$P\in\cB(\cH)$ be the orthogonal projection onto $\overline{\{p(\uX)v\colon p\in\RX \}}$. 
Then $\langle q(\uX)v,v\rangle=\langle q(P\uX P)v,v\rangle$ for all $q \in \RX$.
Thus we can without loss of generality assume $PX_j=X_j$; that is, the operators $X_j$ can be replaced by their compressions $PX_jP$.
Hence
$$\left\langle c(\lambda;\uX)p(\uX)v,p(\uX)v\right\rangle
=\varphi(\sig(p^\star cp))\ge0 
\qquad \text{for } c\in C,\, p\in\RX$$
together with $(I-P)X_j=0$ implies $c(\lambda;\uX)\succeq0$, and so $(\lambda,\uX) \in \vD_C^\infty$. Finally, $a(\lambda;\uX)=\varphi(a)<0$.
\end{proof}

Given $a \in \skinnyS$ and $c,p \in \fatS$ we have $a^2 \sig(pcp^\star) = \sig ((a p)c(a p)^\star)$. Therefore
\begin{equation}\label{eq:niceMC}
\QM(C^\sig) \subseteq \left\{
\sum_{i=1}^K \sig(f_ic_if_i^\star)\colon K\in\N,\ f_i\in\fatS,\  c_i\in \{1\}\cup C\right\},
\end{equation}
an inclusion we shall make use of in Sections \ref{ssec:stateMod} and \ref{sec:constrained} below.

\subsection{Quadratic \state modules}\label{ssec:stateMod}

In this section we provide an alternative form of the Positivstellensatz for \state polynomials that underlines their noncommutative roots.
A subset $\cM \subseteq \fatS$ is called a {\em quadratic \state module} if
$$1 \in \cM,\ \cM+\cM\subseteq \cM,\  
f \cM f^\star \subseteq \cM\ \forall f\in\fatS,\ \sig(\cM)\subseteq\cM.$$
Given $C\subseteq\fatS$ let $\cQ(C)$ be the quadratic \state module generated by $C$, i.e., the smallest quadratic \state module in $\fatS$ containing $C$.
We start with an alternative description of quadratic \state modules.

\begin{lemma}
\label{lemma:cycgen}
Let $C\subseteq\fatS$.
\begin{enumerate}[\rm (1)]
	\item Elements of $ \cQ(\emptyset)$ are precisely sums of
	$$\sig(h_1h_1^\star)\cdots \sig(h_\ell h_\ell^\star)h_0h_0^\star$$
	for $h_i\in\fatS$.
	\item Elements of $\cQ(C)$ are precisely sums of
	$$q_1,\quad h_1c_1h_1^\star,\quad \sig(h_2c_2h_2^\star)q_2$$
	for $h_i\in \fatS$, $q_i\in \cQ(\emptyset)$, $c_i\in C$.
	\item Elements of $\sig(\cQ(C))=\cQ(C)\cap\skinnyS$ are precisely sums of
	$$\sig(h_1h_1^\star)\cdots \sig(h_\ell h_\ell^\star)\sig(h_0 c h_0^\star )$$
	for $h_i\in\fatS$ and $c\in C\cup\{1\}$.
	\\ In particular, $\sig(\cQ(\emptyset))$ is the preordering $\Omega$ from Section \ref{sec:H17}. 
\end{enumerate}
\end{lemma}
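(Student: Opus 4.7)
My plan is to prove each part by showing both containments: the right-hand set is contained in $\cQ(C)$ because each generator lies there by construction, and conversely, the right-hand set is itself a quadratic state module (containing $C$ in case (2)), hence contains $\cQ(C)$. The key structural fact I will use throughout is that $\skinnyS$ is central in $\fatS$ (since $\fatS=\skinnyS\otimes\RX$), together with the $\skinnyS$-linearity of $\sig$ and the identity $\sig(a)=a$ for $a\in\skinnyS$.

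For part (1), let $P_0$ denote the set of sums of elements of the claimed form. I first verify each generator $\sig(h_1h_1^\star)\cdots\sig(h_\ell h_\ell^\star)h_0h_0^\star$ lies in $\cQ(\emptyset)$ by induction on $\ell$: the base case uses $h_0h_0^\star=h_0\cdot 1\cdot h_0^\star\in\cQ(\emptyset)$; for the inductive step, since $\skinnyS$ is central, the element factors as $h_0\bigl(\sig(h_1h_1^\star)\cdots\sig(h_\ell h_\ell^\star)\bigr)h_0^\star$, and applying $\sig$ to the inductive element produces the next scalar factor. Conversely, $P_0$ is a quadratic state module: it contains $1$ (take $\ell=0$, $h_0=1$), is closed under addition by definition, is closed under $f(\cdot)f^\star$ by absorbing $f$ into $h_0$, and is closed under $\sig$ by $\skinnyS$-linearity which converts $h_0h_0^\star$ to the scalar $\sig(h_0h_0^\star)$ and appends it to the leading product.

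For part (2), let $P_C$ denote the set of sums of the three prescribed types. Clearly $P_C\subseteq\cQ(C)$ and $C\subseteq P_C$ (take $c=1\cdot c\cdot 1^\star$). I then check the module axioms: $1\in P_0\subseteq P_C$; closure under sums is automatic; closure under $f(\cdot)f^\star$ uses part (1) for the first summand type, absorbs $f$ into $h_1$ for the second, and uses centrality of $\sig(h_2c_2h_2^\star)\in\skinnyS$ together with part (1) for the third; closure under $\sig$ follows from $\skinnyS$-linearity, which sends a type-2 element $hch^\star$ into a type-3 element (with $q=1$) and a type-3 element into another type-3 element (with $q$ replaced by $\sig(q)\in P_0$, again by (1)).

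For part (3), the identity $\sig(\cQ(C))=\cQ(C)\cap\skinnyS$ holds formally: the inclusion $\subseteq$ is immediate; conversely, any $m\in\cQ(C)\cap\skinnyS$ satisfies $m=\sig(m)\in\sig(\cQ(C))$. To identify this set with sums of $\sig(h_1h_1^\star)\cdots\sig(h_\ell h_\ell^\star)\sig(h_0ch_0^\star)$, I apply $\sig$ term-by-term to the description from (2): a type-1 summand $q\in P_0$ yields $\sig(q)$, which by $\skinnyS$-linearity is exactly a term of the claimed form with $c=1$; a type-2 summand $hch^\star$ yields $\sig(hch^\star)$, the case $\ell=0$; and a type-3 summand yields $\sig(h_2c_2h_2^\star)\sig(q)$, which expands into the desired form using commutativity of $\skinnyS$ to place the $c$-factor last. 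Conversely, any element of the claimed form equals $\sig(\sig(h_1h_1^\star)\cdots\sig(h_\ell h_\ell^\star)\,h_0ch_0^\star)$ where the inner expression is visibly in $\cQ(C)$ (a product of a $P_0$-element and the type-2 element $h_0ch_0^\star$, which can be realized as a type-3 element via $\sig(h_0ch_0^\star)\cdot q\in P_C$ and then invoking $\sig(m)=m$ on scalars). The final assertion $\sig(\cQ(\emptyset))=\Omega$ is the specialization $C=\emptyset$, forcing $c=1$, which recovers precisely the defining generators of $\Omega$. I expect the only mildly delicate point to be keeping careful track of the scalar/operator factorization when verifying the $\sig$-closure of $P_C$, but centrality of $\skinnyS$ makes all the reshufflings straightforward.
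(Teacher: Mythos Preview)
Your proposal is correct and supplies exactly the routine verifications the paper omits with its one-word proof ``Straightforward.'' The only expository hiccup is in the converse of part (3): rather than claiming the non-scalar inner expression $\sig(h_1h_1^\star)\cdots\sig(h_\ell h_\ell^\star)\,h_0ch_0^\star$ lies in $\cQ(C)$, it is cleaner (and is in fact what your parenthetical already says) to note that the scalar itself is a type-3 element $\sig(h_0ch_0^\star)\cdot q$ with $q=\sig(h_1h_1^\star)\cdots\sig(h_\ell h_\ell^\star)\in\cQ(\emptyset)$, hence lies in $\cQ(C)\cap\skinnyS=\sig(\cQ(C))$.
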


\begin{proof}
Straightforward.
\end{proof}

Note that while $\sig(\cQ(\emptyset))$ is a preordering in $\skinnyS$, this is not necessarily the case for $\sig(\cQ(C))$ in general (namely, $\sig(\cQ(C))$ does not need to contain elements of the form $\sig(h_1 c_1 h_1^\star )\sig(h_2 c_2 h_2^\star )$ for $c_i\in C$).
A quadratic \state module $\cM$ is called {\em archimedean} if for every $f \in \SymS$ there exists $N>0$ such that $N \pm f \in \cM$ (note that even though $\cM$ might not be contained in $\SymS$, we only consider symmetric $f$).

\begin{proposition}
\label{prop:cyclicqmarch}
A quadratic \state module $\cM$ is archimedean if and only if there exists $N>0$ such that $N - x_1^2-\cdots-x_n^2 \in \cM$.
\end{proposition}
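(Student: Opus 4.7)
The forward direction is immediate: archimedeanity of $\cM$ applied to the symmetric element $x_1^2+\cdots+x_n^2\in\SymS$ yields $N>0$ with $N-\sum_j x_j^2\in\cM$. For the converse, the plan is to bootstrap the single bound $N-\sum_j x_j^2\in\cM$ into bounds on every symmetric element of $\fatS$, in four waves: first on $ww^\star$ for words, then on their symmetrizations $w+w^\star$, then on all of $\skinnyS$, and finally on arbitrary symmetric $\fatS$-words. Throughout, I would use freely that $\cM$ is closed under multiplication by nonnegative scalars (since $\lambda m=\sqrt\lambda\cdot m\cdot\sqrt\lambda^\star$ whenever $\lambda\ge0$).

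First, I would prove by induction on $|w|$ that there is some $M_w>0$ with $M_w-ww^\star\in\cM$. The inductive step hinges on the identity
\[
Nww^\star-wx_j^2w^\star \;=\; w\Bigl(N-\sum_i x_i^2\Bigr)w^\star + \sum_{i\ne j}(wx_i)(wx_i)^\star \;\in\;\cM,
\]
which combined with the induction hypothesis multiplied by $N$ gives $M_{wx_j}-(wx_j)(wx_j)^\star\in\cM$ with $M_{wx_j}=NM_w$. The squared-shift identities $(w\pm1)(w\pm1)^\star=ww^\star+1\pm(w+w^\star)\in\cM$ then immediately upgrade this to bounds $M_w+1\pm(w+w^\star)\in\cM$ for every $w\in\mx$, and in particular to bounds on every palindrome.

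For the scalar part, rather than redo the induction inside $\skinnyS$, I would invoke Lemma~\ref{l:arch} with $C'=\{N-\sum_jx_j^2\}$, which is balanced and algebraically bounded, so that $\QM(C'^\sig)\subseteq\skinnyS$ is archimedean. Each generator $\sig(pcp^\star)$ of $C'^\sig$ lies in $\sig(\cM)\subseteq\cM$, and $\cM\cap\skinnyS$ is itself a quadratic module in $\skinnyS$, so $\QM(C'^\sig)\subseteq\cM\cap\skinnyS$; this transfers the archimedean property and delivers bounds $R_s\pm s\in\cM$ for every $s\in\skinnyS$.

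The decisive and most delicate step is the fourth one: every symmetric element of $\fatS$ decomposes as an $\R$-linear combination of products $st$, where $s=\prod_j\sig(u_j)\in\skinnyS$ is central and symmetric and $t\in\SymS$ is either a palindrome word or a symmetrization $v+v^\star$; bounds on both factors are already in hand. The main obstacle is that quadratic modules are not closed under products, so one cannot naively multiply bounds. I would instead exploit centrality: since $R_s\pm s$ is symmetric and central, the defining property $f\cM f^\star\subseteq\cM$ gives $(R_s\pm s)^2(R_t\pm t)=(R_s\pm s)(R_t\pm t)(R_s\pm s)^\star\in\cM$. Summing the four resulting elements with matched signs produces
\[
R_s^2R_t+s^2R_t\pm 2R_s\,st\;\in\;\cM,
\]
whose ``excess'' first summand lies in $\skinnyS$ and is therefore bounded via the previous step; adding such a bound and rescaling by the positive scalar $1/(2R_s)$ yields a constant bound on $\pm st$, so $\cM$ is archimedean.
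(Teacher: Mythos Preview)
Your proof is correct, but it takes a more hands-on route than the paper's. The paper's argument pivots on the classical fact (cited from \cite{cimp}) that the set $H=\{h\in\fatS:\exists M>0,\ M-hh^\star\in\cM\}$ of bounded elements is a $\star$-subalgebra of $\fatS$; once one checks that each $x_j$ and each $\sig(w)$ lies in $H$ (the latter via $\sig(ww^\star)-\sig(w)^2=\sig((w-\sig(w))(w-\sig(w))^\star)\in\cM$ combined with $\sig$-stability of $\cM$), the subalgebra property immediately gives $H=\fatS$ and hence archimedeanity. Your approach replaces this single structural lemma with explicit computations: an induction for $ww^\star$, an appeal to Lemma~\ref{l:arch} for the commutative part $\skinnyS$, and then the centrality trick $(R_s\pm s)^2(R_t\pm t)\in\cM$ to manufacture bounds on mixed products $st$. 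The trade-off is that your argument is more self-contained (it only needs the commutative ``generators bounded $\Rightarrow$ archimedean'' fact hidden inside Lemma~\ref{l:arch}, rather than the noncommutative subalgebra statement), but it is longer and somewhat obscures why the result holds---the paper's use of $H$ makes transparent that archimedeanity is really a question about which elements generate $\fatS$ as an algebra.
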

\begin{proof}
The forward implication is obvious. For the converse, note that the set $\cM \cap \RX$ is an archimedean quadratic module. Thus, for all $p=p^\star\in \RX$ there exists  $M>0$ such that 
\begin{align}
\label{eq:cyclicqmarch1}
M \pm p \in \cM \cap \RX \,.
\end{align}
In addition, the set $H$ of bounded elements, defined by
\[
H = \{h \in \fatS \colon \exists M>0 \text{ s.t. } M -hh^\star \in \cM \} \,,
\]
is closed under involution, addition, subtraction and multiplication, i.e., is a $\star$-subalgebra of $\fatS$ \cite{cimp}.
A symmetric element $f\in\fatS$ is in $H$ if and only if there is some $M>0$
with $M\pm f\in \cM$.

For every $w\in\mx$ we have
\begin{equation}\label{eq:cs1}
\sig(ww^\star)-\sig(w)^2 = \sig\big((w-\sig(w))(w-\sig(w))^\star\big)\in \cM.
\end{equation}
By \eqref{eq:cyclicqmarch1} and the fact that $\cM$ is preserved under $\sig$, 
there is some $M>0$ with $M-\sig(ww^\star)\in \cM$.
Adding this to \eqref{eq:cs1} yields
$M-\sig(w)^2\in \cM$. Thus, by the definition of $H$, $\sig(w)\in \cM$.
The desired result now follows since $H$ is a subalgebra of $\fatS$.
\end{proof}

The following is the \state version of Theorem \ref{thm:nocyc}. Note that while the constraints in Corollary \ref{c:psatz} are \ncstate polynomials, the objective function needs to be a \state polynomial (cf.~\cite[Example 4.6]{KMV}).

\begin{corollary}
\label{c:psatz}
Let $\cM \subseteq \fatS$ be an archimedean quadratic \state module and $a \in \skinnyS$. The following are equivalent:
\begin{enumerate}[\rm (i)]
\item\label{it:j3} $a(\lambda;\uX)\geq 0$ for all $(\lambda,\uX) \in \vD_{\cM}^\infty$;\vspace{0.25ex}
\item\label{it:j2} $a(\lambda;\uX)\geq 0$ for all $(\lambda,\uX) \in \mathcal{D}_{\cM}^\infty$;\vspace{0.25ex}
\item\label{it:j1} $a+\varepsilon \in \cM$ for all $\varepsilon>0$.
	\end{enumerate}
\end{corollary}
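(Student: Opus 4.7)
The plan is to derive the corollary from Theorem \ref{thm:nocyc} by taking $C:=\cM$ and then upgrading the $\skinnyS$-quadratic-module certificate it supplies to one living inside $\cM$. The two easy implications come first. The implication \ref{it:j2}$\Rightarrow$\ref{it:j3} is immediate from the inclusion $\vD_\cM^\infty\subseteq\mathcal{D}_\cM^\infty$. For \ref{it:j1}$\Rightarrow$\ref{it:j2}, by the very definition of $\mathcal{D}_\cM^\infty$ every $c\in\cM$ evaluates to a positive semidefinite operator at each $(\lambda;\uX)\in\mathcal{D}_\cM^\infty$; since $a\in\skinnyS$ has scalar-valued evaluations, $a+\varepsilon\in\cM$ yields the scalar inequality $a(\lambda;\uX)+\varepsilon\ge 0$, and letting $\varepsilon\to 0$ gives \ref{it:j2}.

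The substance of the corollary is \ref{it:j3}$\Rightarrow$\ref{it:j1}, for which I need to verify the hypotheses of Theorem \ref{thm:nocyc} with $C=\cM$. The natural (and only meaningful) situation is $\cM\subseteq\SymS$: starting from $1=1^\star$, the operations $f\cdot(-)\cdot f^\star$ and $\sig(\cdot)$ preserve symmetry (the latter landing in $\skinnyS\subseteq\SymS$) and sums respect it, so any quadratic \state module generated by symmetric data lies inside $\SymS$, in which case $\cM$ is trivially balanced. Archimedeanity combined with Proposition \ref{prop:cyclicqmarch} yields $N-x_1^2-\cdots-x_n^2\in\cM\cap\RX$ for some $N>0$, and writing this element as $1\cdot(N-x_1^2-\cdots-x_n^2)\cdot 1^\star$ certifies that $\cM$ is algebraically bounded. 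Theorem \ref{thm:nocyc} then translates \ref{it:j3} into the assertion that $a+\varepsilon\in\QM(\cM^\sig)$ for every $\varepsilon>0$.

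It remains to verify $\QM(\cM^\sig)\subseteq\cM$. Directly from the axioms, $\cM^\sig\subseteq\cM$: for $p\in\RX\subseteq\fatS$ and $c\in\{1\}\cup\cM$ one has $pcp^\star\in\cM$ by $f\cM f^\star\subseteq\cM$, and hence $\sig(pcp^\star)\in\cM$ by $\sig(\cM)\subseteq\cM$. Since $\skinnyS$ is central in $\fatS=\skinnyS\otimes\RX$, for $g\in\skinnyS$ and $m\in\cM$ one has $g^2m=gmg^\star\in\cM$; combined with closure of $\cM$ under sums this shows that the quadratic module $\QM(\cM^\sig)\subseteq\skinnyS$ is contained in $\cM$. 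Thus $a+\varepsilon\in\QM(\cM^\sig)$ forces $a+\varepsilon\in\cM$, completing \ref{it:j3}$\Rightarrow$\ref{it:j1}. The main obstacle is precisely this final upgrade---from a certificate using sums-of-squares coefficients in $\skinnyS$ to one inside the noncommutative \state module $\cM$---but centrality of $\skinnyS$ inside $\fatS$ renders it immediate, with the analytic heavy lifting already performed in Theorem \ref{thm:nocyc}.
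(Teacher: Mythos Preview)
Your proof is correct and follows the same approach as the paper: apply Theorem \ref{thm:nocyc} with $C=\cM$, using Proposition \ref{prop:cyclicqmarch} for algebraic boundedness, and then verify $\QM(\cM^\sig)\subseteq\cM$ (the paper invokes \eqref{eq:niceMC} for this last containment, which is exactly your centrality argument $g^2m=gmg^\star\in\cM$ together with $\sig(\cM)\subseteq\cM$).

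One minor correction: your assertion that ``the natural (and only meaningful) situation is $\cM\subseteq\SymS$'' is not correct in general. The paper explicitly remarks, right after defining archimedeanity of a quadratic \state module, that ``$\cM$ might not be contained in $\SymS$''; indeed $\cQ(C)$ for a non-symmetric $C$ contains elements $hch^\star$ that are not symmetric (Lemma \ref{lemma:cycgen}). So your justification that $\cM$ is balanced does not go through as written. This is harmless for the argument, however: the paper's own proof does not verify balancedness either, and an inspection of the proof of Theorem \ref{thm:nocyc} shows that the balanced hypothesis plays no role in the implication \ref{it:i3}$\Rightarrow$\ref{it:i1} beyond making the symbol $\vD_C^\infty$ officially defined.
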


\begin{proof}
By Proposition \ref{prop:cyclicqmarch}, there exists $N>0$ such that $N-x_1^2-\cdots-x_n^2\in\cM$.
Furthermore, $\QM(\cM^\sig)\subseteq\cM$ by \eqref{eq:niceMC}.
Therefore \ref{it:j3}$\Rightarrow$\ref{it:j1} follows from Theorem \ref{thm:nocyc}, and \ref{it:j1}$\Rightarrow$\ref{it:j2}$\Rightarrow$\ref{it:j3} 
is clear.
\end{proof}

\subsection{Nonexistence of a Krivine-Stengle or Schm\"udgen Positivstellensatz for \state polynomials}\label{ssec:nogo}

We finish this section by commenting on two classical non-archimedean Positivstellens\"atze from real algebraic geometry, and how their straightforward (albeit possibly na\"ive) analogs for \state polynomials fail.

A quadratic \state module $\cP\subseteq\fatS$ is a \emph{\state preordering} if $\sig(\cP)\cdot\cP\subseteq\cP$.
Note that $\sig(\cP)=\cP\cap\skinnyS$ is then a preordering (in the usual sense) in $\skinnyS$.
Moreover, if $\cP$ is a \state preordering generated by $C$, then $\sig(\cP)$ is the preordering generated by $\sig(hch^\star)$ for $c\in\{1\}\cup C$.

Consider the following two statements about a finitely generated \state preordering $\cP$ and $a\in\skinnyS$, which would be analogs of the Krivine-Stengle Positivstellensatz \cite[Theorem 2.2.1]{marshallbook} and the Schm\"udgen Positivstellensatz \cite[Corollary 6.1.2]{marshallbook}, respectively:
\begin{enumerate}[(A)]
\item if $a|_{\cD_\cP^\infty}\ge0$ then there exist $p_1,p_2\in\cP$ and $k\in\N$ such that $ap_1=a^{2k}+p_2$;
\item if $\cD_\cP^\infty$ is bounded in operator norm and $a|_{\cD_\cP^\infty}\ge\varepsilon$ for some $\varepsilon>0$ then $a\in\cP$.
\end{enumerate}

\begin{example}
Let $\cP$ be the \state preordering generated by
$$\Big\{\pm\big(1+[x_1,x_2]^2\big),
\pm\big[[x_1,x_2],x_1\big],
\pm\big[[x_1,x_2],x_2\big] \Big\},$$
and $a=-\sig(x_1)$.
Then
\begin{enumerate}[\rm (1)]
    \item $\cD_\cP^\infty=\emptyset$;
    \item there is a homomorphism $\varphi:\skinnyS\to\R$ such that $\varphi(\sig(\cP))= \R_{\ge0}$ and $\varphi(a)<0$;
    \item The above implications (A) and (B) both fail.
\end{enumerate}
\end{example}

\begin{proof}
(1) Let $\cH$ be a complex Hilbert space, and suppose there exist $X_1,X_2\in\cB(\cH)$ such that
$[X_1,X_2]$ commutes with $X_1,X_2$ and
\begin{equation}\label{e:preWeyl}
I+[X_1,X_2]^2=0.
\end{equation}
By the GNS construction (applied with any state on $\cB(\cH)$) one can then without loss of generality assume that there exists $v\in\cH$ such that $\{p(\uX)v\colon p\in\RX\}$ is dense in $\cH$. Then $[X_1,X_2]$ is central in $\cB(\cH)$, and so \eqref{e:preWeyl} implies $[X_1,X_2]=\pm iI$. But this contradicts nonexistence of bounded representations of the Weyl algebra \cite[Example VIII.5.2]{reed80}. Therefore $\cD_\cP^\infty=\emptyset$.

(2) The standard (Bargmann-Fock) unbounded $\star$-representation of the Weyl algebra is given by the unbounded operator $T$ acting on the complex Hilbert space $\ell^2$ with the orthonormal basis $\{e_n\}_{n\in\N}$ as $T e_n=\sqrt{n}e_{n+1}$. The domain of this representation contains $\bigoplus_{n\in\N}\C e_n$, and $T$ satisfies $T^*T-TT^*=I$.
Let $v=\frac{1}{\sqrt{2}}(e_1+e_2)$ and $X_1=\frac12 (T+T^*)$, $X_2=\frac{1}{2i} (T-T^*)$. Then $X_1,X_2$ are self-adjoint unbounded operators, and $[X_1,X_2]=iI$. Define a homomorphism $\varphi:\fatS\to\R$ by
$$\varphi (\sig(w)) = \operatorname{Re}\,\langle w(X_1,X_2)v,v\rangle$$
for $w\in\mx$. By the construction, $\varphi(\sig(\cP))=\R_{\ge0}$ and $\varphi(a)=-\frac12$.

(3) Note that $\cD_\cP^\infty$ is bounded and $a|_{\cD_\cP^\infty}\ge 1$ vacuously by (1). Then (B) fails since $a\notin\sig(\cP)=\cP\cap\skinnyS$ by (2).
If $ap_1=a^{2k}+p_2$ for $p_1,p_2\in\cP$, then 
$$ 0\ge\varphi(a)\varphi(\sig(p_1))=\varphi(a)^{2k}+\varphi(\sig(p_2))>0,$$
a contradiction; thus  also (A) fails.
\end{proof}

\section{Optimization of \state polynomials}
\label{sec:hierarchy}

In this section we apply Theorem \ref{thm:nocyc} to optimization of \state objective functions subject to \ncstate constraints. 
Doing so, we obtain a converging hierarchy of SDP relaxations in Section \ref{sec:constrained}, and its dual in Section \ref{sec:constrained_dual}. 
When flatness occurs in the latter hierarchy, one can extract a finite-dimensional minimizer as shown in Section \ref{sec:gns}. 
Finally, Section \ref{ssec:sparse} presents sparsity-based
approaches to reducing the sizes of the constructed SDP hierarchies.

Recall that the degree of a $\fatS$-word $\prod_j\sig(u_j) v$ equals $|v|+\sum_j|u_j|$, and the degree of $f \in \fatS$ is the maximal degree of $\fatS$-words in the expansion of $f$.
Let $\fatS\!_d$ be subspace of \ncstate polynomials of degree at most $d$, and denote $\bsigma(n,d)=\dim \fatS\!_d$. Note that the number of $\fatS$-words of degree $d\in\N$ is bounded below by $n^d$ (the number of words from $\mx$ of length $d$) and above by $\frac12 (2n)^d$ (the number of ordered lists of words from $\mx\setminus\{1\}$ whose lengths add to $d$). This gives a coarse estimate $n^d\le \bsigma(n,d)\le (2n)^{d+1}$. Also, let $\skinnyS_d=\fatS\!_d\cap\skinnyS$.
By $\W^{\fatS}_d$ we denote the vector of all $\fatS$-words of degree at most $d$ with respect to the degree lexicographic order; note that $\W^{\fatS}_d$ is of length $\bsigma (n,d)$.
Given $c \in \fatS$ denote $d_c := \lceil \frac{\deg c}{2} \rceil$.

Throughout the rest of the paper we restrict ourselves to constraint sets $C\subset \fatS$ such that $C\cap \fatS_d$ is finite for all $d\in\N$. 
In polynomial optimization one typically focuses on finite sets of constraints, but this slightly more general setup is needed later in Section \ref{sec:networks}.
\subsection{SDP hierarchy for \state polynomial optimization}
\label{sec:constrained}

For a balanced $C\subseteq\fatS$ and $d\in\N$ define
\begin{equation}\label{eq:truncated}
\cM(C)_d:=\left\{
\sum_{i=1}^{K} \sig(f_ic_if_i^\star)\colon K\in\N,\ f_i\in\fatS,\ c_i \in \{1\}\cup C,\ \deg(f_ic_if_i^\star)\le 2d
\right\}.
\end{equation}
By \eqref{eq:truncated} it is clear that checking membership in $\cM(C)_d$ can be formulated as an SDP. Indeed, let $C\cap\fatS_d=\{c_1,\dots,c_\ell\}$, and $a\in\skinnyS_d$. Then 
$a\in\cM(C)_d$ if and only if there exist positive semidefinite matrices $A_0,\dots,A_\ell$, where $A_0$ is of size $\Delta(n,d)$ and $A_i$ is of size $\Delta(n,d-d_{c_i})$ for $i=1,\dots,\ell$, such that
$$a=\sig\left({\W^{\fatS}_d}^\star
\cdot A_0\cdot \W^{\fatS}_d\right)
+\sum_{i=1}^\ell
\sig\left({\W^{\fatS}_{d-d_{c_i}}}^\star
\cdot(c_iA_i)\cdot
\W^{\fatS}_{d-d_{c_i}}\right).
$$
Furthermore, $\cM(C)_d$ for $d=1,2,\dots$ is an increasing sequence of convex cones whose union by \eqref{eq:niceMC} contains the quadratic module $\QM(C^\sig)$ from Section \ref{sec:noncyclic}.

Given a \state polynomial $a\in\skinnyS$, one can use $\cM(C)_d$ to design a hierarchy of SDP relaxations for minimizing $a$ over the \state semialgebraic set $\mathcal{D}_{C}^\infty$.
Let us define  $a_{\min}$ and $a_{\min}^{\infty}$ as follows:
\begin{align}
\label{eq:pure_constr} a_{\min} &:= \inf \{ a(\lambda;\uX) \colon (\lambda,\uX) \in \mathcal{D}_{C} \} \,, \\ 
\label{eq:pure_constr1} a_{\min}^{\infty} &:=
\inf \{ a(\lambda;\uX) \colon (\lambda,\uX) \in \mathcal{D}_{C}^\infty \}\,. 
\end{align}
Since $\mathcal{D}_{C}$ embeds into $\mathcal{D}_{C}^\infty$, one has $a_{\min}^\infty \leq a_{\min}$.
One can approximate $a_{\min}^\infty$ from below via the following hierarchy of SDPs, indexed by $d \geq \frac{\deg a}{2}$: 
\begin{align}
\label{eq:pure_constr_dual}
a_{\min,d} = \sup \{ m \colon a - m \in \cM(C)_d \} \,.
\end{align}

\begin{corollary}
\label{cor:pure_cvg}
If $C$ is balanced and algebraically bounded, the SDP hierarchy \eqref{eq:pure_constr_dual} provides a sequence of lower bounds $(a_{\min,d})_{d \geq \frac{\deg a}{2}}$  monotonically converging to $a_{\min}^\infty$.
\end{corollary}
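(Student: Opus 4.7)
The plan is to establish three properties in turn: the lower bound inequality $a_{\min,d}\le a_{\min}^\infty$, monotonicity of the sequence, and convergence to $a_{\min}^\infty$. The first two are essentially bookkeeping, while convergence is where Theorem \ref{thm:nocyc} does all the heavy lifting.

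For the lower bound, if $m$ is feasible at level $d$, I would expand $a-m=\sum_i \sig(f_ic_if_i^\star)$ with $c_i\in\{1\}\cup C$ and $\deg(f_ic_if_i^\star)\le 2d$. For any $(\lambda,\uX)\in\cD_C^\infty$, each $c_i(\lambda;\uX)$ is positive semidefinite by definition of $\cD_C^\infty$, so $f_i(\lambda;\uX)\,c_i(\lambda;\uX)\,f_i(\lambda;\uX)^\star$ is a positive semidefinite operator, and applying the positive unital functional $\lambda$ produces a nonnegative real number. Summing yields $a(\lambda;\uX)\ge m$, hence $m\le a_{\min}^\infty$; taking the supremum over feasible $m$ gives $a_{\min,d}\le a_{\min}^\infty$. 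Monotonicity is even shorter: since $\cM(C)_d\subseteq \cM(C)_{d+1}$, every feasible $m$ at level $d$ remains feasible at level $d+1$.

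The substantive step is convergence. First I would note $a_{\min}^\infty>-\infty$: algebraic boundedness of $C$ produces $N>0$ with $N-x_1^2-\cdots-x_n^2\in \QM(C^\sig)\cap\RX$, whence $\|X_j\|\le\sqrt{N}$ for every $(\lambda,\uX)\in\cD_C^\infty$, so every state symbol satisfies $|\sig(w)(\lambda;\uX)|=|\lambda(w(\uX))|\le N^{|w|/2}$, bounding the polynomial $a$ below. Fix $\varepsilon>0$. The state polynomial $a-a_{\min}^\infty$ is nonnegative on $\cD_C^\infty$, so Theorem \ref{thm:nocyc} (implication (ii)$\Rightarrow$(iii)) gives
\begin{equation*}
a-(a_{\min}^\infty-\varepsilon)\ =\ (a-a_{\min}^\infty)+\varepsilon\ \in\ \QM(C^\sig).
\end{equation*}
By inclusion \eqref{eq:niceMC} applied to the finitely many summands of this representation, there is a single degree $d_\varepsilon$ with $a-(a_{\min}^\infty-\varepsilon)\in \cM(C)_{d_\varepsilon}$, i.e., $m=a_{\min}^\infty-\varepsilon$ is SDP-feasible at level $d_\varepsilon$. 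Thus $a_{\min,d_\varepsilon}\ge a_{\min}^\infty-\varepsilon$, and combined with monotonicity and the lower bound this forces $\lim_d a_{\min,d}=a_{\min}^\infty$.

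I do not expect a genuine obstacle, since Theorem \ref{thm:nocyc} and the inclusion \eqref{eq:niceMC} between $\QM(C^\sig)$ and $\bigcup_d \cM(C)_d$ have already been established; the corollary is their repackaging into optimization language. The only small point to double-check is the case $\cD_C^\infty=\emptyset$, where $a_{\min}^\infty=+\infty$ by convention and the argument above still applies with arbitrary shifts $m$, yielding $a_{\min,d}\to+\infty$.
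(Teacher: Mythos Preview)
Your proposal is correct and follows essentially the same approach as the paper: lower bound from positivity of $\cM(C)_d$ on $\cD_C^\infty$, monotonicity from $\cM(C)_d\subseteq\cM(C)_{d+1}$, and convergence from Theorem~\ref{thm:nocyc} together with the inclusion~\eqref{eq:niceMC}. You are in fact slightly more careful than the paper, which applies Theorem~\ref{thm:nocyc} without explicitly invoking~\eqref{eq:niceMC} to pass from $\QM(C^\sig)$ to some $\cM(C)_d$, and does not separately address finiteness of $a_{\min}^\infty$ or the empty case.
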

\begin{proof}
By \eqref{eq:truncated} and \eqref{eq:pure_constr_dual}, it is clear that $a_{\min,d}\le a_{\min}^\infty$.
As $\cM(C)_d \subseteq \cM(C)_{d+1}$, one has $a_{\min,d} \leq a_{\min,d+1}$.
Furthermore, Theorem~\ref{thm:nocyc} implies that for each each $m \in \N$, there exists $d(m) \in \N$ such that $a - a_{\min}^\infty + \frac{1}{m} \in \cM(C)_{d(m)}$.
Thus one has
\[
a_{\min}^\infty - \frac{1}{m} \leq  a_{\min,d(m)} \,,
\]
which implies that
\[
\lim_{d \to \infty} a_{\min,d}  = a_{\min}^\infty
 \,.\qedhere\]
\end{proof}

\subsection{Duality and \state Hankel matrices}\label{sec:constrained_dual}

Next, we introduce \state Hankel and localizing matrices, which can be viewed as analogs of the noncommutative Hankel and localizing matrices (see e.g. \cite[Lemma~1.44]{burgdorf16}). 
To $c\in \fatS$ and a linear functional $L:\skinnyS_{2 d} \to \R$, we associate the following two matrices:
\begin{enumerate}[(a)]
\item the {\em Hankel matrix} $\M_d(L)$ is the symmetric matrix of size $\bsigma (n,d)$, indexed by $\fatS$-words $u,v \in \fatS\!_{d}$, with 
$(\M_d(L) )_{u,v}=L (\sig(u^\star v))$;
\item the {\em localizing matrix} $\M_{d -  d_c}(c \, L)$ is the symmetric matrix of size $\bsigma (n,d - d_c)$, indexed by $\fatS$-words  $u,v \in \fatS\!_{d - d_c}$, with $(\M_{d - d_c}(c \, L))_{u,v}= L ( \sig(u^\star c v))$.
\end{enumerate}

Note that the localizing matrix associated to $L$ and $1$ is simply the Hankel matrix associated to $L$.
\begin{definition}
\label{def:hankel_condition}
A matrix $M$ indexed by $\fatS$-words of degree  $\leq d$ satisfies the \emph{\state Hankel condition} if and only if 
\begin{align}
\label{eq:hankel_condition}
M_{u,v} = M_{w,z} \text{ whenever } \sig(u^\star v) = \sig (w^\star z) \,.
\end{align}
\end{definition}
\begin{remark}
\label{rk:Hankelbij}
Linear functionals on $\skinnyS_{2 d}$ and matrices from $\sbb{\bsigma(n,d)}$ satisfying the \state condition \eqref{eq:hankel_condition} are in bijective correspondence.
To a linear functional $L : \skinnyS_{2 d} \to \R$, one can assign the matrix $\M_d(L)$, defined by $(\M_d(L) )_{u,v} = L (\sig(u^\star v))$, satisfying the \state Hankel condition, and vice versa. 
\end{remark}
The positivity of $L$ relates to the positive semidefiniteness of its Hankel matrix $\M_d(L)$. The proof of the following lemma is straightforward and analogous to its free counterpart \cite[Lemma 1.44]{burgdorf16}.
\begin{lemma}
\label{lemma:pureHankel}
Given a linear functional $L : \skinnyS_{2 d} \to \R$, one has $L(\sig(f^\star f)) \geq 0$ for all $f \in \fatS\!_{d}$, if and only if, $\M_d (L) \succeq 0$. 
Given $c \in \fatS$, one has $L(\sig(f^\star \, c \, f)) \geq 0$ for all $f \in \fatS\!_{d - d_c }$, if and only if, $\M_{d - d_c} (c \, L) \succeq 0$.
\end{lemma}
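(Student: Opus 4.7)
The plan is to identify elements of $\fatS\!_d$ with their coordinate vectors in the basis $\W^{\fatS}_d$ of $\fatS$-words of degree at most $d$, and then recognize $L\circ\sig$ applied to a hermitian square as a quadratic form whose Gram matrix is exactly $\M_d(L)$. First I would write any $f \in \fatS\!_d$ as $f = \sum_{u \in \W^{\fatS}_d} f_u u$ with real coefficients $f_u$, collecting these into a coordinate vector $\mathbf{f}=(f_u)_u\in\R^{\bsigma(n,d)}$. Since each state symbol $\sig(w)$ lies in $\skinnyS$ and is therefore central in $\fatS$ and fixed by $\star$, for any $\fatS$-word $u = \prod_j\sig(u_j) v$ the element $u^\star = \prod_j\sig(u_j)\,v^\star$ is itself an $\fatS$-word of the same degree, and $u^\star w$ is an $\fatS$-word of degree at most $\deg u + \deg w$.

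Applying the $\skinnyS$-linear map $\sig:\fatS\to\skinnyS$ then yields
\[
\sig(f^\star f) = \sum_{u,v \in \W^{\fatS}_d} f_u f_v\,\sig(u^\star v) \in \skinnyS_{2d},
\]
and applying $L$ gives $L(\sig(f^\star f)) = \mathbf{f}^T \M_d(L)\,\mathbf{f}$. Symmetry of $\M_d(L)$ follows from $\sig(w)=\sig(w^\star)$, which by $\skinnyS$-linearity of $\sig$ extends from $\mx$ to all of $\fatS$, so that $\sig(u^\star v) = \sig(v^\star u)$. Since every $\mathbf{f}\in\R^{\bsigma(n,d)}$ arises in this way, the conditions ``$L(\sig(f^\star f)) \ge 0$ for all $f \in \fatS\!_d$'' and ``$\M_d(L) \succeq 0$'' are equivalent.

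The localizing statement follows from the same expansion with $c$ inserted: for $f \in \fatS\!_{d-d_c}$ one gets
\[
L(\sig(f^\star c f)) = \sum_{u,v \in \W^{\fatS}_{d-d_c}} f_u f_v\,L(\sig(u^\star c v)) = \mathbf{f}^T \M_{d-d_c}(cL)\,\mathbf{f},
\]
where each $\sig(u^\star c v)$ has degree at most $2(d-d_c)+\deg c \le 2d$ so that $L$ applies. When $c=c^\star$, symmetry of $\M_{d-d_c}(cL)$ again follows from $\sig(w)=\sig(w^\star)$; in general the identity $\sig(u^\star c v) = \sig(v^\star c^\star u)$ lets one replace $c$ by $\tfrac12(c+c^\star)$ without changing the value of $L\circ\sig$ on hermitian squares, reducing to the symmetric case.

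There is no real obstacle: the result is a purely formal consequence of $\sig:\fatS\to\skinnyS$ being $\skinnyS$-linear and $\star$-preserving, together with the concrete bijection between linear functionals on $\skinnyS_{2d}$ satisfying the \state Hankel condition and matrices indexed by $\W^{\fatS}_d$ (as recorded in Remark \ref{rk:Hankelbij}). The only bookkeeping required is verifying that each relevant expression indeed lies in $\skinnyS_{2d}$, which is immediate from the degree convention on $\fatS$-words.
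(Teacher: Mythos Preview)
Your proof is correct and is precisely the standard argument the paper has in mind: the paper does not spell out a proof but simply states that it is ``straightforward and analogous to its free counterpart \cite[Lemma~1.44]{burgdorf16}'', and your expansion of $L(\sig(f^\star f))$ as the quadratic form $\mathbf{f}^T\M_d(L)\mathbf{f}$ is exactly that argument. Your extra remark on reducing general $c$ to the symmetric case via $\sig(u^\star c v)=\sig(v^\star c^\star u)$ is a nice clarification that the paper glosses over.
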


We are now ready to state the dual SDP of \eqref{eq:pure_constr_dual}.
\begin{lemma}
\label{lemma:pure_dual}
The dual of \eqref{eq:pure_constr_dual} is the following SDP:
\begin{equation}
\label{eq:pure_constr_primal}
\begin{aligned}
\inf_{\substack{L : \skinnyS_{2 d} \to \R \\ L \emph{ linear}}} \quad  & L(a)  \\	
\emph{s.t.} 
\quad & (\M_d(L))_{u,v} = (\M_d(L))_{w,z}  \,, \quad \text{whenever } \sig(u^\star v) =  \sig( w^\star z) \,, \\
\quad & (\M_d(L))_{1,1} = 1 \,, \\
\quad & \M_{d - d_c}(c \, L) \succeq 0  \,,  \quad \text{for all }  c \in \{1\}\cup C \text{ with }d_c\le d \,.
\end{aligned}
\end{equation}
\end{lemma}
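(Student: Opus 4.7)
The plan is to derive \eqref{eq:pure_constr_primal} as the Lagrangian dual of \eqref{eq:pure_constr_dual} by first rewriting the primal as a standard SDP and then dualizing term by term.

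First I would parametrize $\cM(C)_d$ concretely. Enumerate $\{1\}\cup C$ intersected with $\fatS_{2d}$ as $c_0 = 1, c_1,\ldots, c_\ell$ (only finitely many apply since $C\cap\fatS_{2d}$ is finite by assumption). By \eqref{eq:truncated} and a standard Gram-matrix argument (already used in the discussion following \eqref{eq:truncated}), an element $p \in \skinnyS_{2d}$ lies in $\cM(C)_d$ if and only if there exist positive semidefinite matrices $A_i \in \sbb{\bsigma(n,d-d_{c_i})}$ such that
\[
p \;=\; \sum_{i=0}^\ell \sig\!\bigl({\W^\fatS_{d-d_{c_i}}}^{\!\star}\, c_i\, A_i\, \W^\fatS_{d-d_{c_i}}\bigr).
\]
Therefore \eqref{eq:pure_constr_dual} can be recast as the SDP of maximizing $m$ over $(m; A_0,\ldots,A_\ell)$ subject to $A_i \succeq 0$ and the linear equalities obtained by matching coefficients of $a - m$ and $\sum_i \sig({\W^\fatS_{d-d_{c_i}}}^{\!\star} c_i A_i \W^\fatS_{d-d_{c_i}})$ in a chosen basis of $\skinnyS_{2d}$.

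Next I would form the Lagrangian by introducing one dual multiplier per linear constraint, and bundle them into a single linear functional $L\colon \skinnyS_{2d}\to \R$. The Lagrangian is
\[
\mathcal{L} \;=\; m + L(a - m) - \sum_{i=0}^{\ell} \bigl\langle A_i,\, \M_{d-d_{c_i}}(c_i L)\bigr\rangle,
\]
where one uses the identification $L\bigl(\sig({\W^\fatS_{d-d_{c_i}}}^{\!\star} c_i A_i \W^\fatS_{d-d_{c_i}})\bigr) = \langle A_i, \M_{d-d_{c_i}}(c_i L)\rangle$ given by the definitions of the Hankel and localizing matrices. Maximizing over $m \in \R$ forces the normalization $L(1)=1$, which is equivalent to $(\M_d(L))_{1,1} = 1$. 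Maximizing over $A_i \succeq 0$ forces $\M_{d-d_{c_i}}(c_i L) \succeq 0$ by Lemma \ref{lemma:pureHankel}. The remaining objective is $L(a)$, and one minimizes over $L$, yielding exactly \eqref{eq:pure_constr_primal} apart from how $L$ is encoded.

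Finally I would justify the Hankel condition in \eqref{eq:pure_constr_primal}. The dual variable $L$ is a linear functional on $\skinnyS_{2d}$, so the entries $(\M_d(L))_{u,v} = L(\sig(u^\star v))$ depend only on the $\skinnyS$-element $\sig(u^\star v)$; equivalently, if we regard the matrix itself as the decision variable, we must impose \eqref{eq:hankel_condition}. This is precisely Remark~\ref{rk:Hankelbij} and establishes the bijection between admissible $L$'s and admissible matrices, completing the derivation.

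There is no real obstacle here; the only care needed is in the bookkeeping for the Gram parametrization of $\cM(C)_d$ and in recognizing that the pairing between $A_i$ and the localizing matrix $\M_{d-d_{c_i}}(c_i L)$ naturally arises from the definitions of $\sig$ and of the localizing matrix.
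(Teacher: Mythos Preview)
Your proposal is correct and follows essentially the same Lagrangian duality argument as the paper: the paper works abstractly with the dual cone $(\cM(C)_d)^{\vee}$ and the chain $\sup_m\inf_L(m+L(a-m))\le \inf_L\sup_m(\cdots)$, while you unpack the same computation via the explicit Gram-matrix parametrization of $\cM(C)_d$. The steps---optimizing over $m$ to force $L(1)=1$, optimizing over the PSD Gram blocks to force the localizing-matrix constraints $\M_{d-d_c}(c\,L)\succeq0$, and invoking Remark~\ref{rk:Hankelbij} and Lemma~\ref{lemma:pureHankel} for the Hankel condition---match the paper's proof exactly.
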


\begin{proof}
Let us denote by $(\cM(C)_d)^{\vee}$ the dual cone of $\cM(C)_d$.
By \eqref{eq:truncated}, one has
$$
(\cM(C)_d)^{\vee} =  \big\{  L : \skinnyS_{2d} \to \R \colon L  \text{ linear},  L(fcf^\star) \geq 0 \text{ for all } c \in \{1\}\cup C, \,f \in \fatS\!_{d - d_c} \big\}\,.
$$
By using a standard Lagrange duality approach, we obtain the dual of SDP \eqref{eq:pure_constr_dual}:
\begin{align}
a_{\min,d}  = &\sup_{a - m \in \cM(C)_d} m = \sup_{m} \inf_{L \in (\cM(C)_d)^{\vee} } (m + L(a -m)) \label{eq:pdfirst}\\
\leq & \inf_{L \in (\cM(C)_d)^{\vee} } \sup_{m}   \, (m + L(a -m)) \label{ineq:pdsecond}\\
= & \inf_{L \in (\cM(C)_d)^{\vee} } (L(a) +  \sup_{m} m (1 - L(1))) \label{eq:pdthird}\\
= & \inf_L \{L(a) \colon L \in (\cM(C)_d)^{\vee},\, L(1) = 1 \} \label{eq:pdlast}\,,
\end{align}
The second equality in \eqref{eq:pdfirst} comes from the fact that the inner minimization problem gives minimal value 0 if and only if $a - m \in \cM(C)_d$.
The inequality in \eqref{ineq:pdsecond}  trivially holds.
The inner maximization problem in \eqref{eq:pdthird} is bounded with maximum value 0 if and only $L(1) = 1$.
Eventually, \eqref{eq:pdlast} is equivalent to SDP~\eqref{eq:pure_constr_primal} by Remark
\ref{rk:Hankelbij} and Lemma \ref{lemma:pureHankel}.
\end{proof}

To establish strong duality for the SDP pair \eqref{eq:pure_constr_dual} and \eqref{eq:pure_constr_primal}, we require a stronger version of algebraic boundedness.

\begin{lemma}\label{l:L0}
Let $C\subset \fatS$ be balanced, and assume $N-x_1^2-\cdots-x_n^2$ for some $N>0$
is a conic combination of elements in $C\cap\RX_2$ and hermitian squares of elements in $\RX_1$.
If $L\in (\cM(C)_d)^\vee$ and $L(1)=0$ then $L=0$.
\end{lemma}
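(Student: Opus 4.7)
The plan is to show $L=0$ on $\skinnyS_{2d}$ in four reductions, combining positive semidefiniteness of the Hankel matrix with iterated use of the algebraic boundedness hypothesis. Since $L\in(\cM(C)_d)^\vee$, Lemma \ref{lemma:pureHankel} gives $\M_d(L)\succeq 0$, and because $(\M_d(L))_{1,1}=L(1)=0$, positive semidefiniteness forces the entire row (and column) indexed by $1$ to vanish; equivalently $L(\sig(v))=0$ for every $\fatS$-word $v$ of degree $\le d$, so $L=0$ on $\skinnyS_d$. Using the hypothesis $N-\sum_j x_j^2=\sum_i\lambda_ic_i+\sum_k\mu_k p_k p_k^\star$ with $\lambda_i,\mu_k\ge0$, $c_i\in C\cap\RX_2$ and $p_k\in\RX_1$, positivity of $L$ on the localizing squares $\sig(fc_if^\star)$ and $\sig((fp_k)(fp_k)^\star)$ yields
$$0\le L(\sig(fx_j(fx_j)^\star))\le N\,L(\sig(ff^\star))\qquad\text{for all }f\in\fatS\!_{d-1},$$
so iterating from $f=1$ nc-monomial by nc-monomial gives $L(\sig(ww^\star))=0$ for every $w\in\mx_d$.

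I then plan to extend this vanishing to all of $\fatS\!_d$ via a ``variance'' trick: for any $p\in\fatS\!_d$, expanding $(p-\sig(p))(p-\sig(p))^\star$ and applying $\sig$ gives
$$\sig\!\bigl((p-\sig(p))(p-\sig(p))^\star\bigr)=\sig(pp^\star)-\sig(p)^2,$$
hence $L(\sig(pp^\star))\ge L(\sig(p)^2)$, while PSD of $\M_d(L)$ at the row index $\sig(p)\cdot 1\in\fatS\!_d$ gives $L(\sig(p)^2)\ge0$. For an $\fatS$-word $p=\prod_{j=1}^m\sig(u_j)\cdot w$ with $\sum_j|u_j|+|w|\le d$, a nested induction (outer on $m$, inner on $|w|$) then shows
$$L\!\Bigl(\textstyle\prod_{j=1}^m\sig(u_j)^2\,\sig(ww^\star)\Bigr)=0;$$
the outer base $m=0$ is the previous step, the outer step at $w=1$ applies the variance inequality to $q\cdot u_{m+1}$ with $q=\prod_{j=1}^m\sig(u_j)$ (using the outer hypothesis at nc part $u_{m+1}$), and the inner step $w'\mapsto w'x_i$ uses the localizing argument above applied to $f=\prod_j\sig(u_j)\cdot w'$. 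Bilinearity combined with Cauchy-Schwarz $|L(\sig(f^\star g))|^2\le L(\sig(ff^\star))L(\sig(gg^\star))$ then upgrades this to $L(\sig(f^\star g))=0$ for all $f,g\in\fatS\!_d$.

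Finally, every $\skinnyS$-word $s=\prod_{i=1}^M\sig(a_i)$ with $\sum_i|a_i|\le 2d$ can be written as $\sig(f^\star g)$ for some $f,g\in\fatS\!_d$: at most one $|a_i|$ can exceed $d$ (otherwise $\sum_i|a_i|>2d$), so pick $i_0$ with $\max_{i\ne i_0}|a_i|\le d$, split $a_{i_0}=w^\star v$, and partition $\{1,\dots,M\}\setminus\{i_0\}$ into $A\sqcup B$ so that the $\fatS$-words $f:=\prod_{i\in A}\sig(a_i)\cdot w$ and $g:=\prod_{i\in B}\sig(a_i)\cdot v$ both have degree $\le d$---feasibility of this degree allocation follows from $\sum_i|a_i|\le 2d$ by a short case check. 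Then $\sig(f^\star g)=s$, so the previous step gives $L(s)=0$, and linearity yields $L=0$ on $\skinnyS_{2d}$. I expect the most delicate part to be the nested induction of the middle step, where the variance inequality and the localizing iteration must be threaded together so that at each invocation the relevant hypothesis is already established; the final degree bookkeeping is routine once the ``at most one large factor'' observation is made.
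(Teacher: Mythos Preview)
Your proof is correct and shares the paper's overall three-step architecture: (i) show $L(\sig(pp^\star))=0$ for every $\fatS$-word $p$ of degree $\le d$, (ii) polarize (you use Cauchy--Schwarz, the paper uses $(u\pm v^\star)(u\pm v^\star)^\star$) to kill the off-diagonal $L(\sig(f^\star g))$, and (iii) observe that such elements span $\skinnyS_{2d}$. The genuine difference is in step (i). You first handle free words $w\in\mx_d$ via the localizing inequality and then bootstrap to general $\fatS$-words through the variance identity $\sig(pp^\star)-\sig(p)^2=\sig\bigl((p-\sig(p))(p-\sig(p))^\star\bigr)$ inside a nested induction on the number of $\sig$-factors and the length of the nc tail. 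The paper instead runs a single induction on $\deg w$ for arbitrary $\fatS$-words: it shows $N^{\deg w}-ww^\star$ lies in an explicit cone $K_{\deg w}$ built from hermitian squares and conjugates of $N-x_j^2$, treating the three cases $w=x_jv$, $w=\sig(x_j)v$, $w=\sig(x_jv)$ in parallel (the last case is precisely your variance identity). Your approach is more modular and stays closer to the Hankel-matrix picture; the paper's is more compact but requires tracking membership in the auxiliary cone. Two minor remarks: your Step~1 (vanishing of the first Hankel row, hence $L|_{\skinnyS_d}=0$) is correct but not actually used afterwards---only $L(1)=0$ is needed to seed Step~2; and your Step~5 spells out the degree allocation that the paper leaves implicit when it asserts $\skinnyS_{2d}$ is spanned by $\sig(uv)$ with $u,v$ $\fatS$-words of degree $\le d$.
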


\begin{proof}
For $k\in\N$ let $K_k$ be the convex hull of
$$\Big\{
f_0f_0^\star, \sig(f_0f_0^\star),f_1(N-x_j^2)f_1^\star,\sig(f_1(N-x_j^2)f_1^\star)\colon f_i\in\fatS,\,\deg f_i\le k-i,\, 1\le j\le n
\Big\}.$$
We start by showing that for every $\fatS$-word $w$,
\begin{equation}\label{e:tedious}
N^{\deg w}-ww^\star \in K_{\deg w}.
\end{equation}
We proceed by induction on $\deg w$.
Firstly, $N-x_j^2\in K_1$, and then $N-\sig(x_j)^2=\sig(N-x_j^2)+\sig((x_j-\sig(x_j))^2) \in K_1$.
Now suppose \eqref{e:tedious} holds for $\fatS$-words of degree at most $k$. If $w$ is a $\fatS$-word of degree $k+1$, there are three (partially overlapping) possibilities: $w=x_jv$ for a $\fatS$-word $v$ of degree $k$, in which case
$$N^{\deg w}-ww^\star
=N^{\deg v}(N-x_j^2)+x_j(N^{\deg v}-vv^\star)x_j 
\in K_{k+1};$$
or $w=\sig(x_j)v$ for a $\fatS$-word $v$ of degree $k$, in which case
$$N^{\deg w}-ww^\star
=N^{\deg v}(N-\sig(x_j)^2)+\sig(x_j)^2(N^{\deg v}-vv^\star) 
\in K_{k+1};$$
or $w=\sig(x_jv)$ for a $\fatS$-word $v$ of degree $k$, in which case
$$N^{\deg w}-ww^\star
=N^{\deg w}-\sig((x_jv)(x_jv)^\star)
+\sig((x_jv-\sig(x_jv))(x_jv-\sig(x_jv))^\star)
\in K_{k+1}.$$

Now assume $L\in (\cM(C)_d)^\vee$ and $L(1)=0$. Since $\sig(K_{2d})\subseteq \cM(C)_d$ by the assumption on $C$, we have $L(\sig(ww^\star))=0$ for all $\fatS$-words $w$ of degree at most $d$ by \eqref{e:tedious}.
Then $\sig((u\pm v^\star)(u\pm v^\star)^\star)\in \cM(C)_d$ implies $L(\sig(uv))=0$ for all $\fatS$-words $u,v$ of degree at most $d$. Since $\skinnyS_{2d}$ is spanned by such $\sig(uv)$, we conclude $L=0$.
\end{proof}

\begin{proposition}
\label{p:nogap}
Let $C\subset \fatS$ be balanced, and assume $N-x_1^2-\cdots-x_n^2$ for some $N>0$
is a conic combination of elements in $C\cap\RX_2$ and hermitian squares of elements in $\RX_1$; 
e.g., $N-x_1^2-\cdots-x_n^2\in C$.
Then SDP \eqref{eq:pure_constr_dual} satisfies strong duality, i.e., there is no duality gap between SDP \eqref{eq:pure_constr_primal} and SDP \eqref{eq:pure_constr_dual}. 
\end{proposition}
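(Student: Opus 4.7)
The plan is to verify Slater's condition for the primal SDP~\eqref{eq:pure_constr_dual}, from which the absence of a duality gap with its dual~\eqref{eq:pure_constr_primal} will follow by standard SDP duality theory. Concretely, I will place $1$ in the topological interior of the truncated cone $\cM(C)_d$ inside the finite-dimensional real vector space $\skinnyS_{2d}$, and then use the cone structure to push $a-m$ into this interior for sufficiently negative $m$.

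First, I would check that $\cM(C)_d$ spans $\skinnyS_{2d}$, so that it has non-empty topological interior. Every $\skinnyS$-word $\prod_j\sig(w_j)$ of total degree at most $2d$ can be written as $\sig(uv)$ for some $u,v\in\fatS$ with $\deg u,\deg v\le d$: one distributes the factors $\sig(w_j)\in\skinnyS$ (which is central in $\fatS$) between $u$ and $v$, possibly splitting the word of one $\sig(w_j)$ into two subwords to balance the degrees, using that $\sig$ is $\skinnyS$-linear. The polarization identity
\[
2\sig(uv)=\sig\bigl((u+v^\star)(u+v^\star)^\star\bigr)-\sig(uu^\star)-\sig(v^\star v),
\]
whose right-hand summands all have degree at most $2d$, then realises every such $\skinnyS$-word as an element of $\operatorname{span}\cM(C)_d$.

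Next, I would invoke Lemma~\ref{l:L0} to deduce $1\in\operatorname{int}\cM(C)_d$ by contradiction. If $1\notin\operatorname{int}\cM(C)_d$, the supporting hyperplane theorem supplies a nonzero linear functional $L:\skinnyS_{2d}\to\R$ with $L(y)\le L(1)$ for all $y\in\cM(C)_d$; applying this to $y=ty_0$ with $t\to\infty$ and using that $\cM(C)_d$ is a cone forces $L(\cM(C)_d)\subseteq\R_{\le 0}$ and $L(1)=0$. Then $-L\in(\cM(C)_d)^\vee$ is nonzero and vanishes at $1$, directly contradicting Lemma~\ref{l:L0}. Once $1\in\operatorname{int}\cM(C)_d$, for $m<0$ with $|m|$ large the element $1+a/|m|$ is arbitrarily close to $1$ and thus lies in the interior; since positive scaling preserves the interior of a cone, $a-m=|m|\bigl(1+a/|m|\bigr)\in\operatorname{int}\cM(C)_d$.

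Finally, I would transfer this interior membership into a strictly feasible Gram representation and appeal to SDP duality. The cone $\cM(C)_d$ is the image of a product of PSD cones $\prod_i\{A_i\succeq 0\}$ under a continuous linear map, and since this image spans $\skinnyS_{2d}$ the map sends the interior of the product cone (where every $A_i\succ 0$) onto $\operatorname{int}\cM(C)_d$. Hence $a-m$ admits a representation
\[
a-m=\sig\bigl({\W^{\fatS}_d}^\star A_0\W^{\fatS}_d\bigr)+\sum_i\sig\bigl({\W^{\fatS}_{d-d_{c_i}}}^\star (c_iA_i)\W^{\fatS}_{d-d_{c_i}}\bigr)
\]
with $A_0,\dots,A_\ell\succ 0$, which is a Slater point for~\eqref{eq:pure_constr_dual}. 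Standard SDP duality then yields no gap between \eqref{eq:pure_constr_dual} and \eqref{eq:pure_constr_primal}. The most delicate step I anticipate is placing $1$ in the \emph{topological} (rather than merely relative) interior of $\cM(C)_d$; this is precisely what Lemma~\ref{l:L0} is designed to achieve.
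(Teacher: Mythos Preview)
Your argument is correct and reaches the same conclusion, but the route differs from the paper's. Both proofs hinge on Lemma~\ref{l:L0}; the difference is where the supporting-hyperplane argument is applied. The paper works directly at the optimal boundary point: since $a-a_{\min,d}$ lies on the boundary of the cone $\cM(C)_d$, a supporting hyperplane there yields a nonzero $L\in(\cM(C)_d)^\vee$ with $L(a-a_{\min,d})=0$; Lemma~\ref{l:L0} then forces $L(1)>0$, so after normalization $L$ is a dual-feasible functional with $L(a)=a_{\min,d}$, closing the gap and exhibiting dual attainment in one stroke. You instead apply the supporting-hyperplane argument at $1$ to show $1\in\operatorname{int}\cM(C)_d$, and then invoke Slater. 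Your path is slightly longer but buys primal feasibility automatically (the paper's proof assumes it), and packages the conclusion in the familiar Slater framework.

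Two remarks on your write-up. First, your spanning step is in fact redundant: Lemma~\ref{l:L0} alone already rules out any nonzero $L$ vanishing on $\cM(C)_d$, hence $\cM(C)_d$ is full-dimensional; your supporting-hyperplane contradiction then immediately gives $1\in\operatorname{int}\cM(C)_d$. Second, the assertion that the Gram map sends the interior of the product PSD cone \emph{onto} $\operatorname{int}\cM(C)_d$ is true but not entirely trivial; you might either supply the one-line argument (given $y\in\operatorname{int}\cM(C)_d$ and any $x_0$ in the interior of the product cone, write $y-\varepsilon\Phi(x_0)=\Phi(x_1)$ for small $\varepsilon$ and note $x_1+\varepsilon x_0$ is interior), or bypass it by observing that once $a-m\in\operatorname{int}\cM(C)_d$ one can take any PSD representation and add $\varepsilon I$ to each block, absorbing the resulting perturbation by a further shift in $m$.
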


\begin{proof}
Suppose SDP \eqref{eq:pure_constr_dual} is feasible.
Then $a-a_{\min,d}$ is a boundary point of the cone $\cM(C)_d$ in $\skinnyS_{2d}$. Therefore there is a supporting hyperplane for $\cM(C)_d$ through $a-a_{\min,d}$. In other words, there is a nonzero linear functional $L\in (\cM(C)_d)^\vee$ such that 
$L(a-a_{\min,d})=0$. By Lemma \ref{l:L0} we have $L(1)>0$. After rescaling we can then assume that $L(1)=1$, and so $L(a-a_{\min,d})=0$ implies 
$L(a) = a_{\min,d}$. 
Therefore there is no duality gap.
\end{proof}

\begin{remark}
The condition on the constraint set $C$ in Proposition \ref{p:nogap} is stronger that algebraic boundedness.
Nevertheless, it is satisfied in many prominent instances, for example if $C$ contains $\pm (x_j^2-1)$ for all $j$ (optimization over binary observables) as in Section \ref{sec:bell} below, or if $C$ contains $\pm (x_j^2-x_j)$ for all $j$ (optimization over projections) as in Section \ref{sec:networks}.
Furthermore, if $C$ is algebraically bounded, or $\cD_C^\infty$ is known to be bounded in operator norm, then we can simply add the desired constraint directly to $C$, thus fulfilling the assumption of Proposition \ref{p:nogap} without affecting $\cD_C^\infty$.
\end{remark}

\subsection{Minimizer extraction}
\label{sec:gns}
The goal of this subsection is to derive an algorithm to extract minimizers and certify exactness of \state polynomial optimization problems. 
The forthcoming statements can be seen as \state variants of the results derived in the context of commutative polynomials \cite{curto1998flat}, eigenvalue optimization of noncommutative  polynomials \cite{pironio2010convergent,burgdorf16}, and optimization of trace polynomials~\cite{KMV,nctrace}.

\begin{definition}
\label{def:flatextension}
Suppose $L : \skinnyS_{2d + 2 \delta} \to \R$ is a linear functional. 
We associate to $L$ and $L|_{\skinnyS_{2d}}$ the Hankel matrices $\M_{d+\delta}(L)$ and $\M_d(L|_{\skinnyS_{2d}})$ respectively, and get the block form
\[
\M_{d + \delta}(L) = \begin{bmatrix}
\M_d(L|_{\skinnyS_{2d}}) & B \\[1mm]
B^T & B'
\end{bmatrix} \,.
\]
We say that $L$ is \emph{$\delta$-flat} or that $L$ is a \emph{$\delta$-flat extension} of $L|_{\skinnyS_{2d}}$, if $\M_{d+\delta}(L)$ is flat over $\M_d(L|_{\skinnyS_{2d}})$, i.e., if $\rank \M_{d+\delta}(L) = \rank \M_d(L|_{\skinnyS_{2d}}) $.
\end{definition}

Suppose $L$ is $\delta$-flat and let $r := \rank \M_{d}(L|_{\skinnyS_{2d}}) = \M_{d + \delta}(L)$.
Since $\M_{d+\delta}(L) \succeq 0$, 
we obtain the Gram matrix decomposition 
$ \M_{d + \delta}(L) = [\langle \u , \mathbf{v} \rangle]_{u,v}$ with vectors $\u,\mathbf{v} \in \R^r$, where the labels $u,v$ are $\fatS$-words of degree at most $d + \delta$. 
Then, we define the following $r$-dimensional Hilbert space  
\[
\cK := \text{span} \, \{\u \colon \deg u \leq d + \delta \} = \text{span} \, \{\u \colon \deg u \leq d \} ,
\]
where the equality is a consequence of the flatness assumption.
Let $\skinnyS_{(\delta)}$ denote the subalgebra of $\skinnyS$ generated by $\sig(w)$ for $w\in \mx_\delta\setminus\{1\}$, and denote $\fatS_{(\delta)}=\skinnyS_{(\delta)}\otimes \RX\subset\fatS$.
Each $p \in \fatS_{(\delta)}$ gives rise to the multiplication operator $X_p$ on $\cK$, which leads to the $\star$-representation $\pi : \fatS_{(\delta)} \to \cB(\cK)$ defined by $\pi(p) = X_p$.
Let $\vb$ be the vector representing 1 in $\cK$; then $L(p) = \langle \pi(p) \vb, \vb \rangle$ for all $p \in \fatS_{(\delta)}$. 
In general, elements of $\pi(\skinnyS_{(\delta)})$ are central in $\pi(\fatS_{(\delta)})$; if they are actually scalar multiples of the identity on $\cK$, then $\pi$ is not just a $\star$-representation, but it respects the state symbol in the sense that $\pi(f)=f(\pi(x_1),\dots,\pi(x_n))$ for every $f\in\fatS_{(\delta)}$. 
This fact applies to our SDP hierarchy as follows.

\begin{proposition}\label{prop:st_flat}
Let $a\in\skinnyS$, suppose that $C\subseteq\fatS$
satisfies the assumptions of Proposition \ref{p:nogap}, and let $d,\delta\in\N$ be such that $d,\delta\ge d_a,d_c$ for $c\in C$.
Assume that $L$ is a $\delta$-flat optimal solution of SDP~\eqref{eq:pure_constr_primal}, and let the $\star$-representation
$\pi : \fatS_{(\delta)} \to \cB(\cK)$ and the unit vector $\vb\in\cK$ be constructed as above.
If $\pi(\skinnyS_{(\delta)})=\R$, then
\begin{enumerate}[\rm (i)]
\item $(\lambda,\uX)\in\vD_C$ where $\uX=(\pi(x_1),\dots,\pi(x_n))$ 
and $\lambda(Y)=\langle Y\vb,\vb\rangle$;
\item $L(p)=p(\lambda;\uX)$ for all $p\in\skinnyS_{(\delta)}$;
\item $a_{\min,d + \delta} = L(a) = a_{\min}^\infty$.
\end{enumerate}
\end{proposition}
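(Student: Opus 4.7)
The strategy is to combine the GNS/flat-extension construction with the scalar hypothesis $\pi(\skinnyS_{(\delta)})=\R$ and strong duality (Proposition \ref{p:nogap}). First, I would verify that the multiplication-operator prescription extends consistently to a bounded $\star$-representation $\pi\colon\fatS_{(\delta)}\to\cB(\cK)$: flatness of $\M_{d+\delta}(L)$ over $\M_d(L|_{\skinnyS_{2d}})$ is precisely what allows left multiplication by $x_j$ and by $\sig(u)$ (for $|u|\le\delta$) on the spanning vectors $\{\pi(u)\vb:\deg u\le d\}$ to define well-defined bounded operators on $\cK$, in the spirit of the nc flat-extension arguments of \cite{pironio2010convergent, burgdorf16, KMV}; the operators $\pi(x_j)$ are self-adjoint because $x_j^\star=x_j$ and the inner product on $\cK$ comes from the symmetric Hankel matrix.

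Second, I would use the scalar hypothesis to identify $\pi$ on state symbols. For each $w\in\mx\setminus\{1\}$ with $|w|\le\delta$, the element $\sig(w)$ lies in $\skinnyS_{(\delta)}$, so $\pi(\sig(w))=\alpha_w I$ for some $\alpha_w\in\R$. Reading off $\alpha_w$ against $\vb$ and using the Hankel identities, $\alpha_w = \langle\pi(\sig(w))\vb,\vb\rangle = L(\sig(w))$ and also $L(\sig(w)) = (\M_{d+\delta}(L))_{1,w} = \langle\pi(w)\vb,\vb\rangle = \lambda(w(\uX))$, so $\pi(\sig(w)) = \lambda(w(\uX))\,I$. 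This shows $\pi$ respects the state symbol, namely $\pi(f)=f(\lambda;\uX)$ for every $f\in\fatS_{(\delta)}$. Specializing to $f=p\in\skinnyS_{(\delta)}$ and pairing with $\vb$ yields $L(p)=p(\lambda;\uX)$, which is (ii).

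Third, for (i), the primal feasibility of $L$ in SDP \eqref{eq:pure_constr_primal} at level $d+\delta$ gives $\M_{d+\delta-d_c}(c\,L)\succeq0$ for each $c\in\{1\}\cup C$ (using $\delta\ge d_c$). By Lemma \ref{lemma:pureHankel} this is equivalent to $\langle\pi(c)\pi(f)\vb,\pi(f)\vb\rangle\ge0$ for all $f\in\fatS$ with $\deg f\le d+\delta-d_c$. Since flatness makes $\cK$ already spanned by $\{\pi(f)\vb:\deg f\le d\}$ and $d\le d+\delta-d_c$, this forces $\pi(c)=c(\lambda;\uX)\succeq0$ on all of $\cK$ for symmetric $c\in C$; for non-symmetric $c$, the balanced pair $\pm c\in C$ jointly pushes $c(\lambda;\uX)=0$. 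Hence $(\lambda,\uX)\in\vD_C$.

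Finally, (iii) assembles the pieces. The hypothesis on $C$ activates Proposition \ref{p:nogap}, so strong duality at level $d+\delta$ gives $a_{\min,d+\delta}=L(a)$; by (ii), $L(a)=a(\lambda;\uX)$; by (i), the pair $(\lambda,\uX)$ lies in $\vD_C\subset\cD_C^\infty$, hence $a(\lambda;\uX)\ge a_{\min}^\infty$. Combined with the reverse inequality $a_{\min,d+\delta}\le a_{\min}^\infty$ from Corollary \ref{cor:pure_cvg}, equality propagates throughout. The main hurdle is step 1: one must check that multiplication by $\sig(u)$, which raises degree by $|u|$ rather than by $1$, still preserves the flat span and yields a well-defined operator on $\cK$; this rests on the flat extension reaching up to level $d+\delta$ together with the centrality of $\sig(u)$, which pins down the operator from its action on the spanning set.
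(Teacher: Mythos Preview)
Your proposal is correct and follows essentially the same approach as the paper's own proof: build the GNS representation from the flat Hankel matrix, use the scalar hypothesis to identify $\pi(\sig(w))$ with $\lambda(w(\uX))$ and obtain (ii), invoke the localizing-matrix positivity for (i), and combine strong duality (Proposition~\ref{p:nogap}) with Corollary~\ref{cor:pure_cvg} for (iii). Your write-up is in fact more careful than the paper's terse version---you spell out why $\{\pi(f)\vb:\deg f\le d\}$ already spans $\cK$ so that $\M_{d+\delta-d_c}(c\,L)\succeq0$ suffices for $\pi(c)\succeq0$, and you handle the non-symmetric balanced constraints explicitly---but the underlying argument is the same.
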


\begin{proof}
As seen in the paragraph before Proposition \ref{prop:st_flat},
the operators $X_i$ are well-defined (thanks to the flatness assumption) and symmetric. After choosing an orthonormal basis of $\cK$ we can view $X_i$ as $r\times r$ symmetric matrices.
Moreover, $L(p)=\lambda(p)$ for all $p\in\skinnyS_{(\delta)}$.
Since $\pi(\skinnyS_{(\delta)})=\R$, we furthermore have 
$L(p)=p(\lambda;\uX)$ for all $p\in\skinnyS_{(\delta)}$, so (ii) holds.
For $c \in C$, one in particular has $c(\lambda;\uX) = L(c) \geq 0$ 
because $\M_{d - d_c}(c \, L) \succeq 0$ as $L$ is a feasible solution of SDP~\eqref{eq:pure_constr_primal}.
Hence (i) holds. 
Proposition \ref{p:nogap} implies 
$ a_{\min,d + \delta} = L(a) \leq a_{\min}^\infty$; 
on the other hand, $a_{\min}^\infty \leq  a(\lambda;\uX) = L(a)$, 
and therefore (iii) holds.
\end{proof}

\begin{remark}
The condition $\pi(\skinnyS_{(\delta)}) = \R$ in Proposition \ref{prop:st_flat} in particular holds if $L$ is an extreme optimal solution of \eqref{eq:pure_constr_primal}.	
In practice modern SDP solvers rely on interior-point methods using the so-called ``self-dual embedding'' technique \cite[Chapter 5]{wolkowicz2012handbook}. 
Therefore, they will always converge towards an optimum solution of maximum rank; 
see \cite[\S4.4.1]{lasserre2008semidefinite} for more  details. 
In order to obtain a posteriori an extreme linear functional, a commonly used heuristic to minimize the rank is to minimize the trace of the moment matrix over the same constraints involved in  SDP~\eqref{eq:pure_constr_primal} together with the additional constraint $a_{\min,d + \delta} = L(a)$. 
\end{remark}

\subsection{Reduction by exploiting sparsity}\label{ssec:sparse}
In this subsection, we briefly introduce the approaches for reducing the sizes of SDP \eqref{eq:pure_constr_primal} and SDP \eqref{eq:pure_constr_dual} by exploiting sparsity encoded in the state polynomial optimization problem, which are adapted from the case of eigenvalue and trace optimization over nc polynomials \cite{klep2022sparse,wang2021exploiting}. 

\subsubsection{Correlative sparsity}
For $I\subseteq[n]:=\{1,\ldots,n\}$, let $\skinnyS_{I}\subseteq\skinnyS$ (resp.~$\fatS_{I}\subseteq\fatS$) be the subset of \state
(resp.~\ncstate) polynomials in variables $x_i,i\in I$ only.
Let $I_1,\ldots,I_\ell\subseteq[n]$ be a tuple of index sets and further $J_1,\ldots,J_\ell\subseteq C$ be a partition of the constraint polynomials in $C$ such that
\begin{align}
\label{eq:csp1}
 a\in\skinnyS_{I_1}+\cdots+\skinnyS_{I_\ell}; \\
\label{eq:csp2}
J_k\subseteq\fatS_{I_k} \text{ for } k=1,\ldots,\ell.
\end{align}
The tuple of index sets $I_1,\ldots,I_\ell$ is then called the \emph{correlative sparsity pattern} of \eqref{eq:pure_constr} and \eqref{eq:pure_constr1}. We build the Hankel submatrix $\M_d^{I_k}(L)$ (resp.~the localizing submatrix $\M_{d-d_c}^{I_k}(c\, L)$) with respect to the correlative sparsity pattern by retaining only rows and columns indexed by $\W^{\fatS_{I_k}}_d$ (resp. $\W^{\fatS_{I_k}}_{d-d_c}$) for each $k\in[\ell]$ (resp.~each $c\in J_k$).

Let us consider the correlative sparsity adapted version of \eqref{eq:pure_constr_primal}:
\begin{equation}
\label{eq:pure_constr_primal_cs}
\begin{aligned}
\inf_{\substack{L : \skinnyS_{2 d} \to \R \\ L \emph{ linear}}} \quad  & L(a)  \\	
\rm{s.t.} 
\quad & (\M_d^{I_k}(L))_{u,v} = (\M_d^{I_k}(L))_{w,z}  \,, \quad\text{whenever } \sig(u^\star v) =  \sig( w^\star z), \text{ for }k\in[\ell]\,, \\
\quad & (\M_d(L))_{1,1} = 1 \,, \\
\quad & \M_{d - d_c}^{I_k}(c \, L) \succeq 0  \,,  \quad \text{for all }  c \in \{1\}\cup J_k \text{ with }d_c\le d \text{ and }k\in[\ell]\,,
\end{aligned}
\end{equation}
with optimum denoted by $a_{\min,d}^{\cs}$.

\begin{theorem}\label{t:sparse}
Let $C\subseteq\fatS$ be balanced and algebraically bounded, and let $a \in \skinnyS$. 
Suppose that there exist subsets $I_1,\ldots,I_\ell$ and  $J_1,\ldots,J_\ell$ such that \eqref{eq:csp1} and \eqref{eq:csp2} hold,
and that $I_1,\ldots,I_\ell$ satisfy the running intersection property (RIP), i.e., for every $k\in[\ell-1]$, we have that
\begin{align}\label{eq:RIP}
\left(I_{k+1} \cap \bigcup_{j \leq k} I_j \right)\subseteq I_i \quad \text{for some } i \leq k.
\end{align}
Then $\lim_{d\rightarrow\infty}a_{\min,d}^{\cs}=a_{\min}^{\infty}$.
\end{theorem}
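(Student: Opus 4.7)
The plan is to show $\lim_d a_{\min,d}^{\cs}=a_{\min}^{\infty}$ by a two-sided bound. For the easy upper bound, every functional $L$ feasible for the dense SDP \eqref{eq:pure_constr_primal} is a fortiori feasible for \eqref{eq:pure_constr_primal_cs}, since the sparse Hankel and localizing constraints are principal submatrices of the dense ones; hence $a_{\min,d}^{\cs}\le a_{\min,d}$, and monotonicity $a_{\min,d}^{\cs}\le a_{\min,d+1}^{\cs}$ is immediate from the inclusion on the dual side. Corollary \ref{cor:pure_cvg} then gives $\lim_d a_{\min,d}^{\cs}\le a_{\min}^{\infty}$.

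For the matching lower bound I would extract a single global model from subsequential limits of near-optimal sparse primal solutions and amalgamate under the RIP. Fix $L_d$ near-optimal for \eqref{eq:pure_constr_primal_cs}. Algebraic boundedness of $C$, together with the RIP (and, if necessary, a harmless redistribution of a local bound $N_k-\sum_{i\in I_k}x_i^2$ into the appropriate $J_k$ that does not alter $\cD_C^{\infty}$), ensures each chunk is algebraically bounded in its own variables, so every $L_d|_{\skinnyS_{I_k}\cap\skinnyS_{2d}}$ has uniformly bounded moments. A diagonal Banach--Alaoglu extraction yields a subsequence along which $L_d|_{\skinnyS_{I_k}}$ converges pointwise to $L_\infty^{(k)}:\skinnyS_{I_k}\to\R$ inheriting all the Hankel and localizing positivity conditions. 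Applying Proposition \ref{prop:moment} to each chunk produces a separable Hilbert space $\cH_k$, a cyclic unit vector $v_k\in\cH_k$, and self-adjoint operators $X_j^{(k)}\in\cB(\cH_k)$ ($j\in I_k$) such that the vector state $\lambda_k=\langle(\cdot)v_k,v_k\rangle$ satisfies $c(\lambda_k;X^{(k)})\succeq 0$ for $c\in J_k$ and $\lambda_k(w(X^{(k)}))=L_\infty^{(k)}(\sig(w))$ for all words $w$ supported on $I_k$.

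The main obstacle is then amalgamating these $\ell$ block models into a single global model $(\lambda;\uX)\in\cD_C^{\infty}$. I would proceed by induction on $k$: assuming an amalgamated model on variables $I_1\cup\cdots\cup I_k$ has been built that matches each $L_\infty^{(i)}$ for $i\le k$, use the RIP to pick $i\le k$ with $I_{k+1}\cap(I_1\cup\cdots\cup I_k)\subseteq I_i$. On this overlap, the current model and the $(k{+}1)$-th chunk model produce identical moment data (both equal $L_\infty^{(i)}$ restricted to words in $I_i\cap I_{k+1}$), so essential uniqueness of the cyclic GNS representation yields a unitary equivalence of the two subrepresentations on the overlap. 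Gluing the two representations along this common subrepresentation via the amalgamated free product of $\star$-algebras with a shared state, and then applying GNS to the resulting amalgamated state, produces the stage-$(k{+}1)$ model. This gluing is the only nontrivial step and is directly analogous to the amalgamations used in sparse noncommutative and trace polynomial optimization \cite{klep2022sparse,wang2021exploiting}; the RIP is crucial because it guarantees that the overlap at each inductive step lies in a single chunk, which is exactly what makes the amalgamated free product construction well-defined.

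After $\ell$ steps one obtains $(\lambda;\uX)\in\cD_C^{\infty}$ with $\lambda(w(\uX))=L_\infty^{(k)}(\sig(w))$ for every word $w\in\mx$ lying in some $I_k$. Since $a=\sum_{k=1}^{\ell}a_k$ with $a_k\in\skinnyS_{I_k}$, we conclude
$$
\lim_d a_{\min,d}^{\cs}=\lim_d L_d(a)=\sum_{k=1}^{\ell} L_\infty^{(k)}(a_k)=a(\lambda;\uX)\ge a_{\min}^{\infty},
$$
which combined with the upper bound gives the desired equality.
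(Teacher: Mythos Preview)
Your overall plan---apply Proposition~\ref{prop:moment} to each block to get a GNS model, then amalgamate the blocks under the RIP via a free product with amalgamation---is exactly the scheme the paper follows. The difference is in how the block functionals are produced: the paper argues on the SOS side, proving a sparse Positivstellensatz by separating $a+\varepsilon$ from $\QM(J_1^\sig)+\cdots+\QM(J_\ell^\sig)$ with a Hahn--Banach (Eidelheit--Kakutani) functional and then building a contradicting model; you instead extract the block functionals as subsequential limits of near-optimal primal SDP solutions. Both routes rely on the amalgamation theorem from \cite{klep2022sparse} for the gluing, so in that sense the core idea is the same.

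There is, however, a real gap in your final step $\sum_k L_\infty^{(k)}(a_k)=a(\lambda;\uX)$. Your $L_\infty^{(k)}$ is only a \emph{linear} functional on $\skinnyS_{I_k}$, whereas evaluation of a state polynomial at $(\lambda;\uX)$ is \emph{multiplicative} in the state symbols. For $a_k=\sig(w_1)\sig(w_2)$ one has
\[
a_k(\lambda;\uX)=\lambda(w_1(\uX))\,\lambda(w_2(\uX))
=L_\infty^{(k)}(\sig(w_1))\cdot L_\infty^{(k)}(\sig(w_2)),
\]
not $L_\infty^{(k)}\big(\sig(w_1)\sig(w_2)\big)$, and nothing in the PSD Hankel or localizing constraints of~\eqref{eq:pure_constr_primal_cs} forces these two numbers to agree. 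Proposition~\ref{prop:moment} is applied only to $p\mapsto L_\infty^{(k)}(\sig(p))$ on $\RXk$, so it uses just the degree-one (in state symbols) part of $L_\infty^{(k)}$; all information about products $\sig(u)\sig(v)\cdots$ in $a_k$---and likewise in any constraint $c\in J_k\setminus\RX$, so even your claim $c(\lambda_k;X^{(k)})\succeq0$ is unjustified for such $c$---is discarded when you build the model. This is precisely where state polynomial optimization differs from the nc and trace settings of \cite{klep2022sparse,wang2021exploiting}: there the objective lives in $\RX$ (or its cyclic quotient), so a linear functional on it already carries all the needed model data; here the objective lives in the polynomial ring $\skinnyS$, and one needs a ring \emph{homomorphism} $\skinnyS_{I_k}\to\R$. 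In the dense case (Theorem~\ref{thm:nocyc}) that homomorphism comes from Kadison--Dubois (Proposition~\ref{prop:KD}) applied to the archimedean module $\QM(C^\sig)$. Your compactness argument does not supply this multiplicativity, and without it the conclusion $a(\lambda;\uX)=\lim_d L_d(a)$ does not follow.
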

\begin{proof}
Let us define 
\[\QM(C^\sig)^{\cs} := \QM(J_1^\sig) + \dots + \QM(J_\ell^\sig).\]
To obtain the convergence result, we need to prove a sparse analog of Theorem \ref{thm:nocyc}, namely 
$a(\lambda;\uX)\geq 0$ for all $(\lambda;\uX) \in \vD_C^\infty$ implies $a + \varepsilon \in \QM(C^\sig)^{\cs}$, for all $\varepsilon > 0$.
This sparse representation result requires an adaptation of 
\cite[Theorem~3.3]{klep2022sparse}, so we only sketch the main steps while emphasizing changes required. 
The proof is by contradiction, so we suppose that $a + \varepsilon \not\in \QM(C^\sig)^{\cs}$ for some $\varepsilon > 0$. 
By the algebraically bounded assumption, 
each quadratic module $\QM(J_k^\sig)$ in $\skinnyS_{I_k}$ is archimedean (see Lemma \ref{l:arch}). In
particular,
$1$ is
an algebraic interior point of 
$\QM(C^\sig)^{\cs}$ in
$\skinnyS_{I_1}+\cdots+\skinnyS_{I_\ell}$. Thus
by the Eidelheit-Kakutani separation theorem (a version of the Hahn-Banach separation theorem suitable for this context; see \cite[Corollary~III.1.7]{Bar02}), 
there exists a unital linear functional $\varphi : \skinnyS_{I_1}+\cdots+\skinnyS_{I_\ell} \to \R$ with $\varphi(\QM(C^\sig)^{\cs}) \subseteq \R_{\geq 0}$ and $\varphi(a) < 0$. We pick any extension of $\varphi$ to a linear functional on $\skinnyS$ which we again denote by $\varphi$.
Let $L: \RX \to \R$ be the unital $\star$-functional given by $L(p) := \varphi (\sig(p))$, and let us denote by $L^k$ the restriction of $L$ to $\RXk$.

Then we proceed exactly as in the proof of Theorem \ref{thm:nocyc} for each $k \in \{1,\ldots,\ell \}$.
Since each quadratic module $\QM(J_k^\sig)\subseteq \skinnyS_{I_k}$ is archimedean, one can apply Proposition \ref{prop:moment} to obtain a vector state $\lambda_k \in\cS(\cH_k)$ and a tuple of self-adjoint operators $\uX^k\in\cB(\cH_k)^{|I_k|}$ such that 
$L^k(p)=\lambda_k(p(\uX^k))$ for all $p\in\RXk$, and $(\lambda_k, \uX^k) \in \vD_{J_k}^\infty$.
Here $\cH_k$ denotes the Hilbert space completion of the quotient of $\RXk$ by the set of nullvectors corresponding to $L^k$, obtained through the GNS construction.
Now, the proof proceeds by induction on $\ell$ to show that there exist a vector state $\lambda$ and a tuple of self-adjoint operators $\uX$ such that 
$b(\lambda,\uX)=\varphi (b)$ for all $b  \in \skinnyS_{I_1}+\dots+\skinnyS_{I_\ell}$, and $(\lambda, \uX) \in \vD_{C}^\infty$.

We focus specifically on the case $\ell = 2$, as the general case then follows by an  inductive argument relying on the running intersection property, similarly to the proof of \cite[Theorem~3.3]{klep2022sparse}. 
We denote by $L^{12}$ the restriction of $L^1$ to $\RXonetwo$ and again we apply Proposition \ref{prop:moment} to obtain a vector state $\lambda_{12} \in\cS(\cH_{12})$ and self-adjoint $\uX^{12}\in \cB(\cH_{12})^{|I_1\cap I_2|}$ such that 
$L^{12}(p)=\lambda_{12}(p(\uX^{12}))$, for all $p \in \RXonetwo$.
For $k \in \{1,2\}$, we denote by $i_k$  the canonical embedding from $\RXonetwo$ to $\RXk$.
Let $\iota_k$ be the canonical embedding from $\cB(\cH_{12})$ to $\cB(\cH_{k})$, satisfying $\iota_k(X_i^{12}) = X_i^k$ for all $i \in I_1\cap I_2$.
Then we apply \cite[Theorem~3.1]{klep2022sparse} to obtain an amalgamation $\cA$ with state $\lambda$ and homomorphisms $j_k : \cB(\cH_k) \to \cA$ such that $j_1 \circ \iota_1 = j_2 \circ \iota_2$.
After performing the GNS construction with $(\cA, \lambda)$, we obtain a Hilbert space $\cK$, a representation $\pi : \cA \to \cB(\cK)$ and a vector $\xi \in \cK$ so that $\lambda(b) = \langle \pi(b) \xi, \xi \rangle$. 
We next define $\uX := (X_1,\dots, X_n)$, with $X_i := \pi( j_1(X^1_i) )$ if $i \in I_1$ and  $X_i := \pi( j_2(X^2_i) )$ otherwise. 
This tuple of operators is well-defined thanks to the amalgamation property.

Let us now set $\tilde L (p) := \langle p(\uX) \xi, \xi \rangle$, for all $p \in \RX$.
We prove that $\tilde L$ agrees with $L^k$ (as well as $L$) on $\RXk$ thanks to the amalgamation setup.
Indeed, for all $p \in \RXk$ one has 
\[
\begin{split}
\tilde L(p)&=\langle p(\uX) \xi, \xi \rangle
= \langle p(\pi(j_k(\uX^k))) \xi, \xi \rangle
= \langle \pi(p(j_k(\uX^k))) \xi, \xi \rangle \\
& = \lambda(p(j_k(\uX^k))) = 
\lambda(j_k(p(\uX^k))) =
\lambda_k (p(\uX^k)) = L^k(p) = L(p).
\end{split}
\]
Hence, this yields $\varphi(b) =  b(\lambda,\uX)$ for all $b \in \skinnyS_k$ and by linearity of $\varphi$, we obtain $\varphi(b) =  b(\lambda,\uX)$ for all $b \in \skinnyS_1+\skinnyS_2$. 
To show that $(\lambda, \uX) \in \vD_{C}^\infty$, we proceed as in the proof of Theorem~\ref{thm:nocyc}.
\end{proof}

\subsubsection{Sign symmetry}\label{sec:ss}
For $a\in\fatS$ and a binary vector $s\in\{0,1\}^{n}$, let $[a]_s\in\fatS$ be defined by $[a]_s(x_1,\ldots,x_n):=a((-1)^{s_1}x_1,\ldots,(-1)^{s_n}x_n)$.
Then $a$ is said to have the sign symmetry represented by a binary vector $s\in\{0,1\}^{n}$ if $[a]_s=a$. We use $S(a)\subseteq\{0,1\}^{n}$ to denote all sign symmetries of $a$ and let $S(C):=\cap_{c\in C}S(c)$ for $C\subseteq\fatS$. 
Denote
\begin{equation*}
    \cU=\{u\in \W_{2d}^{\fatS}\colon S(\{a\}\cup C)\subseteq S(u)\},
\end{equation*}
and let $\skinnyS_\cU\subseteq\skinnyS_{2d}$ be the span of $\{\sig(u)\colon u\in\cU\}$.
Consider the optimization problem given by \eqref{eq:pure_constr1}. We can build a block-diagonal SDP hierarchy for \eqref{eq:pure_constr1} by exploiting its sign symmetries. To this end, we define an equivalence relation $\sim$ on $\W^{\fatS}_d$ by
\begin{equation}\label{eq:equivalence}
    u\sim v \iff u^\star v \in \mathcal{U}.
\end{equation}
The equivalence relation $\sim$ gives rise to a partition of $\W^{\fatS}_d$:
\begin{equation}
    \W^{\fatS}_d=\bigsqcup_{i=1}^{p_d} \W^{\fatS}_{d, i}.
\end{equation}
We build the Hankel submatrix $\M_{d,i}(L)$ (resp.~the localizing submatrix $\M_{d-d_c,i}(c\, L)$) with respect to the sign symmetry by retaining only those rows and columns that are indexed by $\W^{\fatS}_{d, i}$ (resp. $\W^{\fatS}_{d-d_c,i}$) for each $i\in[p_d]$ (resp.~$i\in[p_{d_c}]$).

Let us consider the sign symmetry adapted version of \eqref{eq:pure_constr_primal}:
\begin{equation}
\label{eq:pure_constr_primal_ss}
\begin{aligned}
\inf_{\substack{L : \skinnyS_\cU \to \R \\ L \emph{ linear}}} \quad  & L(a)  \\	
\rm{s.t.} 
\quad & (\M_{d,i}(L))_{u,v} = (\M_{d,i}(L))_{w,z}  \,, \quad\text{whenever } \sig(u^\star v) =  \sig( w^\star z), \text{ for }i\in[p_d]\,, \\
\quad & L(1) = 1 \,, \\
\quad & \M_{d - d_c,i}(c \, L) \succeq 0  \,,  \quad \text{for all }  c \in \{1\}\cup C \text{ with }d_c\le d \text{ and }i\in[p_{d_c}]\,,
\end{aligned}
\end{equation}
with optimum denoted by $a_{\min,d}^{\rm{ss}}$.

\begin{theorem}\label{t:sign}
We have that
$a_{\min,d}^{\rm{ss}}=a_{\min,d}$.
\end{theorem}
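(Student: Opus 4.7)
The plan is to prove both inequalities $a_{\min,d}^{\rm{ss}} \le a_{\min,d}$ and $a_{\min,d}^{\rm{ss}} \ge a_{\min,d}$ by exploiting the $G$-action, where $G := S(\{a\}\cup C)$. For $s \in G$ and a word $w \in \mx$ one has $[w]_s = \chi_w(s)\, w$ with $\chi_w(s) = (-1)^{\sum_i s_i k_i(w)}$, where $k_i(w)$ counts occurrences of $x_i$ in $w$. This character extends multiplicatively to $\skinnyS$-words via $\chi_{\prod_j\sig(u_j)} = \prod_j \chi_{u_j}$ and to $\fatS$-words via $\chi_{\prod_j\sig(u_j)v} = \chi_v\prod_j\chi_{u_j}$; the key algebraic fact I will use is the product rule $\chi_{\sig(pq)} = \chi_p\chi_q$ for $\fatS$-words $p$ and $q$, which follows by commuting state symbols past monomials and using $\chi_{w^\star}=\chi_w$. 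The subspace $\skinnyS_\cU$ is precisely the span of $\skinnyS$-words with trivial character, i.e., the $G$-fixed part of $\skinnyS_{2d}$, and the equivalence \eqref{eq:equivalence} declares $u \sim v$ iff $\chi_u = \chi_v$.

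For the direction $a_{\min,d}^{\rm{ss}} \le a_{\min,d}$, any feasible $L$ of \eqref{eq:pure_constr_primal} restricts to a feasible solution $L|_{\skinnyS_\cU}$ of \eqref{eq:pure_constr_primal_ss}: each block $\M_{d - d_c, i}(c\,L)$ is a principal submatrix of $\M_{d - d_c}(c\,L) \succeq 0$ and hence PSD, the Hankel conditions and normalization descend directly, and $a \in \skinnyS_\cU$ by $G$-invariance of $a$, so the objective values coincide.

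For the reverse direction, given any feasible $L$ of \eqref{eq:pure_constr_primal_ss}, I extend it to $L' : \skinnyS_{2d} \to \R$ by setting $L'(m) = L(m)$ on $\skinnyS$-words $m$ with $\chi_m = 1$ and $L'(m) = 0$ otherwise; the objective, Hankel and normalization conditions of \eqref{eq:pure_constr_primal} are then immediate. The crux is to verify $\M_{d-d_c}(c\,L') \succeq 0$, which I achieve by showing that this matrix is block-diagonal with respect to the partition $\bigsqcup_i \W^{\fatS}_{d-d_c,i}$, with each diagonal block equal to the PSD block $\M_{d-d_c,i}(c\,L)$. Concretely, for $u \in \W^{\fatS}_{d-d_c,i}$ and $v \in \W^{\fatS}_{d-d_c,j}$ with $i\ne j$ one has $\chi_u \ne \chi_v$; expanding $c = \sum_k \beta_k c_k$ into $\fatS$-words, each summand $c_k$ satisfies $\chi_{c_k}=1$ by $G$-invariance of $c$, and by the multiplicativity rule every $\skinnyS$-word appearing in the expansion of $\sig(u^\star c_k v)$ carries character $\chi_u\chi_{c_k}\chi_v = \chi_u\chi_v \ne 1$, whence $L'(\sig(u^\star c v)) = 0$. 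The main obstacle is establishing the multiplicativity $\chi_{\sig(u^\star c v)} = \chi_u\chi_c\chi_v$ precisely at the level of $\skinnyS$-words that appear after fully expanding the product and applying $\sig$: this requires careful bookkeeping of how state symbols commute out of $\fatS$-word products and how $\sig$ acts on the remaining monomial part, but once in hand the block-diagonal reduction is immediate.
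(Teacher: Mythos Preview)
Your proof is correct and follows the same overall two-inequality strategy as the paper. The $\ge$ direction is identical: both you and the paper extend a feasible $L:\skinnyS_\cU\to\R$ by zero to $\skinnyS_{2d}$, with your block-diagonality argument making explicit what the paper dismisses as ``easily check''.

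The only genuine difference is in the $\le$ direction. The paper first symmetrizes an optimal $L$ over the group $G=S(\{a\}\cup C)$ to obtain $L'=\frac{1}{|G|}\sum_{s\in G}L^s$, shows that $L'$ remains optimal and vanishes on $\skinnyS$-words with nontrivial character, and then restricts $L'$ to $\skinnyS_\cU$. You bypass the symmetrization entirely: since each $\M_{d-d_c,i}(c\,L)$ is a principal submatrix of $\M_{d-d_c}(c\,L)\succeq0$ and its entries lie in $\skinnyS_\cU$ (because $u\sim v$ and $\chi_{c_k}|_G=1$ force $\sig(u^\star c_k v)\in\skinnyS_\cU$), the restriction $L|_{\skinnyS_\cU}$ is immediately feasible with the same objective. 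Your route is shorter; the paper's symmetrization has the mild advantage of producing an optimal solution of \eqref{eq:pure_constr_primal} that is already supported on $\skinnyS_\cU$, which can be conceptually useful but is not needed for the equality $a_{\min,d}^{\rm ss}=a_{\min,d}$.
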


\begin{proof}
For a linear functional $L : \skinnyS_{2 d} \to \R$ and $s\in\{0,1\}^{n}$, let 
$L^s : \skinnyS_{2 d} \to \R$ be another linear functional given by $L^s(u)=L([u]_s)$. Suppose that $L$ is an optimal solution of \eqref{eq:pure_constr_primal} and let $L'=\frac{1}{|S(\{a\}\cup C)|}\sum_{s\in S(\{a\}\cup C)}L^s$ which is also an optimal solution of \eqref{eq:pure_constr_primal}. We claim that $L'(\sig(u^\star v))=0$ whenever $u\nsim v\in\W^{\fatS}_d$. By \eqref{eq:equivalence}, if $u\nsim v$, then there exists $s'\in S(\{a\}\cup C)$ such that $[u^\star v]_{s'}=-u^\star v$. We then have
\begin{align*}
    L'(\sig(u^\star v))&=\frac{1}{|S(\{a\}\cup C)|}\sum_{s\in S(\{a\}\cup C)}L^s(\sig(u^\star v))=-\frac{1}{|S(\{a\}\cup C)|}\sum_{s\in S(\{a\}\cup C)}L^s(\sig([u^\star v]_{s'}))\\
    &=-\frac{1}{|S(\{a\}\cup C)|}\sum_{s\in S(\{a\}\cup C)}L^{s+s'}(\sig(u^\star v))=-\frac{1}{|S(\{a\}\cup C)|}\sum_{s\in S(\{a\}\cup C)}L^s(\sig(u^\star v))\\
    &=-L'(\sig(u^\star v)).
\end{align*}
Thus $L'(\sig(u^\star v))=0$ as desired. From this we see that the restriction of $L'$ to $\skinnyS_\cU$ is a feasible solution of \eqref{eq:pure_constr_primal_ss} and so $a_{\min,d}^{\rm{ss}}\le a_{\min,d}$.

On the other hand, let $L:\skinnyS_\cU\to\R$ be an optimal solution of \eqref{eq:pure_constr_primal_ss}. We define a functional $L':\skinnyS_{2d}\to\R$ as follows:
\begin{equation*}
    L'(\sig(u))=\begin{cases}
    L(\sig(u)),&\text{ if }u\in\cU,\\
    0,&\text{ otherwise}.
    \end{cases}
\end{equation*}
One can easily check that $L'$ is a feasible solution of \eqref{eq:pure_constr_primal}. So $a_{\min,d}^{\rm{ss}}\ge a_{\min,d}$ and it follows $a_{\min,d}^{\rm{ss}}= a_{\min,d}$ as desired.
\end{proof}

\begin{remark}
In nc polynomial optimization, the exploitation of sign symmetries can be extended to an iterative procedure via the more general notion of ``term sparsity'' \cite{wang2021exploiting}. 
Due to the multitude of technical details and overhead involved,
the state polynomial version of term sparsity will be explored elsewhere in the future.
\end{remark}

\subsection{Complexification}\label{ss:cx}

\def\sigr{\sig^{\rm re}}
\def\sigi{\sig^{\rm im}}

In this section we detail how to suitably modify state polynomials, quadratic modules $\QM(C^\sig)$ and Theorem \ref{thm:nocyc} to study state polynomial positivity and optimization on states and bounded operators on complex Hilbert spaces.

Instead of a single formal state symbol $\sig(w)$ for $w\in\mx\setminus\{1\}$, we introduce two formal symbols $\sigr(u)$ for $u\in\mx\setminus\{1\}$ and $\sigi(v)$ for $v\neq v^\star\in\mx\setminus\{1\}$, subject to the relations $\sigr(u^\star)=\sigr(u)$ and $\sigi(v^\star)=-\sigi(v)$. Define
\begin{align*}
\skinnyS^\C &= \C\big[\sigr(u),\sigi(v)\colon u\in\mx\setminus\{1\}, 
v\neq v^\star\in\mx\setminus\{1\}\big],\\
\fatS^\C &= \skinnyS^\C\otimes_{\R}\RX=\skinnyS^\C\otimes_{\C}\CX.
\end{align*}
There is a natural involution $\star$ on $\fatS^\C$ which acts as complex conjugation on $\C$ and fixes the generators $x_j,\sigr(u),\sigi(v)$. Let $\Sym\skinnyS^\C$ and $\Sym\fatS^\C$ denote the real vector spaces of $\star$-fixed elements in $\skinnyS^\C$ and $\fatS^\C$, respectively.
Note that
$$\Sym\skinnyS^\C=\R\big[
\sigr(u),\sigi(v)\colon u\in\mx\setminus\{1\}, 
v\neq v^\star\in\mx\setminus\{1\}\big]=:\skinnyS^\R$$
is an infinitely generated real polynomial ring.
Define the $\skinnyS^\C$-linear unital map $$\sig:\fatS^\C\to\skinnyS^\C$$
determined by $\sig(w)=\sigr(w)+i\sigi(w)$ for $w\in\mx\setminus\{1\}$. Then $\sig(f^\star)=\overline{\sig(f)}$ for $f\in\fatS^\C$.
Furthermore, taking the real or the imaginary part of the coefficients of elements in $\fatS^\C$ gives rise to the $\R$-linear maps
$$\re,\im:\fatS^\C\to \skinnyS^\R\otimes_\R \RX.$$
With a slight abuse of notation, we extend the symbols $\sigr,\sigi$ to $\skinnyS^\R$-linear maps $\fatS^\C\to \skinnyS^\R$ given by $\sigr=\re\circ\,\sig$ and $\sigi=\im\circ\,\sig$. Then $\sigr(f^\star)=\sigr(f)$ and $\sigi(f^\star)=-\sigi(f)$ for $f\in\fatS^\C$. A direct calculation gives the following.

\begin{lemma}\label{l:c2r}
Let $c\in\Sym\fatS^\C$ and $p\in\fatS^\C$. Then
$$\sig(pcp^\star)=
\sigr\big((\re p)c(\re p)^\star\big)
+\sigr\big((\im p)c(\im p)^\star\big)
+\sigi\big((\re p)c(\im p)^\star\big)
-\sigi\big((\im p)c(\re p)^\star\big).
$$
\end{lemma}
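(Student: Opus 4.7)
The plan is a direct computation exploiting the $\C$-linearity of $\sig$ together with the basic symmetry rules $\sigr(f^\star)=\sigr(f)$ and $\sigi(f^\star)=-\sigi(f)$.

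First I would write $p=\re p+i\,\im p$ with $\re p,\im p\in\skinnyS^\R\otimes_\R\RX$, so that $p^\star=(\re p)^\star-i(\im p)^\star$ (the conjugation flips the sign of $i$ since $\star$ acts as complex conjugation on $\C$, while $\re p$ and $\im p$ have real coefficients). Expanding yields
\begin{align*}
pcp^\star &= (\re p)c(\re p)^\star + (\im p)c(\im p)^\star \\
&\quad + i\bigl[(\im p)c(\re p)^\star - (\re p)c(\im p)^\star\bigr].
\end{align*}

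Next I apply $\sig$. Since $\sig$ is $\skinnyS^\C$-linear, in particular $\C$-linear, it distributes over this sum and pulls the scalar $i$ through. Now I would observe two key symmetry facts. The first is that $(\re p)c(\re p)^\star$ is $\star$-fixed (because $c^\star=c$ and $((\re p)^\star)^\star=\re p$), and similarly for $(\im p)c(\im p)^\star$; for any $\star$-fixed $f$, $\sig(f^\star)=\overline{\sig(f)}$ forces $\sig(f)\in\skinnyS^\R$, so $\sig(f)=\sigr(f)$ and the imaginary part vanishes. The second is that the two cross terms are mutual $\star$-conjugates: $\bigl((\im p)c(\re p)^\star\bigr)^\star=(\re p)c(\im p)^\star$. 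Setting $z:=\sig\bigl((\im p)c(\re p)^\star\bigr)$, this gives $\sig\bigl((\re p)c(\im p)^\star\bigr)=\bar z$, hence
\begin{equation*}
i\bigl[\sig\bigl((\im p)c(\re p)^\star\bigr)-\sig\bigl((\re p)c(\im p)^\star\bigr)\bigr] = i(z-\bar z) = -2\,\sigi\bigl((\im p)c(\re p)^\star\bigr).
\end{equation*}

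Finally, using $\sigi\bigl((\re p)c(\im p)^\star\bigr)=-\sigi\bigl((\im p)c(\re p)^\star\bigr)$ (by the $\star$-antisymmetry of $\sigi$), the factor $-2\,\sigi\bigl((\im p)c(\re p)^\star\bigr)$ can be rewritten as $\sigi\bigl((\re p)c(\im p)^\star\bigr)-\sigi\bigl((\im p)c(\re p)^\star\bigr)$, which together with the two $\sigr$-terms from the previous paragraph yields exactly the claimed formula. No step here is an obstacle — the computation is entirely formal, with the only bookkeeping care needed being the sign introduced by conjugating $i$ in $p^\star$ and the coefficient $-2$ versus the symmetric splitting into two $\sigi$-terms.
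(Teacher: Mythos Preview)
Your proof is correct and follows exactly the approach the paper has in mind: the paper states only that ``a direct calculation gives the following'' and omits the details, and your write-up supplies precisely that calculation, with the correct handling of the sign from $p^\star=(\re p)^\star-i(\im p)^\star$ and the identification of the cross terms via the $\star$-antisymmetry of $\sigi$.
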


Let $C\subseteq\fatS^\C$ be a set of state polynomial constraints, and define
$$\mathcal{D}_C^\infty := \{ (\lambda,\underline{X}) \in \cS(\cH)\times\cB(\cH)^n : 
X_j=X_j^*,\ c(\lambda;\underline{X}) \succeq 0 \ \text{for all}\ c\in C  \}$$
where $\cH$ is a separable complex Hilbert space (which is unique up to an isomorphism). 

\begin{example}
Let $C=\{\pm(1+\sig(x_1x_2x_3)^2)\}$. In the real framework (where real Hilbert spaces and real states are considered), $\cD_C^\infty=\emptyset$.
On the other hand, $\cD_C^\infty\neq\emptyset$ in the complex framework. Concretely, the state $\lambda=\frac12\tr$ on $2\times2$ complex matrices, and hermitian matrices
$$
X_1=\begin{pmatrix}0&1\\1&0\end{pmatrix},\quad
X_2=\begin{pmatrix}2&0\\0&0\end{pmatrix},\quad
X_3=\begin{pmatrix}0&i\\-i&0\end{pmatrix}
$$
satisfy $1+\lambda(X_1X_2X_3)^2=0$.
\end{example}

Suppose $C$ is balanced, in the sense that $C^\star=C$ and $-(C\setminus\Sym\fatS^\C)\subseteq C$. If
$$C'=C\cap\Sym\fatS^\C\cup\left\{
\pm\tfrac12(c+c^\star), 
\pm\tfrac12(ic^\star-ic)
\colon c\in C\setminus\Sym\fatS^\C
\right\}
\subseteq\Sym\fatS^\C,$$
then $\mathcal{D}_C^\infty=\mathcal{D}_{C'}^\infty$ since $C$ is balanced. When interested in state polynomial positivity within the complex framework, this observation allows one to replace balanced subsets of $\fatS^\C$ with the subsets of $\Sym\fatS^\C$. For the rest of this section we therefore restrict to constraint sets $C\subset\Sym\fatS^\C$. 

Analogously as before, we say that $C\subseteq\Sym\fatS^\C$ is algebraically bounded if there exists $N>0$ such that $N-x_1^2-\cdots-x_n^2=\sum_ip_ic_ip_i^\star$ for some $c_i\in\{1\} \cup C\cap \CX$ and $p_i\in \CX$.
Furthermore, set
$$C^\sig=
\{\sig(pcp^\star)\colon p\in\CX,c\in\{1\}\cup C\}
\subseteq\skinnyS^\R.
$$
One can alternatively introduce $C^\sig$ using only real polynomials according to Lemma \ref{l:c2r}.

\begin{lemma}\label{l:archcx}
If $C\subseteq\Sym\fatS^\C$ is algebraically bounded then the quadratic module $\QM(C^\sig)\subseteq \skinnyS^\R$ is archimedean.
\end{lemma}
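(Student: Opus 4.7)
The plan is to mirror the proof of Lemma \ref{l:arch}, applying the standard fact \cite{cimp} that a quadratic module in a commutative ring is archimedean as soon as a generating set of the ring consists of bounded elements. Since $\skinnyS^\R=\Sym\skinnyS^\C$ is generated as a real polynomial ring by $\{\sigr(u),\sigi(v)\colon u,v\in\mx\setminus\{1\},\ v\neq v^\star\}$, it suffices to exhibit, for every $w\in\mx\setminus\{1\}$, constants $M>0$ such that $M\pm\sigr(w)\in\QM(C^\sig)$ and $M\pm\sigi(w)\in\QM(C^\sig)$.

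First I would reproduce the induction on $|w|$ from the real case to obtain $m_w-\sig(ww^\star)\in\QM(C^\sig)$ for each word $w$. The key identity $\sig(w(N-x_1^2-\cdots-x_n^2)w^\star)\in C^\sig$ still works verbatim, because algebraic boundedness provides a decomposition with $p_i\in\CX$ and $c_i\in\{1\}\cup C$, and $wp_i$ remains in $\CX$; thus $\sig(w(\cdots)w^\star)=\sum_i\sig((wp_i)c_i(wp_i)^\star)\in C^\sig$. Note that $\sig(ww^\star)$ automatically lies in $\skinnyS^\R$, being a $\sig$-image of a self-adjoint element.

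The new ingredient is separating real and imaginary parts via complex squares. Applying $\sig(pp^\star)\in C^\sig$ to $p=\tfrac12\pm w\in\CX$ and using $\sig(w)+\sig(w^\star)=2\sigr(w)$ yields
$$\sig\bigl((\tfrac12\pm w)(\tfrac12\pm w)^\star\bigr)=\tfrac14\pm\sigr(w)+\sig(ww^\star)\in\QM(C^\sig),$$
while $p=\tfrac12\pm iw\in\CX$ combined with $\sig(w)-\sig(w^\star)=2i\sigi(w)$ gives
$$\sig\bigl((\tfrac12\pm iw)(\tfrac12\pm iw)^\star\bigr)=\tfrac14\mp\sigi(w)+\sig(ww^\star)\in\QM(C^\sig).$$
Both right-hand sides lie in $\skinnyS^\R$ automatically, being $\sig$-images of self-adjoint elements. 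Adding $m_w-\sig(ww^\star)\in\QM(C^\sig)$ then produces $\tfrac14+m_w\pm\sigr(w)$ and $\tfrac14+m_w\pm\sigi(w)$ in $\QM(C^\sig)$, completing the proof. I do not anticipate a substantive obstacle; the only care needed is the bookkeeping to verify that each $\sig(pp^\star)$ with $p\in\CX$ produces a real element of $C^\sig$, which is immediate from self-adjointness of $pp^\star$.
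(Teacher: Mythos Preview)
Your proposal is correct and follows essentially the same approach as the paper: reduce to bounding the generators $\sigr(w),\sigi(w)$, reuse the induction from Lemma~\ref{l:arch} to get $m-\sig(ww^\star)\in\QM(C^\sig)$, and then handle the real and imaginary parts via complex hermitian squares. The only cosmetic difference is that the paper takes $p=\tfrac{i}{2}\pm w$ for the imaginary part where you take $p=\tfrac12\pm iw$; since $|i|=1$ these produce the same $pp^\star$ and hence the same identity.
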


\begin{proof}
It suffices to extend the proof of Lemma \ref{l:arch}, and show that the generators $\sigr(w),\sigi(w)$ of $\skinnyS^\R$ are bounded with respect to $\QM(C^\sig)$. As in the proof of Lemma \ref{l:arch} we see that for $w\in\mx$ there exists $m>0$ such that $m-\sig(ww^\star)\in\QM(C^\sig)$.
Then
\begin{equation*}
\begin{split}
\tfrac14+m\pm\sigr(w) &= 
\sig\left((\tfrac12\pm w)(\tfrac12\pm w)^\star\right)
+m-\sig(ww^\star) \in \QM(C^\sig),\\
\tfrac14+m\mp\sigi(w) &= 
\sig\left((\tfrac{i}{2}\pm w)(\tfrac{i}{2}\pm w)^\star\right)
+m-\sig(ww^\star) \in \QM(C^\sig).\qedhere
\end{split}
\end{equation*}
\end{proof}

The following is the complex analog of Theorem \ref{thm:nocyc}.

\begin{theorem}\label{thm:nocyccx}
Let $C\subseteq \Sym\fatS^\C$ be algebraically bounded.
Then for $a\in\skinnyS^\R$ the following are equivalent:
\begin{enumerate}[\rm (i)]
\item$a(\lambda;\uX)\geq 0$ for all 
$(\lambda;\uX) \in \cD_C^\infty$;\vspace{0.25ex}
\item$a+\varepsilon \in \QM(C^\sig)$ for all $\varepsilon>0$.
\end{enumerate}
\end{theorem}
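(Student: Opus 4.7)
The forward direction (ii)$\Rightarrow$(i) is straightforward. For any $(\lambda;\uX)\in\cD_C^\infty$ and any generator $\sig(pcp^\star)$ of $\QM(C^\sig)$ with $c\in\{1\}\cup C$ and $p\in\CX$, the evaluation
$$\sig(pcp^\star)(\lambda;\uX)=\lambda\big(p(\uX)^*c(\lambda;\uX)p(\uX)\big)$$
is nonnegative, since $c=c^\star$ and $c(\lambda;\uX)\succeq0$ by hypothesis, so the argument is a positive semidefinite operator to which $\lambda$ applies. Sums, products, and squares (in $\skinnyS^\R$) preserve nonnegativity on $\cD_C^\infty$; thus every element of $\QM(C^\sig)$ is nonnegative there. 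Applying this to $a+\varepsilon$ for every $\varepsilon>0$ gives $a\geq0$ on $\cD_C^\infty$.

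For the harder direction (i)$\Rightarrow$(ii), I argue by contrapositive: assume $a+\varepsilon\notin\QM(C^\sig)$ for some $\varepsilon>0$. Since $\QM(C^\sig)\subseteq\skinnyS^\R$ is archimedean (Lemma \ref{l:archcx}), Proposition \ref{prop:KD} furnishes a unital homomorphism $\varphi:\skinnyS^\R\to\R$ with $\varphi(\QM(C^\sig))\subseteq\R_{\ge0}$ and $\varphi(a)<0$. Because $\skinnyS^\C=\skinnyS^\R\otimes_\R\C$ as $\R$-algebras, $\varphi$ extends uniquely to a $\C$-algebra homomorphism $\varphi_\C:\skinnyS^\C\to\C$ via $\varphi_\C(u+iv)=\varphi(u)+i\varphi(v)$. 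Define the unital functional
$$L:\CX\to\C,\qquad L(p):=\varphi_\C(\sig(p)).$$
Using $\sig(p^\star)=\overline{\sig(p)}$ one checks that $L$ is a $\star$-functional. Positivity $L(pp^\star)\ge0$ holds because $pp^\star$ is symmetric, hence $\sig(pp^\star)\in\skinnyS^\R$ lies in $C^\sig$ (with $c=1$), and $\varphi(C^\sig)\subseteq\R_{\ge0}$. Algebraic boundedness of $C$ combined with Lemma \ref{l:c2r} yields in the same way the bound $L(p(N-x_1^2-\cdots-x_n^2)p^\star)\ge0$ for all $p\in\CX$.

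The next step is to invoke the standard complex-Hilbert-space version of the GNS construction (the evident complex analog of Proposition \ref{prop:moment}, obtained by identical argument in $\CX$): there exist a complex separable Hilbert space $\cH$, a unit vector $v\in\cH$ giving the vector state $\lambda(Y)=\langle Yv,v\rangle$, and self-adjoint operators $\uX=(X_1,\dots,X_n)\in\cB(\cH)^n$ with $L(p)=\lambda(p(\uX))$ for all $p\in\CX$. Comparing generators, $\sigr(w)(\lambda;\uX)=\re L(w)=\varphi(\sigr(w))$ and $\sigi(w)(\lambda;\uX)=\im L(w)=\varphi(\sigi(w))$, so $b(\lambda;\uX)=\varphi(b)$ for every $b\in\skinnyS^\R$; in particular $a(\lambda;\uX)=\varphi(a)<0$. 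Finally, let $P$ be the orthogonal projection onto $\overline{\{p(\uX)v:p\in\CX\}}$ and replace $\uX$ by the compressions $P\uX P$; this does not change $L$ (hence not the value $\varphi(b)$ for any $b$) since $v\in\mathrm{ran}\,P$ and the cyclic subspace is invariant. For $c\in C$ and $p\in\CX$,
$$\langle c(\lambda;\uX)p(\uX)v,\,p(\uX)v\rangle=\sig(p^\star cp)(\lambda;\uX)=\varphi(\sig(p^\star cp))\ge0,$$
and since $PX_j=X_j$, this forces $c(\lambda;\uX)\succeq0$ on all of $\cH$. Thus $(\lambda;\uX)\in\cD_C^\infty$ witnesses the failure of (i), completing the contrapositive.

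The main obstacle is the faithful transport from the real homomorphism $\varphi$ back to a bona fide state and operator tuple on a complex Hilbert space: one must verify that the complex lift $\varphi_\C$ interacts correctly with the real and imaginary state symbols so that the GNS-produced $(\lambda;\uX)$ actually represents $\varphi$ on all of $\skinnyS^\C$, not merely on $\skinnyS^\R$. Once this bookkeeping between $\sig$, $\sigr$, $\sigi$ and the involution is established (Lemma \ref{l:c2r} being the key ingredient), the remainder of the argument parallels the real case of Theorem \ref{thm:nocyc}.
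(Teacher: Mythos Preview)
Your proof is correct and follows essentially the same route as the paper: separate via the archimedean quadratic module (Lemma \ref{l:archcx} and Proposition \ref{prop:KD}) to obtain $\varphi$, lift to the complex functional $L(p)=\varphi(\sigr(p))+i\varphi(\sigi(p))$ on $\CX$, apply the complex GNS construction, and then verify $(\lambda;\uX)\in\cD_C^\infty$ via the cyclic-subspace compression exactly as in Theorem \ref{thm:nocyc}. You spell out in more detail the projection step and the bookkeeping between $\sig,\sigr,\sigi$ that the paper leaves to the phrase ``the rest follows as in the proof of Theorem \ref{thm:nocyc}''.
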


\begin{proof}
The implication (ii)$\Rightarrow$(i) is straightforward as in the proof of Theorem \ref{thm:nocyc}.
For the implication (i)$\Rightarrow$(ii), suppose $a+\varepsilon \notin \QM(C^\sig)$ for some $\varepsilon>0$. By Lemma \ref{l:archcx} and Proposition \ref{prop:KD}, there exists a unital homomorphism $\varphi: \skinnyS^\R\to\R$ with $\varphi(\QM(C^\sig))\subseteq \R_{\ge0}$ and $\varphi(a)<0$. Hence
$$\varphi(\sig (pp^\star))\ge0,\qquad \varphi(\sig (p(N-x_1^2-\cdots-x_n^2)p^\star))\ge0$$
for all $p\in\CX$. 
Consider the unital $\star$-functional $L:\CX\to\C$ given by $L(p)=\varphi(\sigr(p))+i\varphi(\sigi(p))$.
By a complex version of Proposition \ref{prop:moment} (which is obtained from the GNS construction on $\CX$), there exist a vector state $\lambda\in\cS(\cH)$ and $\uX=\uX^*\in\cB(\cH)^n$ such that 
$L(p)=\lambda(p(\uX))$ for all $p\in\CX$.
Therefore $\varphi(b)=b(\lambda;\uX)$ for all $b\in \skinnyS^\R$. The rest follows as in the proof of Theorem \ref{thm:nocyc}.
\end{proof}

Let $C\subseteq\Sym\fatS^\C$ be algebraically bounded, and $a\in\skinnyS^\R$. For $d\in\N$ let
$$\cM(C)_d:=\left\{
\sum_{i=1}^{K} \sig(f_ic_if_i^\star)\colon K\in\N,\ f_i\in\fatS^\C,\ c_i \in \{1\}\cup C,\ \deg(f_ic_if_i^\star)\le 2d
\right\}.$$
Then the SDP hierarchy
\begin{equation}\label{e:cx_dual}
\sup \{ m \colon a - m \in \cM(C)_d \}
\end{equation}
converges to $\inf_{\cD^\infty_C}a$ from above. Analogously to Lemma \ref{lemma:pure_dual}, the dual of \eqref{e:cx_dual} is constructed using Hankel matrices, and there is no duality gap if the assumption of Proposition \ref{p:nogap} is satisfied.

More concretely, let $L:\skinnyS^\R\to\R$ be a unital functional, and $c\in\Sym\fatS^\C$. The hermitian localizing Hankel matrix $\M_d^\C(c\, L)$ is indexed by $\fatS^\C$-words of degree at most $d$ (here, a $\fatS^\C$ word is a product of $\sigr(u),\sigi(v),x_j$), and its $(u,v)$-entry equals $L(\sigr(u^\star cv))+i L(\sigi(u^\star cv))$. Then
\begin{equation}
\label{e:cx_primal}
\begin{aligned}
\inf_{\substack{L : \skinnyS_{2 d}^\R \to \R \\ L \emph{ linear, }L(1)=1}} \quad  & L(a)	
\qquad
\rm{s.t.} 
\quad \M_{d - d_c}^\C(c \, L) \succeq 0  \,,  \quad \text{for all }  c \in \{1\}\cup C \text{ with }d_c\le d
\end{aligned}
\end{equation}
is the dual of \eqref{e:cx_dual}. 
Alternatively, to reformulate \eqref{e:cx_primal} as an SDP over real numbers, we refer the reader to \cite{wang2023efficient}.


\section{Nonlinear Bell inequalities}
\label{sec:bell}

In this section we connect \state polynomial optimization to violations of nonlinear Bell inequalities, establish a further reduction of our optimization procedures based on conditional expectation that is tailored to the quantum-mechanical formalism (Proposition \ref{p:condexp}), and outline a few examples. 

For the sake of simplicity, we restrict to bipartite models where two parties share a state and use binary observables to produce measurements.
A ($m$-input 2-output) \emph{quantum commuting model} is then given as a triple $(\lambda,\uA,\uB)$ where $\lambda\in\cS(\cH)$ is a state and 
$\uA=(A_1,\dots,A_m),\uB=(B_1,\dots,B_m)$ are commuting tuples of binary observables in $\cB(\cH)$:
$$A_i^*=A_i,\quad A_i^2=I,\quad 
B_j^*=B_j,\quad B_j^2=I,\quad A_iB_j=B_jA_i$$
for all $1\le i,j\le m$. The correlations produced by $(\lambda,\uA,\uB)$ are determined by $\lambda(A_iB_j)$ for $i,j=0,\dots m$ where $A_0=B_0=I$.
If $\cH=\cH'\otimes \cH'$ for a finite-dimensional $\cH'$ and $A_i=A_i'\otimes I$, $B_j=I\otimes B_j'$ then $(\lambda,\uA,\uB)$ is a (finite-dimensional)
\emph{spatial quantum model}. 
In this case, $\dim\cH'$ is the local dimension of $(\lambda,\uA,\uB)$, and $\lambda$ is usually given by a density matrix.
On the other hand, if $\uA$ and $\uB$ are tuples of commuting operators, then $(\lambda,\uA,\uB)$ is \emph{classical}.
In this case, the correlations can be obtained as expectations of products of binary random variables on a probability space.

To warm up, consider the expression
\begin{equation}\label{e:bell1}
\lambda(A_1B_1)+\lambda(A_1B_2)+\lambda(A_2B_1)-\lambda(A_2B_2)
\end{equation}
for a model $(\lambda,\uA,\uB)$.
The classical Bell inequality states that \eqref{e:bell1}
is at most $2$
for classical models.
On the other hand, \eqref{e:bell1} attains the value $2\sqrt{2}$ for a spatial quantum model with local dimension $2$.
Furthermore, Tsirelson's bound \cite{tsirelson} implies that the value $2\sqrt{2}$ is optimal for all quantum commuting models.
From the perspective of this paper, Tsirelson's bound can be recovered as a \state polynomial optimization problem
\begin{equation*}
\begin{split}
\sup\ & \sig(x_1y_1)+\sig(x_1y_2)+\sig(x_2y_1)-\sig(x_2y_2) \\
&\ \text{ s.t. }\ x_i^2=1,\,y_j^2=1,\,[x_i,y_j]=0.
\end{split}
\end{equation*}
 
Upper bounds on quantum violations of linear Bell inequalities can be found using the NPA hierarchy \cite{navascues2008convergent} for eigenvalue optimization of noncommutative polynomials; for example, one can get Tsirelson's bound on violations of \eqref{e:bell1} by eigenvalue-optimizing $x_1y_1+x_1y_2+x_2y_1-x_2y_2$ subject to $x_j^2=y_j^2=1$ and $[x_i,y_j]=0$.
On the other hand, covariance of quantum correlations \cite{PHBB} and detection of partial separability \cite{Uffink} lead to more general {\em polynomial} Bell inequalities. While linear Bell inequalities are linear in expectation values of (products of) observables, polynomial Bell inequalities contain multivariate polynomials in expectation values of (products of) observables. 
Even for classical models, nonlinearity complicates the study of polynomial Bell inequalities; for example, the supremum of a Bell-like expression over classical models can be strictly larger than the supremum over deterministic models.
Nonlinearity also renders noncommutative polynomial eigenvalue optimization, which is commonly used to bound quantum violations of linear Bell inequalities, inapplicable to polynomial Bell inequalities.
On the other hand, \state polynomial optimization gives upper bounds on violations of polynomial Bell inequalities. 

\subsection{Universal algebras of binary observables}
\label{sec:uni}

In this section we derive further simplifications for optimization of a \state polynomial subject to a balanced constraint set of the form
\begin{equation}\label{e:specialC}
C=\left\{\pm(1-x_1^2),\cdots,\pm(1-x_n^2),\pm[x_{i_1},x_{j_1}],\dots, \pm[x_{i_\ell},x_{j_\ell}]
\right\}
\end{equation}
for some $1\le i_k,j_k\le n$. 
As mentioned above, optimal Bell inequalities correspond to optimization problems subject to constraint sets of the form \eqref{e:specialC}.
By Corollary \ref{cor:pure_cvg}, every state polynomial optimization problem on \eqref{e:specialC} admits a convergent SDP hierarchy as in Section \ref{sec:constrained}, and these SDPs satisfy strong duality by Proposition \ref{p:nogap}.

Let $G$ be a group. Analogously to the construction of \ncstate polynomials, one can define the \emph{state group algebra} $\fatS(G)$ of $G$: namely, let $\skinnyS(G)$ be the real polynomial ring in commutative symbols $\sig(g)$ for $g\in G\setminus\{1\}$, subject to $\sig(g^{-1})=\sig(g)$, and let $\fatS(G)=\skinnyS(G)\otimes \R[G]$, where $\R[G]$ is the real group $\star$-algebra of $G$, where the involution is given by $g^\star = g^{-1}$ for $g\in G$. As before, there is a natural map $\fatS(G)\to\skinnyS(G)$. Let $\Sigma \fatS(G)^2$ denote the set of sums of hermitian squares $ff^\star$ for $f\in\fatS(G)$.

Returning to \eqref{e:specialC}, consider the group
$$G=\langle x_1,\dots,x_n \mid 
x_1^2=\cdots=x_n^2=1,x_{i_1}x_{j_1}=x_{j_1}x_{i_1},\dots, x_{i_\ell}x_{j_\ell}=x_{j_\ell}x_{i_\ell}
\rangle.$$
Let $\pi:\mx\to G$ be the canonical homomorphism. We extend it to a $\sig$-respecting $\star$-homomorphism $\pi:\fatS\to\fatS(G)$. Then for $a\in\skinnyS$,
\begin{equation}\label{e:reduced}
a\in \QM(C^\sig) \qquad\iff\qquad 
\pi(a)\in \sig\left(\Sigma \fatS(G)^2\right).
\end{equation}
The relation \eqref{e:reduced} is advantageous in optimizing \state polynomials subject to $C$: the sizes of SDPs \eqref{eq:pure_constr_dual} and \eqref{eq:pure_constr_primal} can be reduced by indexing with a basis of $\fatS(G)$, and only a single semidefinite constraint is needed (corresponding to $\Sigma \fatS(G)^2$).
This reduction is used in all 
subsequent computational examples.

A further reduction is sometimes possible. Given $f\in\skinnyS(G)$, its \emph{support} are elements of $G$ appearing in $f$.

\begin{proposition}\label{p:condexp}
Let $a\in\skinnyS$, and let $H\subseteq G$ be the subgroup generated by the support of $\pi(a)$. Then
\begin{equation}\label{e:reduced2}
a\in \QM(C^\sig) \qquad\iff\qquad 
\pi(a)\in \sig\left(\Sigma \fatS(H)^2\right).
\end{equation}
\end{proposition}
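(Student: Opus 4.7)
The direction ($\Leftarrow$) is immediate from \eqref{e:reduced} together with the inclusion $\fatS(H)\subseteq\fatS(G)$ induced by the subgroup embedding. The bulk of the proof concerns the forward direction: by \eqref{e:reduced} it suffices to show that if $\pi(a)\in\sig(\Sigma\fatS(G)^2)$, then already $\pi(a)\in\sig(\Sigma\fatS(H)^2)$. The plan is to construct a $\sig$-compatible \emph{conditional expectation} $E:\fatS(G)\to\fatS(H)$ that fixes $\fatS(H)$ pointwise and carries $\Sigma\fatS(G)^2$ into $\Sigma\fatS(H)^2$, and then apply it to the assumed sum-of-hermitian-squares representation of $\pi(a)$.

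I would build $E$ factor by factor on $\fatS(G)=\skinnyS(G)\otimes\R[G]$. On the group algebra, let $E_\R:\R[G]\to\R[H]$ be the standard $\R$-linear projection sending $g\mapsto g$ for $g\in H$ and $g\mapsto 0$ otherwise. On the commutative state ring, define $E_\skinnyS:\skinnyS(G)\to\skinnyS(H)$ as the unique ring homomorphism with $\sig(g)\mapsto \sig(g)$ for $g\in H\setminus\{1\}$ and $\sig(g)\mapsto 0$ for $g\in G\setminus H$; this is well defined since $H$ is closed under inversion, so the relation $\sig(g)=\sig(g^{-1})$ is respected. Set $E:=E_\skinnyS\otimes E_\R$. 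A quick check on elementary tensors $c\otimes g$ shows $\sig\circ E=E_\skinnyS\circ\sig$ as maps $\fatS(G)\to\skinnyS(H)$, and $E$ is the identity on $\fatS(H)$.

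The technical heart of the argument is verifying $E(ff^\star)\in\Sigma\fatS(H)^2$ for every $f\in\fatS(G)$. Writing $f=\sum_{g\in G}c_g\,g$ with $c_g\in\skinnyS(G)$, one has $ff^\star=\sum_{g,g'}c_g c_{g'}\,gg'^{-1}$, and $gg'^{-1}\in H$ precisely when $g,g'$ lie in the same right coset of $H$. Fixing coset representatives $G=\bigsqcup_s Hs$ and collecting terms yields
\[
E(ff^\star)=\sum_s\sum_{h,h'\in H}E_\skinnyS(c_{hs})E_\skinnyS(c_{h's})\,h(h')^{-1}=\sum_s F_sF_s^\star,\qquad F_s:=\sum_{h\in H}E_\skinnyS(c_{hs})\,h\in\fatS(H).
\]
This is the main obstacle and uses crucially that $E_\skinnyS$ is a \emph{ring} homomorphism, so that $E_\skinnyS(c_g c_{g'})=E_\skinnyS(c_g)E_\skinnyS(c_{g'})$, allowing the cross sum to factor as $F_sF_s^\star$.

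With these properties in hand the conclusion is immediate. Suppose $\pi(a)=\sig\bigl(\sum_i f_if_i^\star\bigr)$ with $f_i\in\fatS(G)$. The hypothesis that $H$ is generated by the support of $\pi(a)$ places $\pi(a)$ in $\skinnyS(H)$, so $E_\skinnyS(\pi(a))=\pi(a)$. Applying $E_\skinnyS$ to the representation and using $\sig\circ E=E_\skinnyS\circ\sig$ yields $\pi(a)=\sig\bigl(\sum_i E(f_if_i^\star)\bigr)\in\sig(\Sigma\fatS(H)^2)$, completing the proof.
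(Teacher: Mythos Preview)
Your proof is correct and follows essentially the same approach as the paper: both construct the conditional expectation $E=E_{\skinnyS}\otimes E_{\R}$ (the paper writes it via the indicator function ${\bf 1}_H$) and use that $E$ preserves sums of hermitian squares together with $\sig$-compatibility. The only minor difference is that you spell out the coset computation showing $E(ff^\star)=\sum_s F_sF_s^\star$, whereas the paper defers this step to \cite[Proposition~3.4]{SS13}.
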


\begin{proof}
Let ${\bf 1}_H:G\to H\cup\{0\}$ be the indicator function, where ${\bf 1}_H(g)=g$ if $g\in H$ and ${\bf 1}_H(g)=0$ otherwise.
Let $E:\fatS(G)\to\fatS(H)$ be the unital $\star$-linear map given by
$$E\big(\sig(g_1)\cdots\sig(g_\ell)g_0 \big) =
\sig({\bf 1}_H(g_1))\cdots\sig({\bf 1}_H(g_\ell)){\bf 1}_H(g_0).
$$
Note that $E$ commutes with $\sig$, restricts to a ring homomorphism $\skinnyS(G)\to\skinnyS(H)$, and has conditional expectation properties (cf.~\cite[Section 3]{SS13}):
\begin{enumerate}[(i)]
    \item $E(b_1ab_2)=b_1E(a)b_2$ for $a\in\fatS(G)$ and $b_1,b_2\in\fatS(H)$,
    \item $E(\Sigma\fatS(G)^2)=\Sigma\fatS(H)^2$.
\end{enumerate}
The second property follows by \cite[Proposition 3.4]{SS13} and $E:\skinnyS(G)\to\skinnyS(H)$ being a homomorphism. Since $\pi(a)\in\skinnyS(H)$, (ii) implies $\pi(a)\in \sig\left(\Sigma \fatS(H)^2\right)$ if and only if $\pi(a)\in \sig\left(\Sigma \fatS(G)^2\right)$, and the rest follows by \eqref{e:reduced}.
\end{proof}
If $H$ is a proper subgroup of $G$, the sizes of SDPs \eqref{eq:pure_constr_dual} and \eqref{eq:pure_constr_primal} can thus be further decreased by Proposition \ref{p:condexp}; this is illustrated in Example \ref{exa1} below.

\subsection{Examples}\label{sec:exa}

We demonstrate the optimization results from Section \ref{sec:hierarchy} on the following polynomial Bell inequalities. The codes for reproducing these results
over real Hilbert spaces
are available at
\begin{center}
\url{https://github.com/wangjie212/NCTSSOS/blob/master/examples/stateopt.jl}
\end{center}
and the codes for reproducing these results over complex Hilbert spaces are available at
\begin{center}
\url{https://github.com/wangjie212/NCTSSOS/blob/master/examples/complex_state.jl}
\end{center}
For all examples (except Example 8.1.3 with $d=5$), we employ {\tt MOSEK 10.0} as an SDP solver.
For more details on the modeling syntax, we refer the interested programmer to the tutorial from \cite[Appendix~B.2]{sparsebook} that describes a similar syntax to perform trace polynomial optimization. 

\subsubsection{Example}\label{exa1}

One of the first considered polynomial Bell inequalities is
\begin{equation}\label{e:bell2}
\lambda(A_1B_2+A_2B_1)^2+
\lambda(A_1B_1-A_2B_2)^2
\le4
\end{equation}
given in \cite{Uffink}, where it is shown that \eqref{e:bell2} holds for all classical models, and for all spatial quantum models with local dimension $2$ (the equality is obtained for a model with a maximally entangled state). In \cite{NKI}, \eqref{e:bell2} is shown to hold for all spatial quantum models. 
An automatized proof of \eqref{e:bell2} for arbitrary quantum commuting models can be obtained by solving the optimization problem
\begin{equation}\label{e:bell3}
\begin{split}
\sup\ & 
\left(\sig(x_1y_2)+\sig(x_2y_1)\right)^2+
\left(\sig(x_1y_1)-\sig(x_2y_2)\right)^2 \\
&\ \text{ s.t. }\ x_i^2=1,y_j^2=1,\, [x_i,y_j]=0 \text{ for }i,j=1,2.
\end{split}
\end{equation}
Let $C=\{\pm(1-x_j^2),\pm(1-y_j^2),\pm[x_i,y_j]\}_{i,j=1,2}$.
The relaxation of \eqref{e:bell3} with $d=3$ as in Section \ref{sec:constrained},
\begin{equation}\label{e:bell31}
\inf\ \mu \ \text{ s.t. }\ 
\mu-\left(\sig(x_1y_2)+\sig(x_2y_1)\right)^2-
\left(\sig(x_1y_1)-\sig(x_2y_2)\right)^2 \in \mathcal{M}(C)_d
\end{equation}
outputs $4$, which coincides with the classical value in \eqref{e:bell2}. The concrete implementation of \eqref{e:bell31} encodes the relations $x_j^2=y_j^2=1$ and $[x_i,y_j]=0$ directly into the SDP, as in Section \ref{sec:uni}. The resulting SDP has 2032 variables, and a $209\times 209$ semidefinite constraint.

Alternatively, we can also invoke Proposition \ref{p:condexp}. The support of 
$\left(\sig(x_1y_2)+\sig(x_2y_1)\right)^2-
\left(\sig(x_1y_1)-\sig(x_2y_2)\right)^2$
in
$$G=\langle x_i,y_j \mid 
x_i^2=y_j^2=1,x_iy_j=y_jx_i 
\text{ for } i,j=1,2
\rangle$$
is $\{x_iy_j\}_{i,j=1,2}$, which generates the subgroup $H$ of $G$ consisting of all words in generators $x_i,y_j$ of even length.
Cutting down the aforementioned SDP with respect to $H$ then results in an SDP with 933 variables and a $112\times 112$ semidefinite constraint, which returns the value 4 in shorter time.

\subsubsection{Example}\label{exa2}

\def\cov{\operatorname{cov}}

Polynomial Bell inequalities also arise from covariances of quantum correlations. Let
$$\cov_{\lambda}(X,Y) = 
\lambda(XY)-\lambda(X)\lambda(Y).$$
In \cite{PHBB} it is shown that while
\begin{equation}\label{e:bell4}
\begin{split}
&\cov_\lambda(A_1,B_1)+\cov_\lambda(A_1,B_2)
+\cov_\lambda(A_1,B_3) \\
+&\cov_\lambda(A_2,B_1)+\cov_\lambda(A_2,B_2)
-\cov_\lambda(A_2,B_3)\\
+&\cov_\lambda(A_3,B_1)-\cov_\lambda(A_3,B_2)
\end{split}
\end{equation}
is at most $\frac92$ for classical models, it attains the value 5 for a spatial quantum model of local dimension 2 and a maximally entangled state. The authors also performed extensive numerical search within spatial quantum models with local dimension at most $5$, but no higher value of \eqref{e:bell4} was found. They left it as an open question whether higher dimensional entangled states could lead to larger violations \cite[Appendix D.1(b)]{PHBB}.

Let
\begin{align*}
b=\,&\sig(x_1y_1)-\sig(x_1)\sig(y_1)+\sig(x_1y_2)-\sig(x_1)\sig(y_2)+\sig(x_1y_3)-\sig(x_1)\sig(y_3) \\
&+\sig(x_2y_1)-\sig(x_2)\sig(y_1)+\sig(x_2y_2)-\sig(x_2)\sig(y_2)-\sig(x_2y_3)+\sig(x_2)\sig(y_3) \\
&+\sig(x_3y_1)-\sig(x_3)\sig(y_1)-\sig(x_3y_2)+\sig(x_3)\sig(y_2)\,.
\end{align*}
The relaxation of
\begin{equation*}
\sup\ b\ \text{ s.t. }\  x_i^2=1,\,y_j^2=1,\,[x_i,y_j]=0 \text{ for }i,j=1,2,3
\end{equation*}
with $d=2$ returns 5. Therefore the value of \eqref{e:bell4} is at most 5 for all quantum commuting models.

\subsubsection{Example}\label{exa3}
In the previous two examples, the maximal violation of a polynomial Bell inequality was attained at a maximally entangled state.
Next, consider the expression
\begin{equation}\label{e:newexa}
\begin{split}
&\lambda(A_2+B_1 + B_2
- A_1B_1 + A_2 B_1
+ A_1 B_2+ A_2 B_2) \\
& -\lambda(A_1)\lambda(B_1)
-\lambda(A_2)\lambda(B_1)
-\lambda(A_2)\lambda(B_2)
-\lambda(A_1)^2- \lambda(B_2)^2\,.
\end{split}
\end{equation}
Below we show that:
\begin{enumerate}[(i)]
    \item \eqref{e:newexa} is bounded by 3.375 for classical models and spatial quantum models with maximally entangled states, and this bound is obtained by a classical model with a discrete 3-atomic measure;
    \item \eqref{e:newexa} is bounded by 3.51148 for any quantum commuting model, and this bound is obtained by a spatial quantum model of local dimension 2.
\end{enumerate}
Let
\[
\begin{split}
b =\,&\sig(x_2)+\sig(y_1)+\sig(y_2)
-\sig(x_1y_1)+\sig(x_2y_1)+\sig(x_1y_2)+\sig(x_2y_2) \\
& -\sig(x_1)\sig(y_1)
-\sig(x_2)\sig(y_1)-\sig(x_2)\sig(y_2)
-\sig(x_1)^2-\sig(y_2)^2 \,.
\end{split}
\]

(ii): To solve the optimization problem
\begin{equation}\label{e:bell6}
\sup\ b\ \text{ s.t. }\  x_i^2=1,\,y_j^2=1,\,[x_i,y_j]=0 
\text{ for }i,j=1,2,
\end{equation}
we first solve the SDP for the relaxation of \eqref{e:bell6} with $d=2$ 
as in \eqref{eq:pure_constr_primal}.
The output is 3.51148; moreover, the resulting Hankel matrix is flat, and the assumptions of Proposition \ref{prop:st_flat} are satisfied. Therefore we can perform the finite-dimensional GNS construction and extract a $4$-dimensional quantum commuting model attaining 3.51148. After a unitary basis change, the extracted model is evidently spatial quantum, of local dimension 2, and given by
\begin{equation*}
\begin{split}
\ket{\psi}=
\begin{pmatrix}
-\cos\beta\sin\tfrac{\beta}{3} \\
\cos\beta\cos\tfrac{\beta}{3} \\
-\sin\beta\cos\tfrac{2\beta-\alpha}{3} \\
\sin\beta\sin\tfrac{2\beta-\alpha}{3}
\end{pmatrix},
\quad
&
A_1=\begin{pmatrix} 1& 0 \\ 0 & -1
\end{pmatrix}\otimes I,\quad
A_2=
\begin{pmatrix}
\cos\alpha&\sin\alpha \\
 \sin\alpha & -\cos\alpha
\end{pmatrix}
\otimes I,
\\
&
B_1=I\otimes \begin{pmatrix} 1& 0 \\ 0 & -1
\end{pmatrix},\quad
B_2=
-I\otimes \begin{pmatrix}
\cos\alpha&\sin\alpha \\
 \sin\alpha & -\cos\alpha
\end{pmatrix}
\end{split}
\end{equation*}
and $\lambda(Y) = \bra{\psi}Y\ket{\psi}$ for $\alpha=-4.525$ and $\beta=2.192$.

(i): If in the optimization problem \eqref{e:bell6} one restricts only to \emph{tracial} states (i.e., $\lambda\in\cS(\cH)$ satisfying $\lambda(uv)=\lambda(vu)$), then its solution gives an upper bound of \eqref{e:newexa} for both classical models and spatial quantum models with a maximally entangled state.
Using the relaxation with $d=2$ in the SDP hierarchy \cite[Section 5.3]{KMV} for the tracial version of \eqref{e:bell6} one obtains an upper bound $3.375$. Again, the resulting Hankel matrix is flat, so a maximizing $3$-dimensional model with a tracial state can be extracted \cite[Section 5.4]{KMV}. 
In this model, all the operators commute, so the model is classical, on a probability space of size 3. Once the bound on the size of the probability space is known, we can search for a maximizing classical model exactly, resulting in
\begin{equation*}
\rho=\diag\left(\frac14,\frac38,\frac38\right),\quad
A_1=\diag(1,-1,-1),\quad A_2=\diag(1,1,-1),\quad
B_1=I,\quad B_2=A_2
\end{equation*}
and $\lambda(Y) = \tr(\rho Y)$, for which the value of \eqref{e:newexa} is $\frac{27}{8}=3.375$.
Lastly, since $\rho$ has rational entries with denominator 8, the upper bound $\frac{27}{8}$ can also be reached by a spatial quantum model with a maximally entangled state with (possibly non-minimal)
local dimension $8\cdot 3=24$.

To complete the picture, let us mention that the maximum of \eqref{e:newexa} for deterministic models is 2.

\section{Bell inequalities for network scenarios}
\label{sec:networks}

As seen in the previous section, 
a polynomial Bell inequality corresponds to optimizing a \state polynomial subject to noncommutative constraints.
On the other hand, correlation inequalities for general quantum networks \cite{Fri,pozas2019bounding,ligthart21,tavakoli22}
correspond to optimizing a \state polynomial subject to both noncommutative and \state constraints.

Following \cite{Fri}, a \emph{correlation} or \emph{network scenario} is given by
\begin{enumerate}[(1)]
    \item a set $[M]=\{1,\dots,M\}$ of parties;
    \item a set $[S]=\{1,\dots,S\}$ of sources; and
    \item a relation $\rightsquigarrow$ on $[S]\times[M]$, where $s\rightsquigarrow m$ means that the party $m$ has access to the source $s$.
\end{enumerate}
Parties can have several inputs (questions) and outputs (answers);
for $m\in [M]$ let $a_m$ and $b_m$ be the number of inputs and outputs of $m$, respectively.

A \emph{(spatial) quantum} model for such a network is given by
\begin{enumerate}[(i)]
\item (finite-dimensional) Hilbert spaces $\cH_{(s,m)}$ for $s\rightsquigarrow m$;
\item for each $m\in [M]$, projective-valued measures (PVMs) $(P_{m,i,j})_{j=1}^{b_m}$ for $i=1,\dots,a_m$ where
$$P_{m,i,j} \in\cB\left(\bigotimes_{s: s\rightsquigarrow m}\cH_{s,m}\right),\qquad
P_{m,i,j}=P_{m,i,j}^*=P_{m,i,j}^2,\quad
\sum_{j=1}^{b_m}P_{m,i,j}=I;
$$
\item density matrices
$$\rho_s\in\cB\left(\bigotimes_{m: s\rightsquigarrow m}\cH_{s,m}\right)$$
representing states, for $s\in [S]$.
\end{enumerate}
The correlations of this model are
$$p(i_1,j_1,i_2,j_2,\dots,i_{M},j_{M}) = \tr\left(
\bigotimes_{s\in [S]}\rho_s \cdot
\bigotimes_{m\in [M]}P_{m,i_m,j_m}
\right)$$
with a slight abuse of notation, since the tensor factors need to be appropriately ordered.

As in \cite[Section III.C]{ligthart23} (cf. \cite[Section II.C]{ligthart21} and \cite[Definition 3.2]{renou22}), a \emph{reduced quantum model} for such a network is given by
\begin{enumerate}[(i)]
\item a (possibly infinite-dimensional) Hilbert space $\cH$;
\item for each $m\in [M]$, projective-valued measures (PVMs) $(P_{m,i,j})_{j=1}^{b_m}$ for $i=1,\dots,a_m$ where
$$P_{m,i,j} \in\cB(\cH),\qquad
P_{m,i,j}=P_{m,i,j}^*=P_{m,i,j}^2,\quad
\sum_{j=1}^{b_m}P_{m,i,j}=I,
$$
and
$$[P_{m,i,j},P_{m',i',j'}]=0\qquad \text{for } m\neq m';$$
\item a state $\lambda\in\cS(\cH)$ satisfying
\begin{equation}
\label{e:factor}
\lambda(Q_1\cdots Q_\ell)
=\lambda(Q_1)\cdots\lambda(Q_\ell)
\end{equation}
whenever each $Q_k$ is in the algebra generated by the PVMs for $m_k$, and for all $k\neq k'$, $m_k\neq m_{k'}$ and there is no $s\in [S]$ with 
$m_k\leftsquigarrow s\rightsquigarrow  m_{k'}$.
\end{enumerate}
The correlations of this model are
$$p(i_1,j_1,i_2,j_2,\dots,i_{M},j_{M}) = \lambda\left(
P_{1,i_1,j_1}P_{2,i_2,j_2}\cdots
P_{M,i_{M},j_{M}}
\right)$$

Clearly, correlations of spatial quantum models are produced by reduced quantum models. 
When all operators in a reduced quantum model commute (in which case measurements are given by indicator functions on a probability space, and the state is given by the integration with respect to the probability measure),
the model is \emph{classical}.

A (classical/spatial quantum/reduced quantum) polynomial Bell inequality for a network scenario is an upper bound on a polynomial expression in correlations, valid for every (classical/spatial quantum/reduced quantum) model. We can obtain Bell inequalities for reduced quantum models of network scenarios using the SDP hierarchy from Section \ref{sec:hierarchy} as follows.

Consider the description of a network scenario as at the beginning of this section, and let $\mathcal{B}$ be a polynomial expression in correlations $p(i_1,j_1,\dots,i_{M},j_{M})$.
For each $m,i,j$ let $x_{m,i,j}$ be a freely noncommuting self-adjoint variable, and let $b\in\skinnyS$ be the state polynomial obtained from $\mathcal{B}$ by replacing $p(i_1,j_1,\dots,i_{M},j_{M})$ with
$\sig(x_{1,i_1,j_1}\cdots x_{M,i_{M},j_{M}})$.

\begin{corollary}\label{c:network}
Let $\mathcal{B}$ and $b$ be as above.
Then $\beta\in\R$ is the smallest constant such that
$\mathcal{B} \le \beta$ for every reduced quantum model if and only if $\beta$ is the output of the \state polynomial optimization problem
\begin{equation}
\begin{aligned}
\sup\ & b  \\	
\rm{s.t.} 
\quad & x_{m,i,j}^2=x_{m,i,j},\ \sum_{j=1}^{b_m}x_{m,i,j}=1\,, \\
& x_{m,i,j}x_{m,i,j'}=0,\ \text{ for } j\neq j',\ [x_{m,i,j},x_{m',i',j'}]=0 \ \text{ for } m\neq m'\,, \\
& \sig(w_1\cdots w_\ell)=\sig(w_1)\cdots\sig(w_\ell) \ \text{ for } w_k\in\langle x_{m_k,i,j}:i,j\rangle \text{ where }m_k \text{ are distinct }\\
&\hspace{22.5em}\text{and not sharing sources.} \\
\end{aligned}
\end{equation}
\end{corollary}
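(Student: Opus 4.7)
The plan is to establish a canonical correspondence between feasible points of the stated state polynomial optimization problem and reduced quantum models for the network, under which the objective $b$ evaluates exactly to the polynomial expression $\mathcal{B}$ in the correlations. Once this dictionary is in place, the identification of $\beta$ with the smallest upper bound is immediate, and convergence of the associated SDP hierarchy follows by applying Corollary \ref{cor:pure_cvg} to $-b$.

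First, I would fix the balanced constraint set $C\subseteq\fatS$ consisting of all listed relations, written with $\pm$ to encode equalities: the projectivity conditions $\pm(x_{m,i,j}^2-x_{m,i,j})$, the completeness conditions $\pm(\sum_j x_{m,i,j}-1)$, the orthogonality conditions $\pm x_{m,i,j}x_{m,i,j'}$ for $j\neq j'$, the inter-party commutation relations $\pm[x_{m,i,j},x_{m',i',j'}]$ for $m\neq m'$, and the state factorization relations $\pm(\sig(w_1\cdots w_\ell)-\sig(w_1)\cdots\sig(w_\ell))$ for all admissible tuples $(w_1,\dots,w_\ell)$. A quick check shows $C$ is algebraically bounded: the PVM relations $x_{m,i,j}^2=x_{m,i,j}$ together with $\sum_j x_{m,i,j}=1$ algebraically produce $N-\sum_{m,i,j} x_{m,i,j}^2$ as a conic combination of elements of $C\cap\RX$ for any $N\ge$ the number of PVMs. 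Note also that $C\cap\fatS_d$ is finite for every $d$, so the formalism of Section \ref{sec:hierarchy} applies.

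Second, I would verify the two directions of the correspondence between $\cD_C^\infty$ and reduced quantum models. Given a reduced quantum model $(\cH,\{P_{m,i,j}\},\lambda)$, set $X_{m,i,j}:=P_{m,i,j}$: the defining axioms of a reduced quantum model directly give every operator constraint in $C$, and the factorization axiom \eqref{e:factor} gives every state constraint; moreover, each generator $\sig(x_{1,i_1,j_1}\cdots x_{M,i_M,j_M})$ of $b$ evaluates to the correlation $p(i_1,j_1,\dots,i_M,j_M)$, so $b(\lambda;\uX)=\mathcal{B}$. Conversely, given $(\lambda,\uX)\in\cD_C^\infty$, self-adjointness combined with $X_{m,i,j}^2=X_{m,i,j}$ makes $X_{m,i,j}$ an orthogonal projection; the remaining operator constraints turn $(X_{m,i,j})_j$ into a PVM for each fixed $(m,i)$ and impose the correct inter-party commutation, while the state constraints enforce \eqref{e:factor}. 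Thus $(\cH,\{X_{m,i,j}\},\lambda)$ is a reduced quantum model with the same correlations, and again $b(\lambda;\uX)=\mathcal{B}$.

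Combining the two directions, the supremum of $b$ over $\cD_C^\infty$ equals the supremum of $\mathcal{B}$ over reduced quantum models, which is exactly the smallest $\beta$ with $\mathcal{B}\le\beta$ universally. The SDP computability claim is obtained by invoking Corollary \ref{cor:pure_cvg} applied to $-b$ and $C$ (balanced and algebraically bounded), producing a monotone decreasing sequence of SDP upper bounds converging to $\beta$. The main subtlety I expect is the verification of algebraic boundedness of $C$ under the $\pm$-encoding of equality constraints, and the related bookkeeping ensuring that the GNS-type reconstruction of a reduced quantum model from a feasible point does not tacitly require finite-dimensional tensor-product Hilbert spaces; this is precisely why we work with reduced (rather than spatial) quantum models in the statement.
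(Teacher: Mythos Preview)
Your proposal is correct and follows the same (essentially tautological) route as the paper: the paper gives no explicit proof of Corollary~\ref{c:network}, since the listed constraints are nothing more than the defining data of a reduced quantum model, so the correspondence between $\cD_C^\infty$ and reduced quantum models is by definition. Your added verification of algebraic boundedness via $N-\sum_{m,i,j}x_{m,i,j}^2=\sum_{m,i}\big[(1-\sum_j x_{m,i,j})+\sum_j(x_{m,i,j}-x_{m,i,j}^2)\big]$ and your remark about the SDP hierarchy via Corollary~\ref{cor:pure_cvg} are exactly what the paper uses in the paragraph immediately following the statement.
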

Note that the constraints $x_{m,i,j}x_{m,i,j'}=0$ for $j\neq j'$ above are redundant, but convenient for reducing the size of SDPs.

In particular, Corollaries \ref{cor:pure_cvg} and \ref{c:network} together yield a convergent SDP hierarchy for optimization of Bell expressions over reduced quantum models of an arbitrary network scenario.
By Proposition \ref{p:nogap},
it is easy to see that in the special case of the bilocal scenario (see Section \ref{sec:biloc} below), this SDP hierarchy is equivalent to the one presented in \cite{pozas2019bounding}, and whose convergence was first proved in \cite{renou22}.
For a different convergent SDP hierarchy based on the quantum de Finetti theorem, see \cite{ligthart21}.

\subsection{Bilocal scenario}\label{sec:biloc}

In the \emph{bilocal} scenario, there are three parties and two sources, and the middle party shares a source with each of the other parties.
In the network notation above, 
$M=\{1,2,3\}$, $S=\{1,2\}$ and
$$1 \leftsquigarrow 1\rightsquigarrow 2 \leftsquigarrow 2 \rightsquigarrow 3.$$
Below we provide  bounds on reduced quantum violations of some Bell inequalities of classical models for this scenario. We restrict ourselves to 2-output scenarios; note that a two-output PVM $\{P,I-P\}$ can be equivalently given by a binary observable $A$ (namely, $A=2P-I$). Thus we describe measurements of the first (resp. second; resp. third) party in terms of binary observables $A_i$ (resp. $B_i$; resp. $C_i$), as in Section \ref{sec:bell}.

\subsubsection{Example}\label{exa4}

Suppose each of the parties has two inputs and two outputs.
In \cite{Chaves}, the following inequality for classical models of this bilocal scenario is given:
\begin{equation}\label{e:sqrt}
\sqrt{|J_1|}+\sqrt{|J_2|}\le 2
\end{equation}
where
$$J_1=\lambda\big((A_1+A_2)B_1(C_1+C_2)\big),\qquad 
J_2=\lambda\big((A_1-A_2)B_2(C_1-C_2)\big).$$
The inequality \eqref{e:sqrt} is equivalent to the four polynomial inequalities
\begin{equation}\label{e:chaves}
-\frac{1}{8}(J_1-\eta_1 J_2)^2+\eta_2(\eta_1 J_1+J_2)\le 2
\end{equation}
for $\eta_i\in\{-1,1\}$; see \cite{Chaves}.
Let us bound reduced quantum violations of \eqref{e:chaves} for $\eta_1=\eta_2=1$ (the other cases are similar).
Denote
$$
j_1=\sum_{i,j\in\{1,2\}} \sig(x_iy_1z_j), \quad 
j_2=\sum_{i,j\in\{1,2\}}(-1)^{i+j} \sig(x_iy_2z_j), \quad
b = -\frac{1}{8}(j_1-j_2)^2+(j_1+j_2).
$$
The maximal bilocal reduced quantum violation of \eqref{e:chaves} is then given by the optimization problem
\begin{equation}\label{e:opt_chaves}
\begin{split}
\sup\ b\ \text{ s.t. }
&\  x_i^2=y_i^2=z_i^2=1,\quad
[x_i,y_j]=[y_i,z_j]=[z_i,x_j]=0 
\text{ for all }i,j,\\
&\ \sig\big(w_1(x_1,x_2)w_2(z_1,z_2)\big)=\sig\big(w_1(x_1,x_2)\big)\sig\big(w_2(z_1,z_2)\big)
\text{ for all }w_1,w_2.
\end{split}
\end{equation}
The SDP \eqref{eq:pure_constr_primal} for $d=3$ returns $4$, which gives an upper bound for a bilocal reduced quantum violation of \eqref{e:chaves}. This bound is attained by a spatial quantum model with
\begin{align*}
&A_1=C_1=\begin{pmatrix}
1 & 0 \\ 0 & -1
\end{pmatrix},\qquad
A_2=C_2=\begin{pmatrix}
0 & 1 \\ 1 & 0
\end{pmatrix}, \\
&B_1=\frac12
\begin{pmatrix}
 1 & 1 \\
 1 & -1 
\end{pmatrix}\otimes
\begin{pmatrix}
 1 & 1 \\
 1 & -1 
\end{pmatrix},
\quad
B_2=\frac12
\begin{pmatrix}
 1 & -1 \\
 -1 & -1
 \end{pmatrix}\otimes
 \begin{pmatrix}
 1 & -1 \\
 -1 & -1
 \end{pmatrix},\\
&\rho_j=\dyad{\psi} \quad\text{for}\quad
\ket{\psi}=
\frac{1}{\sqrt{2}}\begin{pmatrix}
 1 \\
 0 \\
 0 \\
 1
\end{pmatrix}.
\end{align*}
Also, for this bilocal model one has $\sqrt{|J_1|}+\sqrt{|J_2|}
=2\sqrt{2}$.

\subsubsection{Example}\label{i3322}

In \cite[Appendix F]{RBBPBG}, the authors prove an analog of the $I_{3322}$ inequality for classical bilocal models
\begin{equation}\label{e:sqrt1}
\sqrt{|J_1|}+\sqrt{|J_2|}\le \sqrt{|L|}
\end{equation}
where
\begin{align*}
J_1&=\frac12\lambda\big((A_1+A_2+A_3+I)B_1(C_1+C_2)\big),\\ 
J_2&=\frac12\lambda\big((A_1+A_2-A_3+I)B_2(C_1-C_2)\big)
+\frac12\lambda\big((A_1-A_2)B_3(C_1-C_2)\big), \\
L&= 4+\lambda(A_1)+\lambda(A_2).
\end{align*}
The inequality \eqref{e:sqrt1} implies the polynomial inequality
\begin{equation}\label{e:i3322}
2(J_1J_2+J_1L+J_2L)-J_1^2-J_2^2-L^2\le 0.
\end{equation}
To find an upper bound for bilocal reduced quantum violations of \eqref{e:i3322}, 
we set up the optimization problem as in the previous example, and
the SDP \eqref{eq:pure_constr_primal} for $d=3$ returns $15.6705$.
With the current computational limitations, we do not know whether this is the least upper bound. Nonetheless, there certainly exist bilocal quantum violations of \eqref{e:i3322}. Concretely, for
$\alpha=1.947$ and $\beta=1.639$, the spatial quantum model
\begin{align*}
A_1&=\begin{pmatrix}
1 & 0 \\ 0 & -1
\end{pmatrix},
\quad A_2=\begin{pmatrix}
0 & 1 \\ 1 & 0
\end{pmatrix},\quad
A_3=\begin{pmatrix}
0 & -i \\ i & 0
\end{pmatrix},\quad C_1=A_1,\quad C_2=A_2,
\\
B_1&=(\sin\alpha A_2+\cos\alpha A_3)\otimes \frac{1}{\sqrt{2}}(A_1+A_2),\quad
B_2=(\sin\alpha A_2-\cos\alpha A_3)\otimes \frac{1}{\sqrt{2}}(A_1-A_2),
\\
B_3&=-A_1\otimes \frac{1}{\sqrt{2}}(A_1-A_2),\\
\rho_j&=\dyad{\psi_j}
\quad\text{for}\quad
\ket{\psi_1}=\frac{\sin\beta}{2}
\begin{pmatrix}
\sqrt{2} \\ -1 \\ 0 \\ -1
\end{pmatrix}
+\frac{\cos\beta}{2}
\begin{pmatrix}
0 \\ -1 \\ \sqrt{2} \\ 1
\end{pmatrix},
\quad
\ket{\psi_2}=\frac{1}{\sqrt{2}}\begin{pmatrix}
 0 \\ 1 \\ -1 \\ 0
\end{pmatrix}
\end{align*}
gives $2(J_1J_2+J_1L+J_2L)-J_1^2-J_2^2-L^2
= 13.3309$.

In the above examples, state polynomial optimization yields the same result both in the real and the complex framework (though complex SDP relaxations are larger). In the following example, the real and the complex frameworks yield slightly different results.

\subsubsection{Example}\label{exaZ0}

Suppose that each of the parties has three inputs and two outputs.
In \cite{tavakoli22}, an inequality for classical models is given:
$$\frac13 S-T \le 3+5Z$$
where
\begin{align*}
S&=\sum_{i\in\{1,2,3\} } \Big(\lambda(B_iC_i)-\lambda(A_iB_i)\Big) ,\\
T&=\sum_{\{i,j,k\}=\{1,2,3\} } \lambda(A_iB_jC_k), \\
Z&=\max\Big(
\{
|\lambda(A_i)|,|\lambda(B_i)|,|\lambda(C_i)|\colon i\in\{1,2,3\}\} \\
&\phantom{=\max\Big(x}\cup 
\{
|\lambda(A_iB_j)|,|\lambda(B_iC_j)|,|\lambda(A_iC_j)|\colon i\neq j\} \\
&\phantom{=\max\Big(x}\cup 
\{
|\lambda(A_iB_jC_k)|\colon |\{i,j,k\}|
\le2 \}
\Big).
\end{align*}
In particular, \cite{tavakoli22} focused on the inequality
\begin{equation}\label{exaZ0:eq}
    \frac13 S-T \le 3 \qquad \text{subject to }Z=0
\end{equation}
for classical models, and showed it admits a spatial quantum violation satisfying $Z=0$ and $\frac13 S-T=4$. 
To provide an upper bound for bilocal reduced quantum violations of \eqref{exaZ0:eq}, 
we set up the optimization problem as before. The SDP \eqref{eq:pure_constr_primal} for $d=3$ (more precisely, its reduction as in Section \ref{sec:ss} and Section \ref{sec:uni}) contains four PSD blocks with respective size $130, 105, 105, 105$ and $3018$ affine constraints. We obtain an upper bound $4.46613$ in 1.94s.
For $d=4$, the SDP \eqref{eq:pure_constr_primal} (more precisely, its reduction as in Section \ref{sec:ss} and Section \ref{sec:uni}) contains four PSD blocks with respective size $678$, $678$, $678$, $646$ and $64878$ affine constraints. We obtain an upper bound $4.37666$ for the reduced quantum violations of \eqref{exaZ0:eq} in 5346s. The corresponding Hankel matrix is not flat and so we cannot certify the optimality of this bound. The SDP \eqref{eq:pure_constr_primal} for $d=5$ contains four PSD blocks with respective size $3838$, $3838$, $3838$, $3739$ and $1\,352\,093$ affine constraints. With {\tt COSMO} as an SDP solver, we obtain the upper bound $4.36605$ in $566\,979$s.

Now we turn to the results obtained in the complex setting. For $d=3$, the SDP \eqref{e:cx_dual} contains four PSD blocks with respective size $178, 134, 134, 134$ and $7578$ affine constraints. We obtain an upper bound $4.46665$ in 20.1s. For $d=4$, the SDP \eqref{e:cx_dual} contains four PSD blocks with respective size $1012, 1012, 1012, 958$ and $210\,501$ affine constraints. We obtain an upper bound $4.37951$ in 47138s. For $d=5$, the SDP \eqref{e:cx_dual} contains four PSD blocks with respective size $6498, 6498, 6498, 6244$ and $5\,672\,003$ affine constraints, which is currently out of reach.

\subsubsection*{\bf Conflict of interest}

The authors declared that they have no conflict of interest.

\bibliographystyle{alpha}
\bibliography{statencpop}
\end{document}